\newcommand{\halmos}	{${ }^{ }$\hfill\rule{2mm}{2mm}}
\newcommand{\EQ}{\begin{eqnarray}}
\newcommand{\EN}{\end{eqnarray}}
\newcommand{\EQQ}{\begin{eqnarray*}}
\newcommand{\ENN}{\end{eqnarray*}}
\newcommand{\nnum}{\nonumber}
\title{Synchronization on Lie Groups: Coordination of Blind Agents \thanks{ This work was supported by the Australian Research Council.}}
\author{Farzin Taringoo \thanks{Department of Electrical and Electronic Engineering, The University of Melbourne, ftaringoo@unimelb.edu.au.}}
\begin{document}
\maketitle
\begin{abstract}                
This  paper presents an  algorithm for the synchronization of blind agents (agents are unable to observe other agents, i.e. no communication) evolving on  a connected Lie group $G$. We employ the method of extremum seeking control for  nonlinear dynamical systems defined on connected Riemannian manifolds to achieve the synchronization among the agents. This approach is independent of the underlying graph of the system and each agent updates its position on $G$ by only receiving the synchronization cost function. The results are obtained by employing the notion of geodesic dithers for extremum seeking on Riemannian manifolds and their equivalent version on Lie groups and applying Taylor expansion of smooth functions on Riemannian manifolds. We apply the obtained results to  synchronization problems defined on Lie groups $SO(3)$ and $SE(3)$ to demonstrate the efficacy of the proposed algorithm.
\end{abstract}

\begin{keywords} 
Synchronization, Riemannian Manifolds, Quotient Manifolds. 
\end{keywords}
 
\section{Introduction}
Synchronization is an important topic in analysis of multi-agent systems, see \cite{rah, sar,sar1, sar2, sca,sep, cor, jad,olf, are, law}. This problem may arise as the behavior of agents in nature. The synchronization problem has been extensively analyzed from control and optimization point of view, see \cite{jad,olf}. Various aspects such as optimality of configurations, collision avoidance and mean field stochastic games have been studied for this class of problems. 
Synchronization of agents  is closely related to the \textit{consensus} problem in which agents minimize the summation of their local objective functions, see \cite{jad,olf}. Depending on cost functions defined for the network of agents, a synchronization problem can be converted to a consensus problem, see \cite{sar1}. 

Many optimization methods have been extended to address synchronization and consensus problems, see \cite{jad, olf}. A key factor in the optimization methods developed for such problems is that each agent optimizes the cost function using its local variables or information, i.e. the optimization problem is in the category of decentralized optimization problems. Since synchronization cost functions depend on all agents state trajectories, a successful implementation of local optimization algorithms necessitates information exchange among the agents in the network, see \cite{sar,sar1,jad,olf,nai,law}. In this case convergence of optimization algorithms highly depend on the topology of the network.

 In this paper we employ a class of optimization methods (extremum seeking algorithms) which makes agents local optimizations independent of the state of other agents in the network. That is to say, each agent updates its current state only with respect to the monitored synchronization cost. In this setting, we accept the fact that all agents have full information about the total objective function defined for their synchronization. This problem falls within the class of cooperative team problems in which agents aim to optimize the aggregated cost function. Using the approach of this paper, agents local optimization is independent of the network topology which is one of the main contributions of this paper.

   We also consider a network of agents evolving on a connected Lie group. In this case, the convergence analysis of the proposed algorithm is obtained for a generic Riemannian metric which distinguishes our approach from the methods presented in \cite{sar, sar1, hans3}, where only the embedded Euclidean metrics were considered. The analysis presented in \cite{sar} and \cite{hans3}  is restricted to the ambient Euclidean spaces of Riemannian submanifolds. However, in general, embeddings of Riemannian manifolds may not be available (their existence is guaranteed by Nash Theorem) and the resulted Euclidean spaces may be very high dimensional. 
This makes the implementation of optimization  algorithms problematic and optimization algorithms on the main Riemannian manifolds might be more efficient in terms of computation burden, see \cite{ring}.

In terms of exposition,  Section II  presents some  mathematical preliminaries needed for the analysis of the paper and formulates the synchronization problem on Riemannian manifolds. Section III presents the extremum seeking problem for nonlinear dynamical systems on Riemannian manifolds and gives the analysis of extremum seeking systems for synchronization of agents on Riemannian manifolds. 

 \section{Preliminaries and problem formulation}

\newtheorem{definition}{Definition}
\begin{definition}[\hspace{-.01cm}\cite{Lee3}]
A Riemannian manifold $(M,g_{M})$ is a differentiable manifold $M$ together with a Riemannian metric $g_{M}$, where $g_{M}:T_{x}M\times T_{x}M\rightarrow \mathds{R}$ is symmetric and positive definite and $T_{x}M$ is the tangent space at $x\in M$ (see \cite{Lee2}, Chapter 3). For $M=\mathds{R}^{n}$, the Riemannian metric $g_{\mathds{R}^{n}}$ is given by 
 \EQ g_{\mathds{R}^{n}}\left(\frac{\partial}{\partial x_{i}},\frac{\partial}{\partial x_{j}}\right)=\delta_{ij},\hspace{.2cm}i,j=1,...,n,\nnum\EN
 where $\delta_{ij}$ is the Kronecker delta.\halmos 
\end{definition}
\begin{definition}[\hspace{-.01cm}\cite{Lee2}]
\label{d2}
For a given smooth mapping $F:M\rightarrow N$ from manifold $M$ to manifold $N$, the pushforward (differential) operator $TF$ is defined as a generalization of the Jacobian of smooth maps in Euclidean spaces as follows:
\EQ TF:TM\rightarrow TN, \EN
where
\EQ \label{puu}T_{x}F:T_{x}M\rightarrow T_{F(x)}N,\EN
and
\EQ T_{x}F(X_{x})\circ f=X_{x}(f\circ F),\hspace{.2cm}X_{x}\in T_{x}M, f\in C^{\infty}(N).\nnum\EN
\halmos\end{definition}
In this paper we present the final results for connected finite dimensional Lie groups which are manifolds equipped with smooth group operations. However, some parts of the analysis are presented for general Riemannian manifolds.
On an $n$ dimensional Riemannian manifold $M$, the length function of a smooth curve $\gamma:[a,b]\rightarrow M$ is defined as 
\EQ \ell(\gamma)=\int^{b}_{a}\big(g_{M}(\dot{\gamma}(t),\dot{\gamma}(t))\big)^{\frac{1}{2}}dt,\nnum\EN
where $g$ denotes the Riemannian metric on $M$.
The following theorem ensures that for any connected Riemannian manifold $M$, any pair of points $x,y\in M$ can be connected by a piecewise smooth path $\gamma$. \\
\newtheorem{theorem}{Theorem}
\begin{theorem}[\hspace{-.025cm}\cite{Lee3}, Page 94]
\label{t1}
Suppose $(M,g_{M})$ is an $n$ dimensional connected Riemannian manifold. Then, for any pair $x,y\in M$, there exists a piecewise smooth path which connects $x$ to $y$.\halmos
\end{theorem}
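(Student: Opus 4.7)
The plan is to prove Theorem \ref{t1} by the classical connectedness argument: define an equivalence relation on $M$ where $x \sim y$ iff $x$ and $y$ can be joined by a piecewise smooth path, and show that each equivalence class is both open and closed, so that connectedness of $M$ forces a single equivalence class.

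First I would verify that $\sim$ is indeed an equivalence relation. Reflexivity is trivial (the constant path). Symmetry follows by reversing the parameterization of a piecewise smooth curve, which remains piecewise smooth. Transitivity is obtained by concatenation: if $\gamma_1:[a,b]\to M$ joins $x$ to $y$ and $\gamma_2:[b,c]\to M$ joins $y$ to $z$, the concatenated curve is piecewise smooth even if smoothness fails at the joining point $b$, which is exactly what the piecewise notion allows.

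Next I would show that every equivalence class $[x]$ is open in $M$. Given any $p \in [x]$, choose a smooth chart $(U,\varphi)$ with $p \in U$ and $\varphi(U) \subset \mathds{R}^n$ an open ball (such a chart exists since $M$ is locally Euclidean). For any $q \in U$, the straight-line segment in $\varphi(U)$ from $\varphi(p)$ to $\varphi(q)$ is smooth and lies in $\varphi(U)$ by convexity; pulling back by $\varphi^{-1}$ yields a smooth curve in $M$ from $p$ to $q$. Concatenating with the piecewise smooth path from $x$ to $p$ shows $q \in [x]$, so $U \subset [x]$ and $[x]$ is open. The complement $M \setminus [x]$ is a union of other equivalence classes, each open by the same argument, so $[x]$ is also closed.

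Since $M$ is connected and $[x]$ is a nonempty clopen subset, $[x] = M$. Hence for arbitrary $x,y \in M$ we have $y \in [x]$, which is exactly the claim. The only real subtlety is ensuring that the local chart-based segments fit together into a genuinely piecewise smooth curve on $M$, but this is exactly why the statement is phrased in the piecewise smooth category rather than the fully smooth one, so no smoothing argument is required. I do not foresee any substantive obstacle beyond this bookkeeping.
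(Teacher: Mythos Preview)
Your proof is correct. Note that the paper does not actually supply its own proof of this theorem: it is stated with a citation to \cite{Lee3}, Page 94, and marked with the end-of-proof symbol, so it is treated as a quoted result from Lee's textbook. Your argument --- defining the equivalence relation of being joinable by a piecewise smooth curve, showing each class is open via coordinate balls, and invoking connectedness --- is precisely the standard proof given in Lee's book, so your approach coincides with the cited source.
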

Consequently  we can define a metric (distance) $d$ on an $n$ dimensional Riemannian manifold $(M,g_{M})$ as follows:
\EQ\label{l} &&d:M\times M\rightarrow \mathds{R},\hspace{.2cm}\nnum\\&& d(x,y)=\inf_{\gamma:[a,b]\rightarrow M}\int^{b}_{a}\big(g_{M}(\dot{\gamma}(t),\dot{\gamma}(t))\big)^{\frac{1}{2}}dt, \EN where $\gamma:[a,b]\rightarrow M$ is a piecewise smooth path and $\gamma(a)=x,\gamma(b)=y$.

Employing the distance function above it can be shown that $(M,d)$ is a metric space. This is formalized by the next theorem.\\
\begin{theorem}[\hspace{-.02cm}\cite{Lee3}, Page 94]
\label{t2}
With the distance function $d$ defined in (\ref{l}), any connected Riemannian manifold is a metric space where the induced topology is the same as the manifold topology.\halmos 
\end{theorem}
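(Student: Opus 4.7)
The plan is to verify the four metric axioms and then establish topological equivalence via a local comparison with a coordinate chart. Theorem \ref{t1} already guarantees that for any $x,y$ the set of piecewise smooth paths joining them is non-empty, so $d(x,y)$ is a well-defined element of $[0,\infty)$ (the infimum is bounded below by $0$ since $g_{M}$ is positive definite). Symmetry is immediate by reparametrizing a path via $t\mapsto a+b-t$, which reverses orientation without altering the integrand. The triangle inequality follows by concatenating an arbitrary piecewise smooth path from $x$ to $y$ with one from $y$ to $z$, forming a piecewise smooth path from $x$ to $z$ whose length is the sum of the two, and then taking infima.

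The non-trivial axiom is positive definiteness: $d(x,y)=0\Rightarrow x=y$. I would argue locally. Pick a coordinate chart $(U,\varphi)$ centered at $x$ with $\varphi(x)=0$, and use the chart to identify $T_{z}M$ with $\mathds{R}^{n}$ and the components $g_{ij}(z)$ of $g_{M}$. A linear change of basis arranges $g_{ij}(x)=\delta_{ij}$, and by continuity of $g_{ij}$ I can shrink $U$ so that there exist constants $0<c_{1}\le c_{2}$ with
\begin{equation}
c_{1}\|v\|^{2}\le g_{M,z}(v,v)\le c_{2}\|v\|^{2},\quad z\in U,\ v\in T_{z}M,\nonumber
\end{equation}
where $\|\cdot\|$ is the Euclidean norm in $\varphi(U)\subset\mathds{R}^{n}$. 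Choose $\rho>0$ with the closed Euclidean ball $\bar B_{\rho}\subset\varphi(U)$. For any piecewise smooth path $\gamma$ joining $x$ to $y\neq x$, either $\gamma$ stays inside $\varphi^{-1}(B_{\rho})$, in which case $\ell(\gamma)\ge c_{1}^{1/2}\|\varphi(y)\|$, or it exits through $\varphi^{-1}(\partial B_{\rho})$, in which case $\ell(\gamma)\ge c_{1}^{1/2}\rho$. Taking infima, $d(x,y)\ge c_{1}^{1/2}\min(\|\varphi(y)\|,\rho)>0$, establishing positive definiteness.

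The same two-sided bound yields the topology statement: combining the lower bound above with the upper bound $d(x,y)\le c_{2}^{1/2}\|\varphi(y)\|$ (obtained by integrating along the straight Euclidean segment pulled back to $M$) shows that near $x$ the Riemannian distance is sandwiched between constant multiples of the Euclidean chart distance. Consequently every $d$-ball around $x$ contains a chart-open neighborhood of $x$, and vice versa, so the two topologies have the same local bases and therefore coincide.

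The main obstacle is the positive-definiteness step: one must handle paths that leave the chart, since the infimum in (\ref{l}) ranges over \emph{all} piecewise smooth paths, not merely those contained in $U$. The dichotomy above, separating paths that remain in the Euclidean ball $B_{\rho}$ from those that cross its boundary, is the key device that turns the purely local bound on the metric tensor into a global lower bound on $d(x,y)$.
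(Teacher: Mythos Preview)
Your argument is correct and is essentially the standard proof one finds in the cited reference. Note, however, that the paper itself does not supply a proof of this theorem: it is stated with a citation to \cite{Lee3}, Page 94, and marked with the end-of-statement symbol, so there is no in-paper argument to compare against. Your development---verifying the easy metric axioms, then obtaining positive definiteness and the topology comparison from a two-sided eigenvalue bound $c_{1}\|v\|^{2}\le g_{M,z}(v,v)\le c_{2}\|v\|^{2}$ on a precompact coordinate ball, together with the dichotomy between paths that remain in the ball and those that cross its boundary---is exactly the textbook route. One cosmetic point: the concluding inequality $d(x,y)\ge c_{1}^{1/2}\min(\|\varphi(y)\|,\rho)$ is literally well-posed only when $y\in\varphi^{-1}(B_{\rho})$; when $y$ lies outside that ball (or outside $U$ altogether) the first branch of the dichotomy is vacuous and you simply get $d(x,y)\ge c_{1}^{1/2}\rho$, which is what you need.
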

The \textit{Levi-Civita} connection $\nabla:\mathfrak{X}(M)\times \mathfrak{X}(M)\rightarrow\mathfrak{X}(M)$ is the unique linear connection on $M$ (see \cite{Lee3}, Theorem 5.4)  which is torsion free and compatible with the Riemannian metric $g_{M}$ as follows ($\mathfrak{X}(M)$ is the space of smooth vector fields on $M$):  
\EQ \label{kir3}&&\mbox{compatibility with}\hspace{.2cm} g_{M}\nnum\\&&Xg_{M}(Y,Z)=g_{M}(\nabla_{X}Y,Z)+g_{M}(Y,\nabla_{X}Z),\EN
\EQ  \label{levi} \hspace{-.5cm}&&(i)(\mbox{torsion free}): \nabla_{X}Y-\nabla_{Y}X=[X,Y],\hspace{.5cm}\nnum\\\hspace{-.2cm}&&(ii): \nabla_{X}f=X(f),\EN
where $X,Y,Z\in \mathfrak{X}(M)$.

\subsection{Dynamical systems on Riemannian manifolds}
This paper focuses on dynamical systems governed by differential equations. Locally these differential equations are expressed by (see \cite{Lee2})
\EQ &&\hspace{-0cm}\dot{x}(t)=f(x(t),t),\hspace{.2cm} \nnum\\&&\hspace{-0cm}f(x(t),t)\in T_{x(t)}M,\hspace{.2cm} x(0)=x_{0}\in M, t\in[t_{0},t_{f}].\nnum\EN
The time dependent flow associated with a differentiable time dependent vector field $f$ is a map $\Phi_{f}$ satisfying :
\EQ \label{flow} &&\Phi_{f}:[t_0, t_{f}]\times [t_{0}, t_{f}]\times M\rightarrow M, \nnum\\&& (t_{0},s,x)\hookrightarrow \Phi_{f}(s,t_{0},x)\in M,\nnum\EN
and
\EQ \frac{d\Phi_{f}(s,t_{0},x)}{ds}|_{s=t}=f(\Phi_{f}(t,t_{0},x),t).\nnum\EN 
One may show, for a smooth vector field $f$, the integral flow $\Phi_{f}(s,t_{0},.):M\rightarrow M$ is a local diffeomorphism , see \cite{Lee2}.
  Here we assume that the vector field $f$ is smooth and \textit{complete}, i.e. $\Phi_{f}$ exists for all $t\in (t_{0},\infty)$.
    \subsection{Geodesic Curves}
   As known (see \cite{jost}), geodesics are defined as length minimizing curves on Riemannian manifolds which satisfy
   \EQ \nabla_{\dot{\gamma}(t)}\dot{\gamma}(t)=0,\nnum\EN
   where $\gamma(\cdot)$ is a geodesic curve on $(M,g_{M})$.
 \begin{definition}[\hspace{-.01cm}\cite{Lee3}]
 \label{def1}
 The restricted exponential map is defined by 
 \EQ \exp_{x}:T_{x}M\rightarrow M,\hspace{.2cm}\exp_{x}(v)=\gamma_{v}(1), v\in T_{x}M,\nnum\EN
 where $\gamma_{v}(1)$ is the geodesic initiating from $x$ with the velocity $v$ up to $t=1$.
 \halmos\end{definition}
 For brevity, in this paper we refer the restricted exponential maps as exponential maps. 
 For $x\in M$, consider a $\delta$ ball in $T_{x}M$ such that $B_{\delta}(0)\doteq\{v\in T_{x}M|\hspace{.2cm}||v||_{g}\doteq g_{M}(v,v)^{\frac{1}{2}}<\delta\}$. Then the geodesic ball is defined as follows.

 \begin{definition}[\hspace{-.01cm}\cite{Lee3}]
 In a neighborhood of  $x\in M$ where $\exp_{x}$ is a local diffeomorphism (this neighborhood always exists by Lemma \ref{eun} below), a geodesic ball of radius $\delta>0$ is denoted by $\exp_{x}(B_{\delta}(0))\subset M$. Also we call $\exp_{x}(\overline{B}_{\delta}(0))$ a closed geodesic ball of radius $\delta$. 
 \halmos\end{definition}
 \newtheorem{lemma}{Lemma}
 \begin{lemma}[\hspace{-.01cm}\cite{Lee3}]
 \label{eun}
 For any $x\in M$ there exists a neighborhood $B_{\delta}(0)$ in $T_{x}M$ on which $\exp_{x}$ is a diffeomorphism onto $\exp_{x}(B_{\delta}(0))\subset M$. 
 \halmos\end{lemma}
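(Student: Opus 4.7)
The plan is to apply the inverse function theorem to $\exp_x$ at the origin of $T_xM$, after establishing that the exponential is defined on some open neighborhood of $0$ and computing its differential there.

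First I would verify that $\exp_x$ is defined on some open ball $B_{\delta}(0)\subset T_xM$. Written in any smooth chart around $x$, the geodesic equation $\nabla_{\dot{\gamma}}\dot{\gamma}=0$ becomes a second-order ODE that is smooth in the initial data $(x,v)$. Local existence and uniqueness for ODEs gives an open set $U\subset T_xM$ containing $0$ such that $\gamma_v(t)$ is defined for all $t\in[0,1]$ and $v\in U$, so $\exp_x(v)=\gamma_v(1)$ makes sense there. Combined with the rescaling property of geodesics, $\gamma_{cv}(t)=\gamma_v(ct)$ for $c>0$, we may shrink $U$ to a genuine ball $B_{\delta}(0)$ with respect to the norm induced by $g_M$ on $T_xM$.

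Second I would compute the differential $T_0\exp_x:T_0(T_xM)\to T_xM$ using the canonical identification $T_0(T_xM)\cong T_xM$ valid for any finite-dimensional vector space. For $v\in T_xM$, the curve $\sigma:t\mapsto tv$ in $T_xM$ satisfies $\sigma(0)=0$ and $\dot{\sigma}(0)=v$ under this identification. By the rescaling property,
\begin{equation}
\exp_x(tv)=\gamma_{tv}(1)=\gamma_v(t),
\end{equation}
so
\begin{equation}
T_0\exp_x(v)=\left.\frac{d}{dt}\right|_{t=0}\exp_x(tv)=\dot{\gamma}_v(0)=v.
\end{equation}
Hence $T_0\exp_x$ is the identity map and in particular is a linear isomorphism.

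Third, since $\exp_x$ is smooth in a neighborhood of $0$ (smoothness in initial conditions for the geodesic ODE) and its differential at $0$ is an isomorphism, the inverse function theorem on manifolds yields an open neighborhood of $0$ in $T_xM$ on which $\exp_x$ is a diffeomorphism onto its image. Shrinking this neighborhood to a metric ball $B_{\delta}(0)$ with $\delta>0$ sufficiently small (using that the $g_M$-balls form a neighborhood basis of $0$) gives the claimed geodesic ball $\exp_x(B_{\delta}(0))\subset M$. The only subtle step is the identification $T_0(T_xM)\cong T_xM$ used to write $T_0\exp_x=\mathrm{Id}$; once that is made precise, everything else is a direct invocation of ODE theory and the inverse function theorem.
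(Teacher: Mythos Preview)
Your argument is correct and is exactly the standard proof: local existence and smoothness of geodesics give a domain for $\exp_x$, the rescaling identity $\gamma_{tv}(1)=\gamma_v(t)$ shows $T_0\exp_x=\mathrm{Id}$, and the inverse function theorem finishes. The paper does not supply its own proof of this lemma; it simply cites \cite{Lee3}, where the argument is precisely the one you have written, so your proposal matches the intended reference.
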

 \begin{definition}[\hspace{-.01cm}\cite{Lee3}]
 A normal neighborhood around $x\in M$ is any open neighborhood  of $x$ which is a diffeomorphic image of a star shaped neighborhood of $0\in T_{x}M$ under $\exp_{x}$ map.
 \halmos\end{definition}

 \begin{definition}\label{inj}The injectivity radius of $M$ is  
 \EQ i(M)\doteq \inf_{x\in M}i(x),\nnum\EN
 where
\EQ&&\hspace{-.8cm}i(x) \doteq \sup\{ r\in\mathds{R}_{\ge0}| \exp_x \mbox{is differmorphic onto} \exp_x B_r(0)\}.\nnum\\\EN
\halmos\end{definition}  

The following lemma displays a relationship between normal neighborhoods and metric balls defined before on $M$.
\begin{lemma}[\hspace{-.01cm}\cite{Pet}]
\label{lpp}
If $\exp_{x}(\cdot),\hspace{.2cm}x\in M$, is a local diffeomorphism on $B_{\epsilon}(0)\subset T_{x}M,\hspace{.2cm}\epsilon\in \mathds{R}_{>0}$, and $B(x,r)\subset \exp_{x}B_{\epsilon}(0)$, then 
\EQ \exp_{x}B_{r}(0)=B(x,r),\nnum\EN
where $B(x,r)$ is the metric ball with respect to the Riemannian distance function. 
\hspace*{7.5cm}\halmos\end{lemma}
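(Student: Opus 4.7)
The plan is to establish the set equality by proving the two inclusions separately, the key ingredient being the Gauss lemma, which guarantees that radial geodesics emanating from $x$ are the unique length-minimizing curves within a normal neighborhood. Since $\exp_x$ is a diffeomorphism on $B_{\epsilon}(0)$ and $B(x,r)\subset \exp_x B_{\epsilon}(0)$, the entire metric ball sits inside a normal neighborhood of $x$, which is precisely the setting in which the Gauss lemma applies.

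For the inclusion $\exp_x B_r(0)\subseteq B(x,r)$, I would take any $v\in B_r(0)$, i.e. $\|v\|_{g}<r$, and consider the radial geodesic $\gamma_v(t)=\exp_x(tv)$, $t\in[0,1]$. A direct computation from the definition of $\ell(\gamma)$ gives $\ell(\gamma_v)=\|v\|_g$ because $\dot\gamma_v(0)=v$ and geodesics have constant speed. Therefore $d(x,\exp_x(v))\le \|v\|_{g}<r$, placing $\exp_x(v)$ in $B(x,r)$. This direction does not use the hypothesis $B(x,r)\subset \exp_x B_{\epsilon}(0)$ beyond requiring $r\le \epsilon$.

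For the reverse inclusion $B(x,r)\subseteq \exp_x B_r(0)$, take $y\in B(x,r)$. By the hypothesis $B(x,r)\subset \exp_x B_{\epsilon}(0)$ and the fact that $\exp_x$ is a diffeomorphism on $B_{\epsilon}(0)$, there is a unique $v\in B_{\epsilon}(0)$ with $y=\exp_x(v)$. To show $v\in B_r(0)$, I would invoke the Gauss lemma to argue that the radial geodesic from $x$ to $y$ realizes the Riemannian distance: any other piecewise smooth curve from $x$ to $y$ either stays inside the normal neighborhood, in which case a standard polar-coordinate computation shows its length is at least $\|v\|_g$, or it exits $\exp_x B_{\epsilon}(0)$ at some intermediate time, in which case its length already exceeds the distance from $x$ to the exit sphere, which is at least $\|v\|_g$. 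Consequently $d(x,y)=\|v\|_g$, and the assumption $y\in B(x,r)$ forces $\|v\|_g<r$, i.e.\ $v\in B_r(0)$ and $y\in \exp_x B_r(0)$.

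The main obstacle is the second inclusion, specifically the case in which a competitor curve from $x$ to $y$ leaves the normal neighborhood. Handling this cleanly requires decomposing the curve at the first exit time and combining the Gauss-lemma length estimate inside $\exp_x B_\epsilon(0)$ with a triangle-inequality argument on the boundary sphere; this is the step where the hypothesis $B(x,r)\subset \exp_x B_\epsilon(0)$ is actually used, since it guarantees the exit radius exceeds $r$ and therefore exceeds $\|v\|_g$.
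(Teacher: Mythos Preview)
The paper does not prove this lemma; it is quoted from \cite{Pet} with no argument supplied, so there is nothing to compare against. Your approach via the Gauss lemma is the standard textbook argument and is essentially correct.

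One small point deserves tightening. In your final paragraph you write that the hypothesis $B(x,r)\subset \exp_x B_\epsilon(0)$ ``guarantees the exit radius exceeds $r$ and therefore exceeds $\|v\|_g$.'' But at that stage you have not yet established $\|v\|_g<r$---that is precisely the conclusion you are after---so the chain ``exit radius $>r\ge\|v\|_g$'' is circular. The correct (and simpler) bound does not go through $r$ at all: a curve that leaves $\exp_x B_\epsilon(0)$ must, for every $\rho<\epsilon$, cross the geodesic sphere $\exp_x\partial B_\rho(0)$, and the Gauss-lemma estimate already gives its length $\ge\rho$; letting $\rho\uparrow\epsilon$ yields length $\ge\epsilon>\|v\|_g$, the last inequality coming directly from $v\in B_\epsilon(0)$. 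Thus the hypothesis $B(x,r)\subset\exp_x B_\epsilon(0)$ is used only to write $y=\exp_x(v)$ with $v\in B_\epsilon(0)$, not in the exit-case length bound. With this correction your argument is complete.
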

We note that $B_{\epsilon}(0)$ is the metric ball of radius $\epsilon$ with respect to the Riemannian metric $g_{M}$ in $T_{x}M$.

The following lemma bounds the injectivity radius of compact Riemannian manifolds, see Definition \ref{inj}.
 
\begin{lemma}[\hspace{-.01cm}\cite{Klin}]
\label{kl}
The injectivity radius $i(x), x\in M$ is continuous with respect to $x$ and is bounded from below for compact Riemannian manifolds.
\halmos\end{lemma}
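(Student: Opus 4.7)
The statement contains two assertions, but the lower bound on compact $M$ is immediate once continuity is in hand. Indeed Lemma~\ref{eun} guarantees $i(x)>0$ at every single point $x\in M$, so if $i:M\to\mathds{R}_{>0}$ is continuous, the extreme value theorem applied on the compact metric space $M$ (a metric space by Theorem~\ref{t2}) produces a minimizer $x_{*}\in M$ with $\inf_{x\in M}i(x)=i(x_{*})>0$. All the work therefore goes into proving continuity.

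For continuity at $x_{0}\in M$, I would prove upper and lower semi-continuity separately along an arbitrary sequence $x_{n}\to x_{0}$. The essential device is a way of identifying the tangent spaces $T_{x_{n}}M$ with $T_{x_{0}}M$ for $n$ large; for this I would use parallel transport $P_{n}:T_{x_{0}}M\to T_{x_{n}}M$ along the unique short minimizing geodesic joining $x_{0}$ to $x_{n}$, which exists and is unique for large $n$ by Lemma~\ref{eun}. The composed maps $\exp_{x_{n}}\!\circ P_{n}:T_{x_{0}}M\to M$ then live on the common domain $T_{x_{0}}M$ and, by smooth dependence of the geodesic ODE on its initial conditions (equivalently, on the basepoint), converge to $\exp_{x_{0}}$ in $C^{k}$ uniformly on every compact subset of $T_{x_{0}}M$ as $n\to\infty$.

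For upper semi-continuity, if $i(x_{0})=r_{0}$ then witnesses of failure appear in $B_{r_{0}+\epsilon}(0)\subset T_{x_{0}}M$ for every $\epsilon>0$: either a singular point of $T\exp_{x_{0}}$ (a conjugate point) or a pair $v\ne w$ with $\exp_{x_{0}}(v)=\exp_{x_{0}}(w)$. Transporting these witnesses through $P_{n}$ and invoking the $C^{1}$-convergence above yields analogous witnesses in $T_{x_{n}}M$ for $n$ large, so $\limsup i(x_{n})\le r_{0}+\epsilon$ for every $\epsilon>0$. For lower semi-continuity, fix $\rho<r_{0}$; then $\exp_{x_{0}}$ is a diffeomorphism on the compact ball $\overline{B}_{\rho}(0)$ onto a geodesic ball of $M$. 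The goal is to show $\exp_{x_{n}}$ is a diffeomorphism on $\overline{B}_{\rho}(0)\subset T_{x_{n}}M$ for $n$ large, which combined with Lemma~\ref{lpp} gives $i(x_{n})\ge\rho$. The \emph{main obstacle}, and the step I would have to handle most carefully, is this lower semi-continuity: I must simultaneously rule out new conjugate points inside $B_{\rho}(0)$ (control of $T\exp_{x_{n}}$) and rule out new pairs of coincident preimages on $\overline{B}_{\rho}(0)$ (global injectivity). Both are $C^{1}$-open conditions on the compact domain $\overline{B}_{\rho}(0)$: invertibility of $T\exp_{x_{0}}$ on $\overline{B}_{\rho}(0)$ persists under $C^{1}$-small perturbations, and global injectivity of a map that is a diffeomorphism on a slightly larger ball persists under $C^{0}$-small perturbations by a standard covering/compactness argument (two sequences of would-be coincident preimages would subconverge to a coincidence of $\exp_{x_{0}}$ inside $\overline{B}_{\rho}(0)$, contradicting $\rho<r_{0}$). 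The uniform $C^{1}$-convergence $\exp_{x_{n}}\!\circ P_{n}\to\exp_{x_{0}}$ supplies both perturbation estimates. Combining the two semi-continuities gives continuity of $i$, and the first paragraph then closes the proof.
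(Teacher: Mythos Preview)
The paper does not prove this lemma at all; it is stated with a citation to Klingenberg and closed with \halmos, so there is no argument in the paper to compare against. Your proposal therefore supplies what the paper simply imports.

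Your overall plan is the standard one, and the lower semi-continuity half is sound: invertibility of $T\exp_{x_{0}}$ and global injectivity on the compact ball $\overline{B}_{\rho}(0)$ are both $C^{1}$-open conditions, and your subsequence argument for injectivity is the right one. The deduction of the positive lower bound on compact $M$ from continuity via the extreme value theorem is also correct.

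The upper semi-continuity step, however, has a gap, and it is precisely the step you flag as the \emph{easier} one. You claim that a witness of failure in $B_{r_{0}+\epsilon}(0)$---either a conjugate point or a coincidence pair $v\ne w$---``transported through $P_{n}$ and invoking the $C^{1}$-convergence above yields analogous witnesses in $T_{x_{n}}M$.'' For the coincidence case this can be made to work provided $\exp_{x_{0}}$ is a local diffeomorphism at both $v$ and $w$: local inverses then exist and depend continuously on the base point, so the coincidence persists nearby. But for the conjugate-point case the claim is false as stated: singularity of $T_{v}\exp_{x_{0}}$ is a \emph{closed} condition, not an open one, and a $C^{1}$-small perturbation can remove it entirely (e.g.\ $t\mapsto t^{3}$ perturbed to $t\mapsto t^{3}+\epsilon t$). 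The usual repair is either (i) to invoke continuity of the first conjugate time along a geodesic, which follows from continuous dependence of solutions of the Jacobi equation on its coefficients, or (ii) to use that beyond a conjugate point the geodesic ceases to minimize, which manufactures a genuine coincidence pair $v'\ne w'$ slightly further out and reduces case~(a) to case~(b). Either route closes the gap, but neither is the bare $C^{1}$-convergence you invoke.
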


By the results of \cite{Pet}, Corollary 5.3, and Lemma \ref{kl}, in the case $i(M)>0$, for any $r\leq i(M),$ such that $B(x,r)\subset \exp_{x}B_{i(M)}(0)$, we have
\EQ B(x,r)=\exp_{x}B_{r}(0).\nnum\EN
\section{Synchronization on Riemannian manifolds and Lie groups}
Let us consider a set of $m$ agents $\mathcal{A}\doteq \{1,\cdots, m\}$, on a connected  $n$ dimensional Riemannian manifold $(M,g_{M})$ where the state of each agent lies on $M$, i.e.  $x_{i}\in M,\hspace{.2cm}i=1,\cdots m$. The synchronization for $\mathcal{A}$ is met when $x_{1}=x_{2}=\cdots x_{m}\in M$, see \cite{sar1}. For the network of agents $\mathcal{A}$, an undirected graph $\mathcal{G}(\mathcal{V},\mathcal{E})$ has a finite set of vertices $\mathcal{V}$ and a set of unordered edges $\mathcal{E}$. A link which connects vertices $i$ and $j$ is denoted by $(i,j)\in \mathcal{E}$. Corresponding to  $x_{i},\hspace{.2cm}i=1,\cdots,m$, a cost function to penalize the deviation from the synchronized configuration is proposed in \cite{sar1} as  
\EQ \label{cost}J(x_{1},\cdots,x_{m})=\frac{1}{2}\sum_{(i,j)\in \mathcal{E}}d^{2}(x_{i},x_{j}),\EN
where $d$ is the Riemannian metric on $(M,g_{M})$. As is obvious the unique global minimum of $J$ is given by $x_{1}=x_{2}=\cdots x_{m}$, i.e. at the synchronization state.   The optimization problem defined in (\ref{cost}), is a special case of the optimization of a cost function $J:M\times M\cdots \times M\rightarrow \mathds{R}_{\geq 0}$, defined on the Riemannian manifold $M\times M\cdots \times M$. In the case that the graph $\mathcal{G}$ is fully connected the cost function (\ref{cost}) changes to $J(x_{1},\cdots,x_{m})=\frac{1}{2}\sum^{m}_{i=1}\sum^{m}_{j=1,j\ne i}d^{2}(x_{i},x_{j})$. As an example in the case $M=SO(n)$ and $d$ as the Frobenius metric, we have \cite{sar1}
\EQ \label{cost1}J(x_{1},\cdots,x_{m})&=&\frac{1}{2}\sum_{(i,j)\in \mathcal{E}}tr\big((x_{i}-x_{j})^{T}(x_{i}-x_{j})\big)\nnum\\&=&\sum_{(i,j)\in \mathcal{E}}(n-tr(x^{T}_{i}x_{j})),\EN
where $x_{i}\in SO(n)$. 
One of the most popular optimization algorithms for minimization(maximization) of $J$ is the gradient descent method. The decentralized version of the gradient method for each agent is given by
\EQ \dot{x}_{i}=-grad_{x_{i}}J,\nnum\EN
where $dJ_{i}(X)=g(grad_{x_{i}},X)$ for all $X\in T_{x}M$. Note that $dJ_{i}:T_{x}M\rightarrow \mathds{R}$, is the differential form of $J$ with respect to the state of agent $i$. For the cost function $J=\sum_{(i,j)\in \mathcal{E}}(n-tr(x^{T}_{i}x_{j}))$ on $SO(n)^{m}$, we have
\EQ\label{gr} \dot{x}_{i}=\frac{1}{2}x_{i}\sum_{j:(i,j)\in \mathcal{E}}(x^{T}_{i}x_{j}-x^{T}_{j}x_{i}),\hspace{.2cm}i=1,\cdots,m.\EN
Obviously, in order to implement (\ref{gr}), agent $i$ has to know the state $x_{j}$ of all the agents $j$, where $(i,j)\in \mathcal{E}$. That necessitates  communication or information exchange among agents  in decentralized algorithms, see \cite{jad, olf}.

In this paper we present the final results on a connected Lie group $G$ (for the definition of Lie groups see \cite{varad}).  Let us denote a Lie group  $(G,\star)$ as the state configuration manifold for all agents. Note that $\star$ is the group operation of $G$. 
We recall that the Lie algebra $\mathcal{L}$ of a Lie group $G$ (see \cite{Lewis},\cite{ varad}) is the tangent space at the identity element $e$ with the associated Lie bracket defined on the tangent space of $G$, i.e.  $\mathcal{L}=T_{e}G$. 
A vector field $X$ on $G$ is \textit{left invariant } if
\EQ \forall g_{1},g_{2}\in G,\quad X(g_{1}\star g_{2})=T_{g_{2}}g_{1}X(g_{2}),\nnum\EN
where $g\star:G\rightarrow G,\hspace{.2cm}g\star(h)=g\star h,\hspace{.2cm}, T_{g_{2}}g:T_{g_{2}}G\rightarrow T_{g\star g_{2}}G$. That immediately implies $X(g\star e)=X(g)=T_{e}L_{g}X(e)$.
For a left invariant vector field $X$, we define the \textit{exponential map} as
\EQ \label{kirkir1}\exp:\mathcal{L}\rightarrow G,\quad \exp(tX(e)):=\Phi(t,X),t\in \mathds{R},\EN

where $\Phi(t,X)$ is the integral flow of $\dot{g}(t)=X(g(t))$ with the boundary condition $g(0)=e$. Note that $\exp(tX(e))$ is not necessarily the same as the geodesic $\exp $ map defined in Definition \ref{def1}.

The synchronization cost defined in (\ref{cost}) has a critical set of points denoted by $G_{c}$, which minimizes (\ref{cost}) , where $g\in G_{c}$ implies that $J(g_{c})=0$. Motivated by the cost function  (\ref{cost}), the synchronization critical set is given by
\EQ \label{gc}G_{c}\doteq \{(g,\cdots,g)\,|\,g\in G\}.\EN
It is imediate that $G_{c}\subset G\times \cdots\times G$. 
The set (\ref{gc}) characterizes all the possibilities of synchronization for the agents in the network. The following lemma shows that $G_{c}$ is a Lie group in the topology induced by $G^{m}\doteq G\times \cdots\times G$. We denote the Lie algebra corresponding to $G^{m}$ by $\mathcal{L}^{m}$.

\begin{lemma}
\label{l1}
The critical set (\ref{gc}) is a Lie subgroup of the Lie group $G^{m}$.
\end{lemma}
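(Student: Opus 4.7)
The plan is to exhibit $G_c$ as the image of the diagonal embedding of $G$ into $G^m$, and then either verify directly that this image is an embedded submanifold and a subgroup, or appeal to Cartan's closed subgroup theorem. Concretely, define $\Delta:G\to G^m$ by $\Delta(g)=(g,\ldots,g)$; this map is smooth because each component is the identity, it is injective, and its image is exactly $G_c$. So the whole proof reduces to showing (i) $G_c$ is an algebraic subgroup of $G^m$ and (ii) $G_c$ carries a smooth submanifold structure compatible with the product structure on $G^m$.

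First I would verify the subgroup axioms in $G^m$. Since the identity of $G^m$ is $(e,\ldots,e)\in G_c$, and componentwise the group operation on $G^m$ gives $(g,\ldots,g)\star(h,\ldots,h)=(g\star h,\ldots,g\star h)\in G_c$ and $(g,\ldots,g)^{-1}=(g^{-1},\ldots,g^{-1})\in G_c$, closure under products and inverses is immediate. Equivalently, $\Delta$ is a group homomorphism and $G_c=\Delta(G)$.

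Next I would establish the smooth/topological structure. The cleanest route is to show $G_c$ is closed in $G^m$ and invoke the closed subgroup theorem: since $G$ is Hausdorff, the diagonal $\Delta_G\subset G\times G$ is closed, and
\begin{equation}
G_c=\bigcap_{i=1}^{m-1}\pi_{i,i+1}^{-1}(\Delta_G),\nonumber
\end{equation}
where $\pi_{i,i+1}:G^m\to G\times G$ projects onto factors $i$ and $i+1$. Each $\pi_{i,i+1}$ is continuous, so $G_c$ is a finite intersection of closed sets in $G^m$. By Cartan's theorem, any closed subgroup of a Lie group is an embedded Lie subgroup, which concludes the argument. Alternatively, one can avoid Cartan's theorem by computing the differential $T_e\Delta:T_eG\to T_eG\oplus\cdots\oplus T_eG$, which sends $v$ to $(v,\ldots,v)$, observing this is injective, so $\Delta$ is an injective immersion and in fact an embedding because it is a proper map onto its closed image; the subgroup structure then transports along $\Delta$.

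There is no real obstacle — the argument is essentially bookkeeping. The only item to be slightly careful about is the convention for "Lie subgroup": if the paper means an embedded Lie subgroup, the closedness argument above is required; if it admits immersed subgroups, the diagonal immersion suffices on its own. In either case both properties hold, so the conclusion follows, and moreover $G_c$ is canonically isomorphic as a Lie group to $G$ via $\Delta$, with Lie algebra equal to the diagonal of $\mathcal{L}^m=\mathcal{L}\oplus\cdots\oplus\mathcal{L}$.
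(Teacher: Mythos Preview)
Your proposal is correct and follows essentially the same route as the paper: verify that $G_c$ is a subgroup of $G^m$, establish that it is closed, and then invoke Cartan's closed subgroup theorem. Your closedness argument via $G_c=\bigcap_{i}\pi_{i,i+1}^{-1}(\Delta_G)$ is more explicit than the paper's sequential convergence claim, and your alternative immersion argument is a nice addition, but the overall strategy coincides with the paper's.
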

\begin{proof}
First we note that $G^{m}$ is a Lie group. This is immediate by the group structure of $G$. In order to prove $G_{c}$ is a Lie subgroup we have to show that $G_{c}$ is a closed subgroup of $G^{m}$. Obviously $G_{c}$ is not empty. For any  $g_{1},g_{2}\in G_{c}$ we have $g_{1}\overline{\star} g_{2}^{-1}\in G_{c}$, where $\overline{\star}$ is the group operation inherited from $G$ on $G^{m}$. This proves that $G_{c}$ is a subgroup of $G^{m}$. In order to show $G_{c}$ is a Lie group it remains to prove the closeness of $G_{c}$ in the topology of $G^{m}$. This is also immediate since based on the structure of $G_{c}$, any converging sequence $g_{n}\rightarrow g^{*},\hspace{.2cm}g_{n}\in G_{c}$ implies that $g^{*}\in G_{c}$, which yields the closeness of $G_{c}$ in the topology of $G^{m}$. By applying Cartan's Lemma \cite{varad}, $G_{c}$ is a Lie subgroup and consequently a Lie group.    
\end{proof}

The following lemma gives a Riemannian structure on the Lie group $G^{m}$ based on inner products in $\mathcal{L}^{m}$.
\begin{lemma}[\hspace{-.01cm}\cite{Lewis}]
An inner product $\mathds{I}:\mathcal{L}^{m}\times \mathcal{L}^{m}\rightarrow \mathds{R}$, induces a left invariant Riemannian metric on $G^{m}$ as 
\EQ &&g_{G^{m}}(v_{g},w_{g})=\mathds{I}(T_{g}g^{-1}(v_{g}),T_{g}g^{-1}(w_{g})),\nnum\\&&\hspace{.2cm}v_{g},w_{g}\in T_{g}G^{m},\nnum\EN
where $T_{g}g^{-1}(v_{g})\in \mathcal{L}^{m}$. Furthermore, any left invariant Riemannian metric $g_{G^{m}}$ is identified via left translation by its value $g_{g^{m}}(e^{m})$, where $e^{m}$ is the identity element of $G^{m}$.

\halmos\end{lemma}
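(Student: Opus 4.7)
The plan is to verify in turn the three attributes that promote the formula to a Riemannian metric (pointwise bilinearity and positive-definiteness, smoothness in the basepoint, and left-invariance), and then to invert the construction to prove the identification assertion.

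First I would fix $g\in G^{m}$ and observe that the map $T_{g}g^{-1}=T_{g}L_{g^{-1}}:T_{g}G^{m}\to \mathcal{L}^{m}$ is a linear isomorphism, because $L_{g^{-1}}:G^{m}\to G^{m}$ is a diffeomorphism with inverse $L_{g}$. Consequently $g_{G^{m}}|_{g}$ inherits from $\mathds{I}$ the three defining properties of an inner product: bilinearity (from linearity of $T_{g}L_{g^{-1}}$ and bilinearity of $\mathds{I}$), symmetry (from symmetry of $\mathds{I}$), and positive-definiteness (since $T_{g}L_{g^{-1}}$ is an isomorphism, so it annihilates only the zero vector). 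Smoothness of $g\mapsto g_{G^{m}}|_{g}(v_{g},w_{g})$ for smooth vector fields $v,w$ on $G^{m}$ follows from smoothness of the group operations, which imply smoothness of the assignment $g\mapsto T_{g}L_{g^{-1}}$.

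The key computation is left-invariance. For any $h\in G^{m}$ I would apply the chain rule to the equality $L_{(h\star g)^{-1}}\circ L_{h}=L_{g^{-1}\star h^{-1}}\circ L_{h}=L_{g^{-1}}$, obtaining
\begin{equation*}
T_{h\star g}L_{(h\star g)^{-1}}\circ T_{g}L_{h}\;=\;T_{g}L_{g^{-1}}.
\end{equation*}
Applied to $v_{g},w_{g}\in T_{g}G^{m}$ this gives
\begin{equation*}
g_{G^{m}}|_{h\star g}(T_{g}L_{h}v_{g},T_{g}L_{h}w_{g})=\mathds{I}(T_{g}L_{g^{-1}}v_{g},T_{g}L_{g^{-1}}w_{g})=g_{G^{m}}|_{g}(v_{g},w_{g}),
\end{equation*}
which is exactly the statement that every left translation $L_{h}$ is an isometry, i.e. the metric is left-invariant.

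For the second sentence I would reverse the construction. Given an arbitrary left-invariant Riemannian metric $\tilde g_{G^{m}}$, restrict it to $T_{e^{m}}G^{m}=\mathcal{L}^{m}$ to obtain an inner product $\tilde{\mathds{I}}\doteq \tilde g_{G^{m}}|_{e^{m}}$. Using left-invariance with $h=g^{-1}$ in the identity above, one reads off
\begin{equation*}
\tilde g_{G^{m}}|_{g}(v_{g},w_{g})=\tilde g_{G^{m}}|_{e^{m}}(T_{g}L_{g^{-1}}v_{g},T_{g}L_{g^{-1}}w_{g})=\tilde{\mathds{I}}(T_{g}g^{-1}(v_{g}),T_{g}g^{-1}(w_{g})),
\end{equation*}
so $\tilde g_{G^{m}}$ coincides with the metric built from $\tilde{\mathds{I}}$ by the formula of the lemma, proving the claimed one-to-one correspondence. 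I expect no genuine obstacle here: the only point to be careful about is the bookkeeping of left translations versus pushforwards, specifically verifying the chain-rule identity for $L_{(h\star g)^{-1}}\circ L_{h}$, which is where the group structure is used.
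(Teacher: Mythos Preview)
Your argument is correct and is the standard proof of this classical fact. Note, however, that the paper does not supply its own proof of this lemma: it is stated with a citation to \cite{Lewis} and closed with a \halmos, so there is nothing in the paper to compare your proof against. Your write-up would serve perfectly well as the omitted proof.
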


Note that $Tg^{-1}$ is the pushforward of the smooth map $g^{-1}\overline{\star}:G^{m}\rightarrow G^{m}$.

In order to analyze the extremum seeking algorithm in the next section for the synchronization cost (\ref{cost}), we need to use the notion of \textit{Quotient Manifolds} as follows.
	As shown by Lemma \ref{l1}, the critical set $G_{c}$ is a Lie subgroup of $G^{m}$. This implies the existence of a left (right) group action from $G_{c}$ to $G^{m}$ by 
	\EQ G_{c}\times G^{m}\rightarrow G^{m},\nnum\EN
	where $g_{1}\overline{\star}g_{2}\in G^{m}$ for $g_{1}\in G_{c}$ and $g_{2}\in G^{m}$. By employing the left action above, we introduce an equivalent class induced by $G_{c}$ on $G^{m}$ as $ g_{1}\sim g_{2}, g_{1},g_{2}\in G^{m}$, if there exists $\hat{g}\in G_{c}$, such that $\hat{g}\overline{\star}g_{1}=g_{2}$. This induces a projection $\pi:G^{m}\rightarrow G^{m}/G_{c}$ by $\pi(g)=[g]$, where $[\cdot]$ is the equivalent class operator and $G^{m}/G_{c}$ is the quotient space. In this paper we denote $[g]\subset G^{m}$ and $\pi	(g)\in G^{m}/G_{c}$. 
	Note that in general, quotient spaces are not manifolds and may not be even Housdorff spaces, see \cite{Lee2}.  Since by Lemma \ref{l1} $G_{c}$ is a Lie group, the following result shows that the quotient space $G^{m}/G_{c}$ is a smooth manifold.
	\begin{lemma}[\hspace{-.01cm}Theorem 21.17 in \cite{Lee2}, edition 2012]
	\label{l2}
	The quotient space $G^{m}/G_{c}$ has a smooth manifold structure and $\pi:G^{m}\rightarrow G^{m}/G_{c}$ is a smooth submersion.
 	\end{lemma}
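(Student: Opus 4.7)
The plan is to apply the cited Theorem 21.17 of \cite{Lee2} directly, with the ambient Lie group being $G^{m}$ and the closed subgroup being $G_{c}$. That theorem---the standard homogeneous space construction---asserts that whenever $H$ is a closed Lie subgroup of a Lie group $\mathcal{G}$, the coset space $\mathcal{G}/H$ carries a unique smooth manifold structure for which the natural projection $\mathcal{G}\rightarrow\mathcal{G}/H$ is a smooth submersion. So the task reduces to checking that the setup here fits the hypotheses of that theorem and then reading off the conclusion.

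The hypotheses are easy to verify. First, $G^{m}$ is itself a Lie group, since finite products of Lie groups are Lie groups in the product manifold structure with the componentwise group operation $\overline{\star}$ making multiplication and inversion smooth. Second, Lemma \ref{l1} has already established that $G_{c}$ is a closed Lie subgroup of $G^{m}$. Third, the equivalence relation defined above on $G^{m}$---$g_{1}\sim g_{2}$ iff $\hat{g}\,\overline{\star}\,g_{1}=g_{2}$ for some $\hat{g}\in G_{c}$---partitions $G^{m}$ into the cosets $G_{c}\,\overline{\star}\,g$, which are exactly the orbits handled by the cited theorem. Applying the theorem then yields both conclusions simultaneously: $G^{m}/G_{c}$ inherits a smooth manifold structure, and $\pi:G^{m}\to G^{m}/G_{c}$ is a smooth submersion.

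Since the proof is by direct citation, there is no real obstacle; the only bookkeeping point worth noting is that the theorem is sometimes stated for left cosets $gH$ whereas the equivalence above produces right cosets $G_{c}\,\overline{\star}\,g$, but the two situations are symmetric. If one preferred a more self-contained route, one could instead invoke the general Quotient Manifold Theorem by showing that the translation action of $G_{c}$ on $G^{m}$ is smooth (immediate from smoothness of $\overline{\star}$), free (immediate from group cancellation, as $\hat{g}\,\overline{\star}\,g=g$ forces $\hat{g}$ to be the identity of $G^{m}$), and proper (a standard consequence of $G_{c}$ being closed in $G^{m}$, since the orbit map factors through the closed embedding $G_{c}\hookrightarrow G^{m}$). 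Either route reaches the same conclusion without additional calculation.
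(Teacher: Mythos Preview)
Your proposal is correct and matches the paper's approach: the paper states this lemma as a direct citation of Theorem 21.17 in \cite{Lee2} without providing its own proof, and your verification that $G^{m}$ is a Lie group with $G_{c}$ a closed Lie subgroup (via Lemma \ref{l1}) is exactly the hypothesis-checking that the citation implicitly requires. Your additional remarks on the left/right coset symmetry and the alternative route through the Quotient Manifold Theorem are sound but go beyond what the paper itself supplies.
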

	
	\begin{definition}
	A smooth function $J:G^{m}\rightarrow \mathds{R}$ is invariant with respect to $G_{c}$ if 
	\EQ J(g_{c}\overline{\star}g)=J(g),\hspace{.2cm}g_{c}\in G_{c},g\in G^{m}.\nnum\EN
	\halmos\end{definition}
	As an example consider $(G^{m},\overline{\star})=(\mathds{R}^{m},+)$. Hence, $G_{c}=\bigcup_{r\in\mathds{R}}(r,\cdots,r)\subset \mathds{R}^{m}$. The Euclidean distance function $||\cdot||^{2}_{\mathds{R}^{m}}$ is invariant with respect to $G_{c}$ since
	\EQ ||r_{1}-r_{2}||^{2}_{\mathds{R}^{m}}=||r_{1}+r-r_{2}-r||^{2}_{\mathds{R}^{m}}.\nnum\EN
	Another example is the cost function (\ref{cost1}) which is invariant with respect to its corresponding $G_{c}$ since $tr(x^{T}_{i}Q^{T}Qx_{j})=tr(x^{T}_{i}x_{j})$ for $Q\in SO(n)$.
	\begin{definition}
	\label{def2}
	On the Lie group $G^{m}$, a vector field $X\in \mathfrak{X}(G^{m})$ is invariant with respect to $G_{c}$, if
	\EQ X(g_{c}\overline{\star}g)=T_{g}g_{c}(X(g)),\hspace{.2cm}g_{c}\in G_{c},g\in G^{m},\nnum\EN
	where $Tg_{c}$ is the pushforward of the smooth group operation  $g_{c}\overline{\star}:G^{m}\rightarrow G^{m}$. 
	\halmos\end{definition}
	We note that vector fields on the base manifold $G^{m}$ do not necessarily induce vector fields on $G^{m}/G_{c}$. This is due to the fact that if $X(g_{1})\ne X(g_{2}),\hspace{.2cm}g_{1},g_{2}\in [g]$, where $X\in \mathfrak{X}(G^{m})$, then $T_{g_{1}}\pi(X(g_{1}))\in T_{\pi(g_{1})}G^{m}/G_{c}$ is not necessarily identical to $T_{g_{2}}\pi(X(g_{2}))\in T_{\pi(g_{2})}G^{m}/G_{c}$. However, in the case that $X$ is invariant with respect to $G_{c}$, $T\pi(X)$ induces a vector field on $G^{m}/G_{c}$. This follows as
	\EQ\label{kir} \pi\circ g_{c}\overline{\star}g=\pi\circ g,\hspace{.2cm} g_{c}\in G_{c},g\in G^{m}.\EN
	Hence, the smoothness of $\pi$ (Lemma \ref{l2}), implies
	\EQ T_{g_{c}\overline{\star}g}\pi\circ T_{g}g_{c}(X(g))&=&T_{ g_{c} \overline{\star}g}\pi\circ X(g_{c}\overline{\star}g)\nnum\\&=&T_{g}\pi \circ X(g),\nnum\EN
	where the first equality is by Definition \ref{def2} and the second equality is given by (\ref{kir}).
	This implies that both $X(g_{c}\overline{\star}g)$ and $X(g)$ induce the same tangent vector at $\pi(g)\in G^{m}/G_{c}$.

	Parallel to the construction of vector fields on $G^{m}/G_{c}$, we can assign a Riemannian metric to $G^{m}/G_{c}$. 
	It is important to note that the structure of the Riemannian metric of the base manifold $G^{m}$ stipulates the structure of the Riemannian metric in $G^{m}/G_{c}$. Following the results of \cite{abs}, chapter 3, for any tangent vector $v_{g}\in T_{g}G^{m}/G_{c}$ and for any $\hat{g}\in\pi^{-1}(g)$, there exist  tangent vectors $\hat{v}_{\hat{g}}\in T_{\hat{g}}G^{m}$  such that
	\EQ\label{hl} T_{\hat{g}}\pi(\hat{v}_{\hat{g}})=v_{g}\in T_{g}G^{m}/G_{c}.\EN

	In order to define a Riemannian metric on $G^{m}/G_{c}$, we need to employ the Riemannian metric of $G^{m}$ and apply that to horizontal lifts (see \cite{abs}) of tangent vectors at $TG^{m}/G_{c}$. However, for $\hat{g}_{1},\hat{g}_{2}\in \pi^{-1}(g),\hspace{.2cm}\hat{v}_{\hat{g}_{1}},\hat{w}_{\hat{g}_{1}}\in T_{\hat{g}_{1}}G^{m}, \hat{v}_{\hat{g}_{2}},\hat{w}_{\hat{g}_{2}}\in T_{\hat{g}_{2}}G^{m}$, it is not guaranteed that $g_{G^{m}}(\hat{v}_{\hat{g}_{1}},\hat{w}_{\hat{g}_{1}})=g_{G^{m}}(\hat{v}_{\hat{g}_{2}},\hat{w}_{\hat{g}_{2}})$, whereas $T_{\hat{g}_{1}}\pi(\hat{v}_{\hat{g}_{1}})=T_{\hat{g}_{2}}\pi(\hat{v}_{\hat{g}_{2}})$ and $T_{\hat{g}_{1}}\pi(\hat{w}_{\hat{g}_{1}})=T_{\hat{g}_{2}}\pi(\hat{w}_{\hat{g}_{2}})$.
	The following lemma shows that in the case that the Riemannian metric of the base manifold $G^{m}$ comes from an inner product on its Lie algebra then we can define an unambiguous Riemmanina metric on $G^{m}/G_{c}$ with respect to $g_{G^{m}}$.
	\begin{lemma}
	\label{l3}
	Consider the Lie group $G^{m}$ with a Riemannian metric corresponding to an inner product $\mathds{I}:\mathcal{L}^{m}\times \mathcal{L}^{m}\rightarrow \mathds{R}$. Then, for all $\hat{g}_{1},\hat{g}_{2}\in [g],\hspace{.2cm}\hat{v}_{\hat{g}_{1}},\hat{w}_{\hat{g}_{1}}\in T_{\hat{g}_{1}}G^{m}, \hat{v}_{\hat{g}_{2}},\hat{w}_{\hat{g}_{2}}\in T_{\hat{g}_{2}}G^{m}$, where $T_{\hat{g}_{1}}\pi(\hat{v}_{\hat{g}_{1}})=T_{\hat{g}_{2}}\pi(\hat{v}_{\hat{g}_{2}})$ and $T_{\hat{g}_{1}}\pi(\hat{w}_{\hat{g}_{1}})=T_{\hat{g}_{2}}\pi(\hat{w}_{\hat{g}_{2}})$, we have
	\EQ g_{G^{m}}(\hat{v}_{\hat{g}_{1}},\hat{w}_{\hat{g}_{1}})=g_{G^{m}}(\hat{v}_{\hat{g}_{2}},\hat{w}_{\hat{g}_{2}}).\nnum\EN
	\end{lemma}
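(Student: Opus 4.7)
The plan is to realize $\hat g_1$ and $\hat g_2$ as related by a self-isometry of $G^m$ that permutes the fibres of $\pi$, so that horizontal lifts at the two basepoints are related by that isometry and their pairwise inner products coincide. Since $\hat g_1,\hat g_2\in[g]$, the equivalence relation gives some $g_c\in G_c$ with $\hat g_2=g_c\,\overline\star\,\hat g_1$; let $L_{g_c}:G^m\to G^m$, $h\mapsto g_c\overline\star h$, denote left translation. The strategy is to establish three facts about $L_{g_c}$: (i) it is an isometry of $g_{G^m}$; (ii) it preserves the vertical, and hence horizontal, distribution induced by $\pi$; (iii) on horizontal lifts it agrees with the lifting map itself.

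For (i), I would use the defining formula $g_{G^m}(v,w)=\mathds{I}(T_h h^{-1}(v),T_h h^{-1}(w))$ together with $(g_c\overline\star h)^{-1}=h^{-1}\overline\star g_c^{-1}$ to show, via functoriality of the pushforward, that $T_{h'}(h')^{-1}\circ T_hL_{g_c}=T_hh^{-1}$ as maps into $\mathcal L^m$, where $h'=g_c\overline\star h$; this is exactly the computation that uses that $g_{G^m}$ is manufactured from an inner product on $\mathcal L^m$ by left translation. For (ii), the identity $\pi\circ L_{g_c}=\pi$ (which holds because $g_c\overline\star h\sim h$, as already recorded in (\ref{kir})) differentiates to $T_{\hat g_2}\pi\circ T_{\hat g_1}L_{g_c}=T_{\hat g_1}\pi$, so $TL_{g_c}$ maps $\ker T\pi$ into $\ker T\pi$; combined with (i) and the orthogonal-complement characterization of the horizontal distribution, $TL_{g_c}$ also carries horizontal vectors at $\hat g_1$ to horizontal vectors at $\hat g_2$.

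For (iii), reading $\hat v_{\hat g_i},\hat w_{\hat g_i}$ as the horizontal lifts implicit in the paragraph preceding the lemma (see \cite{abs}), the vectors $T_{\hat g_1}L_{g_c}(\hat v_{\hat g_1})$ and $T_{\hat g_1}L_{g_c}(\hat w_{\hat g_1})$ are horizontal at $\hat g_2$ and satisfy $T_{\hat g_2}\pi\circ T_{\hat g_1}L_{g_c}=T_{\hat g_1}\pi$, so they project to the same vectors in $T_{\pi(g)}G^m/G_c$ as $\hat v_{\hat g_2}$ and $\hat w_{\hat g_2}$. Uniqueness of horizontal lifts (the restriction of $T\pi$ to the horizontal subspace is a linear isomorphism onto $T_{\pi(g)}G^m/G_c$) forces $T_{\hat g_1}L_{g_c}(\hat v_{\hat g_1})=\hat v_{\hat g_2}$ and similarly for $\hat w$, and substituting into the isometry identity from (i) yields the desired equality of metrics.

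The principal obstacle is the horizontality hypothesis: for genuinely arbitrary lifts the conclusion fails, because the vertical components can vary freely across the fibre without changing the projection under $T\pi$. Thus the subtle step is item (ii), verifying that left translation by $g_c$ not only preserves fibres setwise but also respects the orthogonal splitting $T G^m=\mathrm{ver}\oplus\mathrm{hor}$ determined by $g_{G^m}$; this is precisely where left invariance of the metric, i.e.\ the assumption that $g_{G^m}$ descends from $\mathds I$ on $\mathcal L^m$, is indispensable, and where a generic Riemannian metric on $G^m$ would not suffice.
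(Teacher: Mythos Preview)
Your proposal is correct and follows the same core idea as the paper: both arguments hinge on the fact that left translation $L_{g_c}$ by an element $g_c\in G_c$ is an isometry of the left-invariant metric $g_{G^m}$, established via the chain-rule identity $T_{\hat g_2}\hat g_2^{-1}\circ T_{\hat g_1}L_{g_c}=T_{\hat g_1}\hat g_1^{-1}$. The paper's proof, however, stops after verifying this isometry property and declares the argument complete, implicitly treating $\hat v_{\hat g_2}$ as $T_{\hat g_1}L_{g_c}(\hat v_{\hat g_1})$ without justification. Your steps (ii) and (iii) supply precisely what the paper omits: you use $\pi\circ L_{g_c}=\pi$ to show $TL_{g_c}$ preserves the vertical (hence horizontal) distribution, and then invoke uniqueness of horizontal lifts to identify $T_{\hat g_1}L_{g_c}(\hat v_{\hat g_1})$ with $\hat v_{\hat g_2}$. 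You also flag, correctly, that the lemma as literally stated (for arbitrary lifts satisfying only the $T\pi$-projection condition) is false, and that horizontality must be read in from the surrounding discussion of horizontal lifts; the paper's own use of the lemma immediately afterward confirms this is the intended reading. In short, same route, but your version closes a genuine gap in the paper's argument.
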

	\begin{proof}
	We need to show that $G_{c}$ acts as a group of isometries with respect to the Riemannian metric induced by $\mathds{I}$. Since $\hat{g}_{1},\hat{g}_{2}\in [g]$, there exists $g_{c}\in G_{c}$ such that $\hat{g}_{2}=g_{c}\overline{\star}g_{1}$. Then it is sufficient to show
	
	\EQ &&\hspace{-1cm}\mathds{I}(T_{\hat{g}_{1}}\hat{g}^{-1}_{1}(\hat{v}_{\hat{g}_{1}}),T_{\hat{g}_{1}}\hat{g}^{-1}_{1}(\hat{w}_{\hat{g}_{1}}))=\nnum\\&&\hspace{-1cm}\mathds{I}(T_{g_{c}\overline{\star}\hat{g}_{1}}(g_{c}\overline{\star}\hat{g}_{1})^{-1}T_{\hat{g}_{1}}g_{c}(\hat{v}_{\hat{g}_{1}}),T_{g_{c}\overline{\star}\hat{g}_{1}}(g_{c}\overline{\star}\hat{g}_{1})^{-1}T_{\hat{g}_{1}}g_{c}(\hat{w}_{\hat{g}_{1}})).\nnum\\\EN
	Hence, the statement holds if  
	\EQ T_{\hat{g}_{1}}\hat{g}^{-1}_{1}(\hat{v}_{\hat{g}_{1}})=T_{g_{c}\overline{\star}\hat{g}_{1}}(g_{c}\overline{\star}\hat{g}_{1})^{-1}T_{\hat{g}_{1}}g_{c}(\hat{v}_{\hat{g}_{1}}),\nnum\EN
	which is trivial since $\hat{g}^{-1}_{1}\overline{\star}g^{-1}_{c}\overline{\star} g_{c}\overline{\star}\hat{g}_{1}=\hat{g}^{-1}_{1}\overline{\star}\hat{g}_{1}.$
	The statment above proves that $G_{c}$ acts isometrically on $G^{m}$ and the proof is complete.
	\end{proof}
	By employing the results of Lemma \ref{l3} we define the induced Riemannian metric on the quotient manifold $G^{m}/G_{c}$ as follows.
	For tangent vectors $v_{g},w_{g}\in T_{g}G^{m}/G_{c}$ define $g_{G^{m}/G_{c}}(v_{g},w_{g})\doteq g_{G^{m}}(\hat{v}_{[g]},\hat{w}_{[g]}),$
	where $\hat{v}_{[g]},\hat{w}_{[g]}$ are the unique horizontally lifted tangent vectors in $T_{[g]}G^{m}$ corresponding to $v_{g}$ and $w_{g}$, see \cite{abs}. Note that $[g]$ is a subset of $G^{m}$ and not necessarily a single point in $G^{m}$. However, with no further confusion we accept the notation $T_{[g]}G^{m}$ as the tangent space of all elements in $[g]$ on $G^{m}$. 
	
	Any smooth invariant function $J:G^{m}\rightarrow \mathds{R}$ induces a smooth function $\hat{J}:G^{m}/G_{c}\rightarrow \mathds{R}$ such that $J=\hat{J}\circ \pi$.
	The horizontal lift of the gradient of $\hat{J}$ in $G^{m}/G_{c}$ is the gradient of $J$ in $G^{m}$. That is to say $\overset{H}{grad}_{g}\hat{J}=grad_{[g]}J,\hspace{.2cm}g\in G^{m}/G_{c},$
	where $\overset{H}{\cdot}$ gives the horizontal lift of the tangent vector $grad_{g} \hat{J}\in T_{g}G^{m}/G_{c}$. For detailed discussion on the horizontal lift and the equivalence of the gradients in the base manifold and the quotient manifold see \cite{abs}, chapter 3. 
	
\section{Extremum seeking algorithm for synchronization}
An extremum seeking closed loop is shown in Figure \ref{12}. This is the simplest form of the extremum seeking algorithm to minimize/maximize a scalar function $J:\mathds{R}\rightarrow \mathds{R}$. 
\begin{figure}
\begin{center}
\vspace{0cm}
\hspace*{-1cm}
\includegraphics[scale=.3]{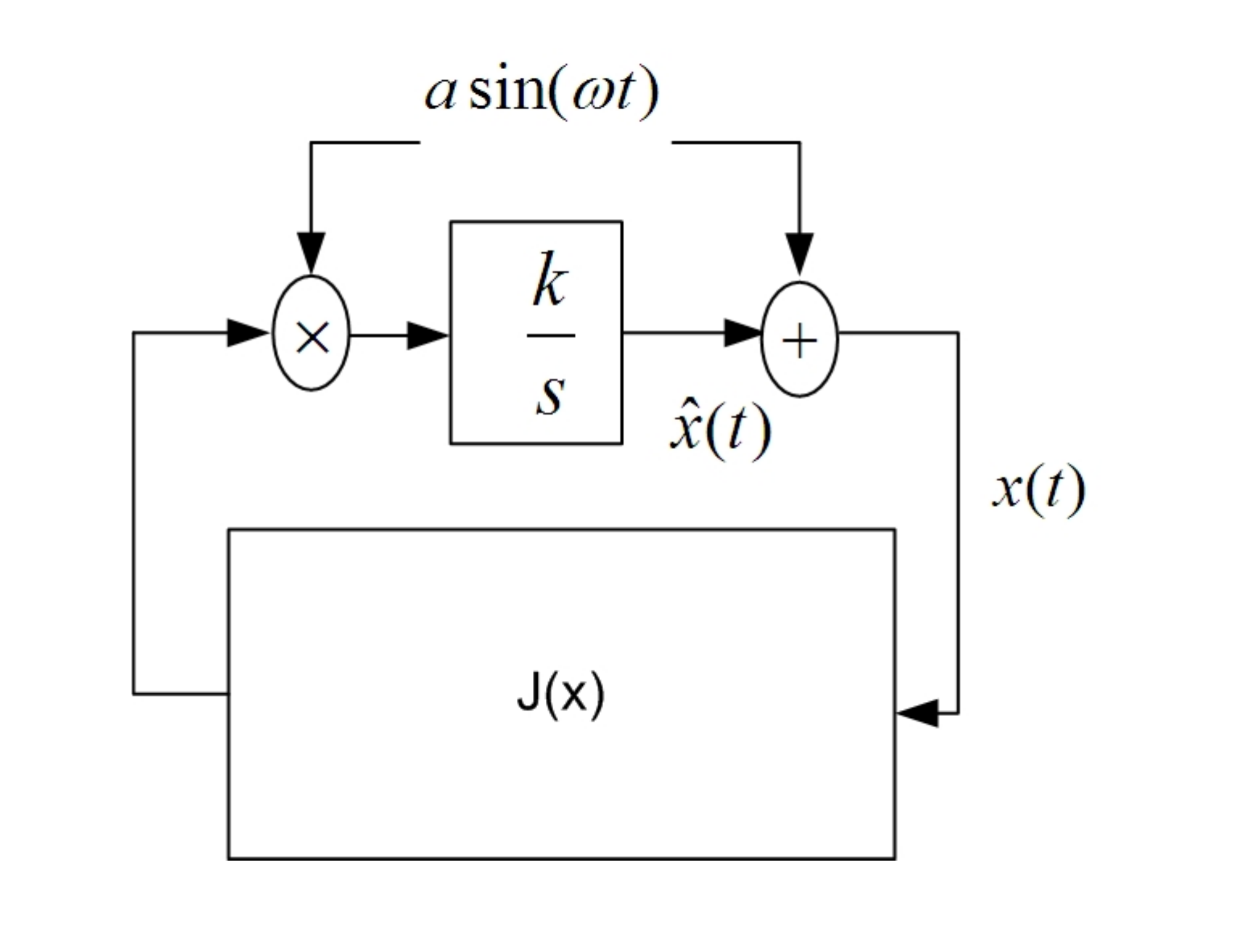}
 \caption{  Extremum seeking closed loop system }
      \label{12}
      \end{center}
   \end{figure} 
   The dither signal $a\sin(\omega t)$ provides a variation of the searching signal $\hat{x}(t)$ in the one dimensional space $\mathds{R}$. 
  The output of the integrator is $\hat{x}$, where $x=\hat{x}+a\sin(\omega t)$. The dynamical equations in $\hat{x}$ coordinates are given by
  \EQ\label{kooni} \dot{\hat{x}}(t)&=&k a\sin(\omega t)J(\hat{x}(t)+a\sin(\omega t)),\EN
	where without loss of generality we assume $k=-1$.
  \subsection{Averaging of the synchronization vector field} 
	
	As known \cite{tan}, on average, the dynamical system (\ref{kooni}) behaves as a gradient algorithm. Under technical assumptions for the dither signal, it is guaranteed that the state trajectory $\hat{x}$ converges to a neighborhood of an optimizer point of $J$, see \cite{krs1,tan}. This neighborhood is shrunken by adjusting the magnitude and the frequency of the dither signal, see \cite{krs1, tan}. 
	
	Consider an $n$ dimensional Riemannian manifold $(M,g_{M})$. For any $x\in M$, we consider the following local time-varying perturbation 
	(\textit{geodesic dither})
  \EQ \label{gd}x_{p}(t)=\exp_{x}\sum^{n}_{i=1}a_{i}\sin(\omega_{i}t)\frac{\partial}{\partial x_{i}},\hspace{.2cm}0<a_{i},\EN 
  where $\frac{\partial}{\partial x_{i}},\hspace{.2cm}i=1,\cdots,n$, are the basis for the tangent space at $x$. As defined  before, $\exp_{x}v,\hspace{.2cm}v\in T_{x}M$ is a geodesic emanating from $x\in M$ with velocity $v$. In this case we perturbed different coordinates on $M$ with different frequencies $\omega_{i},\hspace{.2cm}i=1,\cdots,n$.
	The optimization of $J$ on $M$ is carried out by the state trajectory of the following time-varying vector field on $M$.
	\EQ \label{kk}&&\hspace{-.5cm}f(\hat{x},t)\doteq - \sum^{n}_{i=1}a_{i}\sin(\omega_{i}t)J(\exp_{\hat{x}}\sum^{n}_{i=1}a_{i}\sin(\omega_{i}t)\frac{\partial}{\partial x_{i}})\frac{\partial}{\partial x_{i}},\nnum\\&&\hspace{-.5cm}f(\hat{x},t)\in T_{\hat{x}}M,\hspace{.2cm}i=1,\cdots,n,\EN
  where the optimizing trajectory $\hat{x}(\cdot)$ is a solution of the time dependent differential equation $\dot{\hat{x}}(t)=f(\hat{x},t)\in T_{\hat{x}}M$. Note that the magnitudes $a_{i},\hspace{.2cm}i=1,\cdots,n,$ are selected such that $||\sum^{n}_{i=1}a_{i}\sin(\omega_{i}t)\frac{\partial}{\partial x_{i}}||_{g}<i(M)$. 
	
	This algorithm is generalized for the synchronization problem defined above. In this case each agent updates its state via
	\EQ \label{grg}&&\hspace{-.5cm}\dot{\hat{x}}_{j}=-\sum^{n}_{i=1}a^{j}_{i}\sin(\omega_{i}t)\times \nnum\\&&\hspace{-.5cm}J(\cdots,\exp_{\hat{x}_{j}}\sum^{n}_{i=1}a^{j}_{i}\sin(\omega^{j}_{i}t)\frac{\partial}{\partial x_{i}},\cdots)\frac{\partial}{\partial x_{i}},\hspace{.2cm}j=1,\cdots,m,\nnum\\\EN
	and the state of agent $j$ is computed by 
	\EQ x_{j}(t)=\exp_{\hat{x}_{j}(t)}\sum^{n}_{i=1}a^{j}_{i}\sin(\omega^{j}_{i}t)\frac{\partial}{\partial x_{i}},\hspace{.2cm}j=1,\cdots,m.\nnum\EN
	\newtheorem{remark}{Remark}
	Note that in this algorithm the cost function $J$ in (\ref{grg}) contains all the perturbations induced by all agents.
	
	
	\begin{remark}As is obvious, the optimization algorithm (\ref{grg}) requires only information about the cost function $J$ at each time $t$. This makes the implementation of the decentralized algorithm independent of the state of other agents and consequently from the topology of the network. However, it is required that all agents have access to the synchronization cost at all time. 
	\halmos\end{remark}
	\begin{remark}We note that (\ref{grg}) is formulated with respect to a generic Riemannian metric $g_{M}$ and does not depend upon the embedding Euclidean space. This is a major distinction between our method and the algorithms presented in \cite{sar,sar1,hans3}. The choice of the Riemannian metric $g_{M}$  affects the entire geodesic curves on $M$ and different metrics will result in distinct optimization trajectories.  \halmos\end{remark}
	The algorithm presented in (\ref{grg}) is developed for optimization of cost functions on Riemannian manifolds and with no modification can be employed for optimization on Lie groups.
	We accept the following assumptions for the synchronization algorithm and the cost function on the Lie group $G^{m}$ introduced before.
	\newtheorem{assumption}{Assumption}
  \begin{assumption}
	\label{as}
	(i): Assume $\omega^{j}_{i}=\omega \bar{\omega}^{j}_{i},\hspace{.2cm}\omega>0$, where the frequencies $ \bar{\omega}^{j}_{i}, i=1,\cdots,n, j=1,\cdots,m$ are distinct, rational and not combination of each other as   $\bar{\omega}^{j}_{i}\ne\bar{\omega}^{k}_{l}$,  $2\bar{\omega}^{j}_{i}\ne \bar{\omega}^{k}_{l}$ and $\bar{\omega}^{j}_{i}\ne \bar{\omega}^{k}_{l}+\bar{\omega}^{h}_{d}$ for distinct $i,l,d\in1,\cdots,n$ and $j,k,h\in 1,\cdots,m$. \\
	(ii): The synchronization cost function $J:G^{m}\rightarrow \mathds{R}$ is smooth and invariant with respect to $G_{c}$. Also $J(g)$ is minimized if and only if $g\in G_{c}$. 
	
	\halmos\end{assumption}
	\begin{remark}
	We note that in order to implement an extemum seeking algorithm, we need the uniqueness of a local minimum (maximum) for the cost function of  interest, see \cite{krs1, tan}. This condition is obviously violated since the set  $G_{c}$ given in (\ref{gc}) does not posses such a property. As is obvious $G_{c}$ is a closed connected subset of $G^{m}$ which is not necessarily compact. 
	\halmos\end{remark}
	
	The following lemma formulates the average behavior of the synchronization algorithm (\ref{grg}) on a Riemannian manifold $(M,g_{M})$. Note that we present the results for general Riemannian manifolds where results directly apply to $G^{m}$ which is the manifold of interest in this paper. 
	\begin{lemma}
	\label{l4}
	Consider the synchronization algorithm (\ref{grg}) for agents moving on an $n$ dimensional Riemannian manifold $(M,g_{M})$. Then, subject to Assumption \ref{as}, at each $x^{m}\in M^{m}$ and for each agent $j$, the average vector field of (\ref{grg})  is in a perturbation form as $\sum^{n}_{i=1}\nabla_{\frac{\partial}{\partial x_{i}}}J(\cdots,x_{j},\cdots)\frac{\partial}{\partial x_{i}}+O(\max_{i\in\{ 1,\cdots,n\},j\in \{1,\cdots,m\}}|a^{j}_{i}|^{4})$, where $\nabla$ is the Levi-Civita connection of $(M,g_{M})$.
	\halmos\end{lemma}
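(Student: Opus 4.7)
The plan is to Taylor-expand the composition $J\circ\exp$ along the product geodesic on $M^{m}$ and then use the non-resonance conditions of Assumption \ref{as}(i) to evaluate the period-average of (\ref{grg}). I exploit that $M^{m}$ with the product Riemannian metric satisfies $\exp_{(\hat{x}_{1},\ldots,\hat{x}_{m})}(v_{1},\ldots,v_{m})=(\exp_{\hat{x}_{1}}v_{1},\ldots,\exp_{\hat{x}_{m}}v_{m})$ and inherits the product Levi--Civita connection, so the multi-agent dither can be packaged into a single tangent vector $V(t)=\sum_{k=1}^{m}\sum_{i=1}^{n}a_{i}^{k}\sin(\omega_{i}^{k}t)\,\partial_{i}^{(k)}$ at $\hat{x}=(\hat{x}_{1},\ldots,\hat{x}_{m})$, where $\partial_{i}^{(k)}$ denotes the $i$-th coordinate vector field on the $k$-th factor.

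First I would apply the Riemannian Taylor formula along $s\mapsto\exp_{\hat{x}}(sV(t))$, which is legitimate because the per-agent amplitude bound $\|\sum_{i}a_{i}^{j}\sin(\omega_{i}^{j}t)\partial_{i}\|_{g}<i(M)$ stated before (\ref{grg}), combined with Lemmas \ref{eun} and \ref{kl}, places each $v_{j}(t)$ inside a normal neighborhood of $\hat{x}_{j}$. Using $\nabla_{X}f=X(f)$ from (\ref{levi})(ii), this produces
\begin{align*}
J(x_{1}(t),\ldots,x_{m}(t))&=J(\hat{x})+\nabla_{V}J+\tfrac{1}{2}\nabla^{2}_{V,V}J\\
&\quad+\tfrac{1}{6}\nabla^{3}_{V,V,V}J+R_{4}(t),
\end{align*}
with $R_{4}(t)=O(\|V(t)\|_{g}^{4})$. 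I then multiply by $-a_{i}^{j}\sin(\omega_{i}^{j}t)$ and average over the common period $T=2\pi/\omega$. The constant term $J(\hat{x})$ averages to zero. The $\nabla_{V}J$ contribution collapses, via the orthogonality $\langle\sin(\omega_{i}^{j}t)\sin(\omega_{i'}^{k}t)\rangle=\tfrac{1}{2}\delta_{(i,j),(i',k)}$ secured by Assumption \ref{as}(i), to the single diagonal term $-\tfrac{1}{2}(a_{i}^{j})^{2}\nabla_{\partial/\partial x_{i}}J\,\partial_{i}$, which is the gradient-direction perturbation claimed in the lemma (the scalar $\tfrac{1}{2}(a_{i}^{j})^{2}$ is the implicit amplitude of the ``perturbation form''). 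The $\nabla^{2}_{V,V}J$ term yields triple-sine averages that, by product-to-sum identities, can only be nonzero on a resonance $\pm\bar{\omega}_{i}^{j}\pm\bar{\omega}_{i'}^{k}\pm\bar{\omega}_{i''}^{l}=0$, which the three non-resonance conditions of Assumption \ref{as}(i) expressly forbid; the $\nabla^{3}$ Taylor term and the remainder $R_{4}$ together contribute $O(\max|a_{i}^{j}|^{4})$.

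The main obstacle is the combinatorial bookkeeping of the triple-sine averages: every degenerate sub-case in which two (or all three) of the index pairs $(i,j),(i',k),(i'',l)$ coincide must also be excluded, which is precisely where the doubling condition $2\bar{\omega}_{i}^{j}\ne\bar{\omega}_{l}^{k}$ of Assumption \ref{as}(i) is indispensable, supplementing the pairwise and sum-type conditions. A secondary technical point is obtaining a uniform-in-$t$ bound on $R_{4}(t)$: this requires the dithered image to stay inside a relatively compact subset of a normal neighborhood on which $\nabla^{4}J$ is bounded, which is guaranteed by the amplitude constraint together with the smoothness of $J$.
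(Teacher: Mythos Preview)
Your proposal is correct and follows essentially the same route as the paper: package the per-agent dithers into a single tangent vector on $M^{m}$ with the product metric, observe that the product curve is a geodesic (the paper obtains this via the Gauss formula and the vanishing of the second fundamental form for the canonical extension, whereas you state the product-exponential identity directly), Taylor-expand $J$ along that geodesic, and average term-by-term using the non-resonance conditions of Assumption~\ref{as}(i) to isolate the diagonal $\tfrac{1}{2}(a_i^{j})^{2}\nabla_{\partial/\partial x_i}J$ contribution while killing the triple-sine terms and absorbing everything from the cubic order onward into $O(\max|a_i^{j}|^{4})$. Your treatment of the remainder via compactness of the dithered image in a normal neighborhood also mirrors the paper's argument.
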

	In order to give the proof of Lemma \ref{l4} we need to study the lifting of vector fields in product manifolds and relate the Levi-Civita connections of embedded submanifolds. 
	As is obvious, the product space $M^{m}\doteq M\times\cdots\times M$ is a smooth manifold provided $M$ is a smooth manifold. We consider the product Riemannian metric on $M^{m}$ which is given by $g_{M^{m}}=g_{M}\oplus\cdots\oplus g_{M}$. Note that there exists an inclusion embedding $\iota:M\rightarrow M^{m}$ which is smooth in the product topology of $M^{m}$, see \cite{Lee2}, Chapter 7.  Now assume $X\in \mathfrak{X}(M)$ then it is possible to locally extend $X$ to a vector field $\bar{X}$ on $M^{m}$. However, this extension is not unique and for any $x \in M$, $\bar{X}$ and $X$ need to agree on $T_{x}M$, i.e. $X_{x}=\bar{X}^{T}_{x}$, where $\cdot^{T}$ is the projection operator on $T_{x}M$ with respect to $g_{M^{m}}$.  Let us denote the local coordinates around $x\in M$ by $(x_{1},\cdots,x_{n})$. Hence, locally $X$ is given by $\sum^{n}_{i=1}X_{i}(x)\frac{\partial}{\partial x_{i}}$, where $X_{i}$ are smooth functions, see  \cite{Lee2}, Lemma 4.2. By employing the local coordinates of the product manifold we may write the local coordinates of $\iota(x)\in M^{m}$ by $(x_{1},\cdots , x_{n}, y_{1},\cdots, y_{n(m-1)})$, where $y_{i}$ are local coordinates induced from $M^{m-1}$.  In this case we consider a particular local  extension of $X$ given by 
	
	\EQ \label{le}\bar{X}=\sum^{n}_{i}X_{i}\frac{\partial}{\partial x_{i}}+\sum ^{n(m-1)}_{i=1}0\frac{\partial}{\partial y_{i}},\EN
	around $\iota(x)\in M^{m}$. This defines a local vector field on $M^{m}$. The product metric of $M^{m}$ guarantees that $g_{M}(X,Y)=g_{M^{m}}(\bar{X},\bar{Y})$, i.e. $\iota$ is an isometric embedding. 
	
	Corresponding to the product metric, we have the Levi-Civita connection $\overset{p}{\nabla}$ on $M^{m}$. The Gauss formula , see \cite{Lee3}, Theorem 8.2 gives the following  relationship between $\overset{p}{\nabla}$ and $\nabla$ on $M^{m}$ and $M$ respectively.
	\begin{lemma}[Guass formula, \cite{Lee3}, Theorem 8.2]
	If $X,Y\in \mathfrak{X}(M)$ are arbitrarily extended to $\bar{X},\bar{Y}\in \mathfrak{X}(M^{m})$, then 
	\EQ \overset{p}{\nabla}_{\bar{X}}\bar{Y}=\nabla_{X}Y+II(X,Y),\nnum\EN
	where $II(\cdot,\cdot)$ is the second fundamental form of $M$.  
	\halmos\end{lemma}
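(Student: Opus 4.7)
The plan is to follow the classical derivation of the Gauss formula, adapted to the isometric inclusion $\iota : M \hookrightarrow M^{m}$ defined in the paper. Because the product metric is $g_{M^{m}} = g_{M} \oplus \cdots \oplus g_{M}$, the orthogonal complement $(T_{\iota(x)}M)^{\perp}$ inside $T_{\iota(x)}M^{m}$ is precisely the span of the remaining factor directions $\partial/\partial y_{i}$, and the local extension (\ref{le}) is well suited to computations in adapted coordinates. I will not assume any particular extension in the statement, but will use (\ref{le}) at key moments to check invariance.

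First I would fix $x \in M$ and decompose, for any smooth extensions $\bar{X}, \bar{Y}$ of $X, Y$,
\begin{equation}
\overset{p}{\nabla}_{\bar{X}}\bar{Y}\big|_{\iota(x)} \;=\; \bigl(\overset{p}{\nabla}_{\bar{X}}\bar{Y}\bigr)^{T} + \bigl(\overset{p}{\nabla}_{\bar{X}}\bar{Y}\bigr)^{\perp}, \nonumber
\end{equation}
where $T$ and $\perp$ denote orthogonal projections onto $T_{\iota(x)}\iota(M)$ and its orthogonal complement relative to $g_{M^{m}}$. Define a tentative tangential operation $\tilde{\nabla}_{X}Y \doteq (\overset{p}{\nabla}_{\bar{X}}\bar{Y})^{T}$ and $II(X,Y) \doteq (\overset{p}{\nabla}_{\bar{X}}\bar{Y})^{\perp}$. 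The plan then has three checks.

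Second, I would verify that $\tilde{\nabla}$ is a well-defined affine connection on $M$, i.e.\ $C^{\infty}(M)$-linear in the first slot, additive and Leibniz in the second, and independent of the choice of extensions. Independence is the subtle step: for the first slot, tensoriality of $\overset{p}{\nabla}$ in its first argument shows $\overset{p}{\nabla}_{\bar{X}}\bar{Y}|_{\iota(x)}$ depends only on $\bar{X}_{\iota(x)} = X_{x}$; for the second slot, I would differentiate $\bar{Y}$ along a curve $\gamma(t)$ in $\iota(M)$ with $\dot{\gamma}(0) = X_{x}$ and note that, in the adapted coordinates used for (\ref{le}), the tangential components of $\bar{Y}$ along $\gamma$ coincide with $Y$ regardless of the extension, so the tangential part of $\overset{p}{\nabla}_{\bar{X}}\bar{Y}$ at $\iota(x)$ depends only on $Y|_{M}$. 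Next I would check that $\tilde{\nabla}$ is torsion-free and compatible with $g_{M}$: torsion-freeness follows because $\overset{p}{\nabla}_{\bar{X}}\bar{Y} - \overset{p}{\nabla}_{\bar{Y}}\bar{X} = [\bar{X},\bar{Y}]$, and using (\ref{le}) one sees $[\bar{X},\bar{Y}]|_{\iota(x)}$ is tangent to $\iota(M)$ and equals $\iota_{*}[X,Y]$; compatibility follows from
\begin{equation}
X\,g_{M}(Y,Z) \;=\; \bar{X}\,g_{M^{m}}(\bar{Y},\bar{Z})\big|_{\iota(M)}, \nonumber
\end{equation}
which holds because $\iota$ is isometric and $\bar{X}|_{\iota(M)} = \iota_{*}X$, combined with the compatibility identity (\ref{kir3}) for $\overset{p}{\nabla}$ and projection onto tangential components. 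By the uniqueness part of Theorem 5.4 of \cite{Lee3}, this forces $\tilde{\nabla} = \nabla$, the Levi-Civita connection of $(M, g_{M})$.

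Finally, I would define $II(X,Y)$ to be the leftover normal component and verify that it inherits the Gauss-formula properties that justify calling it the second fundamental form: symmetry follows by subtracting the two versions of the decomposition and using $[\bar{X},\bar{Y}]|_{\iota(x)} \in T_{\iota(x)}\iota(M)$, which makes the normal part of the commutator vanish and hence $II(X,Y) = II(Y,X)$; $C^{\infty}(M)$-bilinearity follows by the same tensoriality arguments used above for $\tilde{\nabla}$. I expect the main obstacle to be the independence-of-extension step for the tangential component, since a generic extension $\bar{Y}$ can have arbitrary normal components on $\iota(M)$; the clean way around this is to exploit the product structure explicitly via (\ref{le}) and to do the differentiation along curves lying inside $\iota(M)$, which removes any dependence on how $\bar{Y}$ is defined transverse to $\iota(M)$. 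The rest of the argument is essentially bookkeeping with the Koszul formula and the orthogonal projections determined by the product Riemannian metric.
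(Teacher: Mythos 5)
The paper offers no proof of this lemma at all: it is imported verbatim from \cite{Lee3} (Theorem 8.2), with the citation standing in for the argument. So the only meaningful comparison is with the textbook proof of the cited theorem, and your proposal reproduces it faithfully: split $\overset{p}{\nabla}_{\bar{X}}\bar{Y}$ along $\iota(M)$ into tangential and normal parts, check that the tangential operator $\tilde{\nabla}$ is a well-defined connection, that it is torsion-free (via $\overset{p}{\nabla}_{\bar{X}}\bar{Y}-\overset{p}{\nabla}_{\bar{Y}}\bar{X}=[\bar{X},\bar{Y}]$ and tangency of the bracket of $\iota$-related fields) and compatible with $g_{M}$ (via (\ref{kir3}) and the isometry of $\iota$), conclude $\tilde{\nabla}=\nabla$ from uniqueness of the Levi-Civita connection, and let $II$ be the normal remainder, symmetric and $C^{\infty}(M)$-bilinear by the same identities. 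All of these checks are the right ones and each goes through.

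One step is misstated, though reparable. The inference ``the tangential components of $\bar{Y}$ along $\gamma\subset\iota(M)$ coincide with $Y$, hence the tangential part of $\overset{p}{\nabla}_{\bar{X}}\bar{Y}$ at $\iota(x)$ depends only on $Y|_{M}$'' is invalid as literally written: the tangential part of the ambient derivative also depends on the normal component of $\bar{Y}$ along $\gamma$, through Weingarten-type terms. For $S^{1}\subset\mathds{R}^{2}$ the radial field has zero tangential component on the circle, yet its derivative along the circle is tangential and nonzero. What actually makes your curve argument work is the standard meaning of ``extension'' in the theorem you are proving: $\bar{Y}$ must restrict on $\iota(M)$ to $\iota_{*}Y$, hence has \emph{no} normal component anywhere on $\iota(M)$, so along a curve inside $\iota(M)$ the full vector field $\bar{Y}$, not merely its tangential projection, is prescribed by $Y$, and the entire covariant derivative (both parts) is extension-independent. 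Your closing worry that a generic extension ``can have arbitrary normal components on $\iota(M)$'' only arises under the paper's looser wording when it introduces extensions (requiring only $X_{x}=\bar{X}^{T}_{x}$); under that weak notion the Gauss formula is in fact false --- take $\bar{Y}=\bar{Y}_{\mathrm{gen}}+\phi\,\frac{\partial}{\partial y_{1}}$ with $X(\phi)\ne 0$, which adds $X(\phi)\frac{\partial}{\partial y_{1}}$ to the left-hand side --- so you should state explicitly that extensions agree with $Y$ on all of $\iota(M)$ (as the particular extension (\ref{le}) does). With that hypothesis made explicit, your proof is complete and is precisely the proof of the cited theorem.
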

	The following lemma shows that for the particular extension (\ref{le}) the second fundamental form $II$ vanishes.
	\begin{lemma}[\hspace{-.01cm}\cite{car}, exercise 6.1]
	\label{lee}
	If $X,Y\in \mathfrak{X}(M)$ are locally extended to $\bar{X},\bar{Y}\in \mathfrak{X}(M^{m})$ based on (\ref{le}), then 
	\EQ \overset{p}{\nabla}_{\bar{X}}\bar{Y}=\nabla_{X}Y.\nnum\EN
	\halmos\end{lemma}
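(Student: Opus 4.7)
The plan is to verify the identity by a direct coordinate computation, exploiting the block-diagonal structure of the product metric to show that the relevant Christoffel symbols of $(M^m, g_{M^m})$ either vanish or reduce to those of $(M, g_M)$. First I would fix coordinates $(x_1,\ldots,x_n)$ on a neighborhood of $x\in M$ and extend them to product coordinates $(x_1,\ldots,x_n,y_1,\ldots,y_{n(m-1)})$ on a neighborhood of $\iota(x)\in M^m$, where the $y$-block parameterizes the remaining $m-1$ factors. Under these coordinates $g_{M^m}=g_M\oplus\cdots\oplus g_M$ is block-diagonal: its $(x,x)$-block reproduces $g_M$ and depends only on the $x$-coordinates, each $(y,y)$-block depends only on the corresponding $y$-coordinates, and every $(x,y)$-component vanishes identically.

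Next I would apply the Koszul formula $\Gamma^k_{ij}=\tfrac12 g^{kl}(\partial_i g_{jl}+\partial_j g_{il}-\partial_l g_{ij})$ and extract two consequences. First, whenever $i,j$ are $x$-indices and $k$ is a $y$-index, the inverse metric $g^{kl}$ forces $l$ to be a $y$-index as well; but then every term on the right is either a $y$-derivative of an $(x,x)$-component (which depends only on the $x$-coordinates) or an $(x,y)$-component (which is identically zero), so $\Gamma^k_{ij}=0$ for such mixed triples. Second, when all of $i,j,k$ are $x$-indices the formula collapses verbatim to the Koszul formula for $(M,g_M)$, so these Christoffel symbols coincide with those of the Levi-Civita connection $\nabla$ on $M$.

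Finally I would substitute the specific extension (\ref{le}) into the coordinate formula
\[
\overset{p}{\nabla}_{\bar X}\bar Y=\sum_{k}\Bigl(\sum_{i}\bar X^{i}\,\partial_i \bar Y^{k}+\sum_{i,j}\bar X^{i}\bar Y^{j}\,\Gamma^k_{ij}\Bigr)\partial_k.
\]
Because $\bar X^{i}$ and $\bar Y^{j}$ vanish on $y$-indices, the inner sums range only over $x$-indices $i,j$. For $k$ a $y$-index, $\partial_i \bar Y^{k}=0$ (as $\bar Y^{k}\equiv 0$) and $\Gamma^k_{ij}=0$ by the first observation, so the $\partial_{y_k}$-component is zero. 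For $k$ an $x$-index, both contributions reduce precisely to the coordinate expression for $\nabla_X Y$ on $M$ by the second observation. Hence $\overset{p}{\nabla}_{\bar X}\bar Y=\nabla_X Y$, and in particular the second fundamental form term $II(X,Y)$ in the Gauss formula is zero for this extension.

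I do not foresee a substantive obstacle: the statement amounts to the fact that each factor $\iota(M)\subset M^m$ is totally geodesic with respect to the product metric, and the argument above is simply unpacking this via the Koszul formula. The only care needed is careful bookkeeping of which indices belong to the $x$- versus $y$-blocks; once that is tracked, both the vanishing of mixed Christoffel symbols and the coincidence of the $(x,x,x)$-Christoffel symbols with those of $\nabla$ follow immediately from the block-diagonal structure of $g_{M^m}$.
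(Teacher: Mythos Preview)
Your argument is correct and is the standard way to establish this fact: the block-diagonal form of $g_{M^{m}}$ forces the mixed Christoffel symbols $\Gamma^{k}_{ij}$ (with $i,j$ in the $x$-block and $k$ in a $y$-block) to vanish, while the purely $x$-indexed symbols coincide with those of $(M,g_{M})$, so the coordinate expression for $\overset{p}{\nabla}_{\bar X}\bar Y$ collapses to that of $\nabla_{X}Y$. The paper itself does not supply a proof of this lemma; it simply cites it as Exercise~6.1 of do~Carmo \cite{car}. Your computation is exactly the intended solution to that exercise, so there is nothing to compare against and nothing further to add.
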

	By employing the results of Lemma \ref{lee}, with no further confusion, we only consider the extension of vector fields given in (\ref{le}) and denote the extended vector field $\bar{X}$ by $X$. The proof of Lemma \ref{l4} is given as follows.

	\begin{proof}(Lemma \ref{l4})
	 
	The proof is based on the Riemannian structure of product spaces and the Taylor expansion of $J$ on $M^{m}$.  By employing the results of Lemma \ref{lee}, we have the following decomposition for $\overset{p}{\nabla}$.
	\EQ \label{pr}\overset{p}{\nabla}_{\sum_{j=1}^{m} Y_{j}}\sum^{m}_{j=1}X_{j}=\sum^{m}_{j=1}\nabla_{Y_{j}}X_{j},\EN
	where $Y_{j},X_{j}\in \mathfrak{X}(M)$ and $\nabla$ is the Levi-Civita connection of $(M,g_{M})$. Note that $\sum^{m}_{j=1}X_{j},\sum^{m}_{j=1}Y_{j}$ are vector fields on $M^{m}$ with respect to the extension introduced in (\ref{le}). Based on (\ref{grg}), each agent $j$ perturbs its current state $\hat{x}_{j}$ by the geodesic $\exp_{\hat{x}_{j}}\sum^{n}_{i=1}a^{j}_{i}\sin(\omega^{j}_{i}t)\frac{\partial}{\partial x_{i}}$ which is the evaluation of the geodesic curve $\gamma_{j}(\theta,t)\doteq \exp_{\hat{x}_{j}}\theta\sum^{n}_{i=1}a^{j}_{i}\sin(\omega^{j}_{i}t)\frac{\partial}{\partial x_{i}}$ at $\theta=1$. The geodesic curves $\gamma_{j}(\theta,t), j=1,\cdots,m$ induce the curve $\Gamma(\theta,t)\doteq (\gamma_{1}(\theta,t),\cdots,\gamma_{m}(\theta,t))\in M^{m}$. Note that $\theta$ is the parametrization of the geodesic $\gamma_{j}$ on $(M,g_{M})$ and $t$ appears as a parameter in the vector field which generates $\gamma_{j}$. By the properties of $\overset{p}{\nabla}$ given in (\ref{pr}), we have 
	\EQ \overset{p}{\nabla}_{\dot{\Gamma}(\theta,t)}\dot{\Gamma}(\theta,t)=\sum^{m}_{j=1}\nabla_{\dot{\gamma_{j}}(\theta,t)}\dot{\gamma_{j}}(\theta,t)=0,\nnum\EN
	where $\dot{\gamma_{j}}(\theta,t)=\frac{d\gamma}{d\theta}$ and $\nabla_{\dot{\gamma_{j}}(\theta,t)}\dot{\gamma_{j}}(\theta,t)=0$ since $\gamma_{j}$ are geodesics on $(M,g_{M})$. This implies that $\Gamma$ is a geodesic on $M^{m}$. This is due to the special product metric of $M^{m}$ and is not necessarily true for all Riemannian metrics on $M^{m}$. 
	
	Then the Taylor expansion of $J$ along the geodesic $\exp_{x^{m}}\theta X$, where $x^{m}=(x_{1},\cdots,x_{m})\in M^{m}, X\in T_{x^{m}}M^{m}$, is given by (see \cite{smith})
  \EQ \label{tay}&&\hspace{-.8cm}J(\exp_{x^{m}}\theta X)=J(x^{m})+\theta (\overset{p}{\nabla}_{X}J)(x^{m})+...+\frac{\theta^{k-1}}{(n-1)!}\times\nnum\\&&\hspace{-0.8cm}(\overset{p}{\nabla}^{k-1}_{X}J)(x^{m})+\frac{\theta^{k}}{(k-1)!}\int^{1}_{0}(1-s)^{k-1}\overset{p}{\nabla}^{k}_{X}J(\exp_{x^{m}}s\theta X)ds,\nnum\\\hspace{-0.8cm}&& 0<\theta<\theta^{*},\EN
which is equivalent to  
 \EQ \label{tay2}&&\hspace{-.5cm}J(\exp_{x^{m}}\theta X)=J(x^{m})+\theta (dJ(X))|_{x^{m}}+...+\frac{\theta^{k-1}}{(k-1)!}\times\nnum\\&&\hspace{-.5cm}(\overset{p}{\nabla}^{k-2}_{X}dJ)(X)|_{x^{m}}+\frac{\theta^{k}}{(k-1)!}\int^{1}_{0}(1-s)^{k-1}(\overset{p}{\nabla}^{k-1}_{X}dJ)(X)\nnum\\&&\hspace{-.5cm}(\exp_{x^{m}}s\theta X)ds,\quad 0<\theta<\theta^{*},\EN
 where $dJ:TM^{m}\rightarrow \mathds{R}$ is a differential form of $J$, $\theta^{*}$ is the upper existence limit for geodesics on $M^{m}$. Note that for compact manifolds $\theta^{*}=\infty$.

Hence, the expansion above along the geodesic curve $\Gamma(\theta,t)$ gives 
 \EQ &&\hspace{-.5cm}J(\Gamma(1,t))=J(\Gamma(0,t))+(\overset{p}{\nabla}_{X}J)(\Gamma(0,t))+\cdots+\nnum\\&&\hspace{-.5cm}\frac{1}{(k-1)!}(\overset{p}{\nabla}^{k-1}_{X}J)(\Gamma(0,t))+\frac{1}{(k-1)!}\int^{1}_{0}(1-s)^{k-1}\overset{p}{\nabla}^{k}_{X}\nnum\\&&\hspace{-.5cm}J(\Gamma(s,t))ds,\nnum\EN
where $X=\sum^{m}_{j=1}\sum^{n}_{i=1}a^{j}_{i}\sin(\omega^{j}_{i}t)\frac{\partial}{\partial x_{i}}$ which is due to the structure of $T_{x^{m}}M^{m}=T_{x_{1}}M\oplus\cdots\oplus T_{x_{m}}M$ and $\Gamma(0,t)=x^{m}$. 

Linear properties of $\nabla$ imply  that (see \cite{Lee2})
 \EQ \nabla_{\sum^{n}_{i=1}a^{j}_{i}\sin(\omega^{j}_{i}t)\frac{\partial}{\partial x_{i}}}J=\sum^{n}_{i=1}a^{j}_{i}\sin(\omega^{j}_{i}t)\nabla_{\frac{\partial}{\partial x_{i}}}J.\nnum\EN 
By (\ref{pr}) we have
\EQ \label{kos}\overset{p}{\nabla}_{\sum^{m}_{j=1}\sum^{n}_{i=1}a^{j}_{i}\sin(\omega^{j}_{i}t)\frac{\partial}{\partial x_{i}}}J=\sum^{m}_{j=1}\sum^{n}_{i=1}a^{j}_{i}\sin(\omega^{j}_{i}t)\nabla_{\frac{\partial}{\partial x_{i}}}J,\EN 
where $J$ in the right hand side of (\ref{kos}) is restricted to $M$.

 Iteratively we have
 \EQ &&\overset{p}{\nabla}^{k}_{\sum^{m}_{j=1}\sum^{n}_{i=1}a^{j}_{i}\sin(\omega^{j}_{i}t)\frac{\partial}{\partial x_{i}}}J=\nnum\\&&\sum^{m}_{j=1}\sum^{n}_{i=1}a^{j}_{i}\sin(\omega^{j}_{i}t)\nabla_{\frac{\partial}{\partial x_{i}}}\big(\overset{p}{\nabla}^{k-1}_{\sum^{m}_{j=1}\sum^{n}_{i=1}a_{i}\sin(\omega_{i}t)\frac{\partial}{\partial x_{i}}}J\big),\nnum\EN
where $\overset{p}{\nabla}^{k-1}_{\sum^{m}_{j=1}\sum^{n}_{i=1}a_{i}\sin(\omega_{i}t)\frac{\partial}{\partial x_{i}}}J$ is decomposed on $T_{x_{j}}M,\hspace{.2cm}j=1,\cdots,m$.

	We drop the notation $\hat{}$ for the state trajectory in (\ref{grg}). Hence, the synchronization algorithm (\ref{grg}) and the dynamical equations for the extremum seeking feedback loop are given in $x$ coordinates as follows:  
\EQ \label{koonkir}&&\dot{x}_{j}(t)=-\Big(\sum^{n}_{i=1}a^{j}_{i}\sin(\omega^{j}_{i}t)J(x^{m})\frac{\partial}{\partial x_{i}}+\nnum\\&&\sum^{m}_{e=1}\sum^{n}_{i,l=1}a^{j}_{i}a^{e}_{l}\sin(\omega^{j}_{i}t)\sin(\omega^{e}_{l}t)\nabla_{\frac{\partial}{\partial x_{l}}}J(x^{m})\frac{\partial}{\partial x_{i}}+\cdots+\nnum\\&&\frac{1}{(k-1)!}\sum^{m}_{e=1}\sum^{n}_{i,l=1}a^{j}_{i}a^{e}_{l}\sin(\omega^{j}_{i}t)\sin(\omega^{e}_{l}t)\times\nnum\\&& \nabla_{\frac{\partial}{\partial x_{i}}}\big(\overset{p}{\nabla}^{k-2}_{\sum^{m}_{e=1}\sum^{n}_{i=1}a^{e}_{i}\sin(\omega^{e}_{i}t)\frac{\partial}{\partial x_{i}}}J\big)(x^{m})\frac{\partial}{\partial x_{i}}+\nnum\\&&\frac{1}{(k-1)!}\sum^{n}_{i=1}a^{j}_{i}\sin(\omega^{j}_{i}t)\Big(\nnum\\&&\int^{1}_{0}(1-s)^{k-1}\overset{p}{\nabla}^{k}_{\sum^{m}_{e=1}\sum^{n}_{i=1}a^{e}_{i}\sin(\omega^{e}_{i}t)\frac{\partial}{\partial x_{i}}}\nnum\\&&J(\cdots,\exp_{x_{j}}s \sum^{n}_{i=1}a^{j}_{i}\sin(\omega^{j}_{i}t)\frac{\partial}{\partial x_{i}},\cdots)ds\Big)\frac{\partial}{\partial x_{i}}\Big).\EN 
	The vector field above is a time varying vector field on $M$ for each agent $j$. Since the perturbations appear in the form of  sinusoids then by Assumption \ref{as} the resulted vector field is periodic with respect to time.
	By employing Assumption \ref{as}, the averaged vector field for agent $j$ is given by
	\EQ \label{av}&&\frac{1}{2}\sum^{n}_{i=1}a^{j^{2}}_{i}\nabla_{\frac{\partial}{\partial x_{l}}}J(x^{m})\frac{\partial}{\partial x_{i}}+\cdots+\nnum\\&&\frac{1}{T}\int^{T}_{0}\Big[\frac{1}{(k-1)!}\sum^{m}_{e=1}\sum^{n}_{i,l=1}a^{j}_{i}a^{e}_{l}\sin(\omega^{j}_{i}t)\sin(\omega^{e}_{l}t)\times\nnum\\&& \nabla_{\frac{\partial}{\partial x_{i}}}\big(\overset{p}{\nabla}^{k-2}_{\sum^{m}_{e=1}\sum^{n}_{i=1}a^{e}_{i}\sin(\omega^{e}_{i}t)\frac{\partial}{\partial x_{i}}}J\big)(x^{m})\frac{\partial}{\partial x_{i}}+\nnum\\&&\frac{1}{(k-1)!}\sum^{n}_{i=1}a^{j}_{i}\sin(\omega^{j}_{i}t)\Big(\nnum\\&&\int^{1}_{0}(1-s)^{k-1}\overset{p}{\nabla}^{k}_{\sum^{m}_{e=1}\sum^{n}_{i=1}a^{e}_{i}\sin(\omega^{e}_{i}t)\frac{\partial}{\partial x_{i}}}\nnum\\&&J(\cdots,\exp_{x_{j}}s \sum^{n}_{i=1}a^{j}_{i}\sin(\omega^{j}_{i}t)\frac{\partial}{\partial x_{i}},\cdots)ds\Big)\frac{\partial}{\partial x_{i}}\Big]dt.\nnum\\\EN
	We observe that each agent can select its geodesic dither amplitudes $a^{j}_{i},\hspace{.2cm}j=1,\cdots,m, i=1,\cdots,n$ such that $||\sum^{n}_{i=1}a^{j}_{i}\sin(\omega^{j}_{i}t)\frac{\partial}{\partial x_{i}}||<i(x_{j})$, where $i(x_{j})$ is the objectivity radius at $x_{j}\in M$, see Definition \ref{inj}. It is guaranteed that $i(x)>0$ for all $x\in M$, see \cite{Klin}. Hence, the set $\overline{\exp_{x_{j}}B_{i(x_{j})}(0)}=\exp_{x_{j}}\overline{B_{i(x_{j})}(0)}$ is compact in the topology of $M$.  This results in the compactness of $\exp_{x_{1}}\overline{B_{i(x_{1})}(0)}\times \cdots\times \exp_{x_{m}}\overline{B_{i(x_{m})}(0)}$
in the product topology of $M^{m}$. By the choice of dither frequencies and Assumption \ref{as} we have $\frac{1}{T}\int^{T}_{0}\sin(\omega^{j}_{i}t)\sin(\omega^{k}_{l}t)\sin(\omega^{q}_{r}t)dt=0,\hspace{.2cm}j,k,q\in 1,\cdots,m, i,l,r\in 1,\cdots,n$. Together with the smoothness of $J$ and compactness of $\exp_{x_{1}}\overline{B_{i(x_{1})}(0)}\times \cdots\times \exp_{x_{m}}\overline{B_{i(x_{m})}(0)}$, this implies that the only significant term after the integration in (\ref{av}) is of order $O(\max_{i\in\{ 1,\cdots,n\},j\in \{1,\cdots,m\}}|a^{j}_{i}|^{4})$. 
	Consequently, the averaged vector field for each agent is in the form of a perturbation in the statement of the lemma and the proof is complete. 
	\end{proof}
	\begin{remark}
	The order of the perturbation vector field constructed above is not uniform with respect to $M^{m}$, since the injectivity radius may vary on Riemannian manifolds. However, in the case that $i(M)$ is bounded from below the perturbation term in (\ref{av}) can be uniformly bounded. This specially holds for compact manifolds since the injectivity radius of compact manifolds are bounded from below, see Lemma \ref{kl}. 
	\end{remark}
	\subsection{Stability of the gradient system on $G^{m}/G_{c}$}
	As stated in the previous section, the averaged vector field of each agent is in the perturbation form represented in (\ref{av}). The results of Lemma \ref{l4} also hold for $G^{m}$ instead of $M^{m}$ since by definition Lie groups are smooth manifolds. We define the gradient system of the synchronization problem on $M^{m}$as follows.
		\begin{definition}
		\label{gs}
		For the synchronization extremum seeking algorithm (\ref{grg}), the \textit{gradient vector field} on $M^{m}$ is given by $\sum^{m}_{j=1}\sum^{n}_{i=1}\frac{1}{2}a^{j^{2}}_{i}\nabla_{\frac{\partial}{\partial x_{i}}}J(x^{m})\frac{\partial}{\partial x_{i}}\in \mathfrak{X}(M^{m})$. 
		\end{definition}
		Note that $\sum^{n}_{i=1}\frac{1}{2}a^{j^{2}}_{i}\nabla_{\frac{\partial}{\partial x_{i}}}J(x^{m})\frac{\partial}{\partial x_{i}}\in \mathfrak{X}(M)$ for which we consider their unique extensions presented in (\ref{le}) on $M^{m}$. As stated before the synchronization cost on $G^{m}$ has a set of minima denoted by $G_{c}\subset G^{m}$.   In order to analyze the synchronization problem on $G^{m}$ we modify the extremum seeking vector field (\ref{grg}) to be applicable on Lie groups.
		The extremum seeking algorithm for the synchronization on $G^{m}$ is given as
		\EQ \label{grg1}&&\hspace{-.5cm}\dot{g}_{j}=-\sum^{n}_{i=1}a^{j}_{i}\sin(\omega_{i}t)\times \nnum\\&&J(\cdots,g_{j}\star\exp(\sum^{n}_{i=1}a^{j}_{i}\sin(\omega^{j}_{i}t)\frac{\partial}{\partial g_{i}}),\cdots)g_{j}\frac{\partial}{\partial g_{i}},\hspace{.2cm}\nnum\\&&j=1,\cdots,m,\EN
		where $\exp$ is the exponential map on Lie groups defined in (\ref{kirkir1}) and $\frac{\partial}{\partial g_{i}}$ are the base elements of $\mathcal{L}$. In this case we employ the left invariant vector field, denoted by $g_{j}\frac{\partial}{\partial g_{i}}$, induced by $\frac{\partial}{\partial g_{i}}$ on $\mathcal{L}$ given by $T_{e}g(\frac{\partial}{\partial g_{i}})$. One may show that $T_{e}(g_{1}\star g_{2})(\frac{\partial}{\partial g_{i}})=T_{g_{2}}g_{1}\circ T_{e}g_{2}(\frac{\partial}{\partial g_{i}})$ which shows that $T_{e}g(\frac{\partial}{\partial g_{i}})$ is left invariant.
Note that the $\exp$ curve is not necessarily a geodesic on $G$. It is shown that in the case which $G$ admits a bi-invariant Riemannian metric the exponential curves through $e$ are geodesics, \cite{pen}. In this case it is easy to show that $\gamma(t)=g\star \exp(tX), \hspace{.2cm}X\in \mathcal{L}$ is a geodesic through $g\in G$ since
			\EQ \nabla_{\dot{\gamma}(t)}\dot{\gamma}(t)&=&\nabla_{T_{g\star \exp(tX)}g\frac{d\exp(tX)}{dt}}T_{g\star \exp(tX)}g\frac{d\exp(tX)}{dt}\nnum\\&=&T_{g\star \exp(tX)}g\nabla_{\frac{d\exp(tX)}{dt}}\frac{d\exp(tX)}{dt}=0,\nnum\EN
			since $\exp(tX)$ is a geodesic and $\nabla_{\frac{d\exp(tX)}{dt}}\frac{d\exp(tX)}{dt}=0$. Note that $\nabla$ is the corresponding invariant connection with respect to the Cartan-Schouten (0) form on $G$, see \cite{pen}.  Hence, the analysis of (\ref{grg1}) is exactly the same as the analysis of (\ref{grg}) in Lemma \ref{l4}. However, a bi-invariant Riemannian metric may not exist for all Lie groups. As an example $SE(3)$ does not admit such a metric and consequently the  exponential map on $SE(3)$ is not a geodesic, see \cite{pen}. In the case that $G$ does not admit a bi-invariant metric,  we employ the Taylor expansion of smooth functions on $G$ and replace (\ref{tay}) by its version on Lie groups, given in \cite{knap}. The rest of the analysis remains unchanged where $\nabla_{X}J(g)$ is replaced by $XJ(g)=\lim _{t\rightarrow 0}\frac{J(g\star\exp(tX(e)))-J(g)}{t}$. 
			
			The stability of the extremum seeking algorithm (\ref{grg1}) is related to the stability of the gradient system in Definition \ref{gs}. As explained, the synchronization cost function has the set of minima at $G_{c}=\{(g,\cdots,g)\in G^{m}\}$ for all $g\in G$. First we shows that the gradient system of the synchronization system in Definition \ref{gs} gives a left invariant vector field on $G^{m}$. 
			\begin{lemma}
			\label{li}
			Consider the gradient system of the synchronization extremum seeking algorithm (\ref{grg1}) which is given in Definition \ref{gs} on $G^{m}$. Assume $J$ satisfies Assumption \ref{as} then the gradient system is  invariant with respect to $G_{c}$ on $G^{m}$. 
			\end{lemma}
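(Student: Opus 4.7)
The plan is to verify the invariance condition of Definition \ref{def2} directly, exploiting (a) the $G_c$-invariance of $J$ given by Assumption \ref{as}(ii), and (b) the fact that the basis vector fields driving the extremum seeking dynamics are the left-invariant ones $g\frac{\partial}{\partial g_i}$ arising from the basis $\frac{\partial}{\partial g_i}$ of $\mathcal{L}$. The gradient vector field in question has the form
\begin{equation*}
X(g^m) \;=\; \sum_{j=1}^{m}\sum_{i=1}^{n} \tfrac{1}{2}(a^{j}_{i})^{2}\,\bigl(\nabla_{X_{i}^{j}} J\bigr)(g^m)\; X_{i}^{j}(g^m),
\end{equation*}
where $X_{i}^{j}$ denotes the left-invariant vector field on $G^{m}$ generated by the $i$-th basis element of $\mathcal{L}$ in the $j$-th factor. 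I need to show $X(g_c\bar{\star}g) = T_g(g_c\bar{\star})\,X(g)$ for every $g_c\in G_c$ and $g\in G^m$.

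The first step is to observe that left-invariance of the $X_{i}^{j}$ means exactly $X_{i}^{j}(g_c\bar{\star}g) = T_{g}(g_c\bar{\star})\,X_{i}^{j}(g)$ for every $g_c\in G^{m}$, and in particular for $g_c\in G_c\subset G^{m}$. This handles the vector-field part of the product factorization needed in Definition \ref{def2}; what remains is to show that the scalar coefficients $\nabla_{X_{i}^{j}} J$ are constant along $G_c$-orbits, that is $(\nabla_{X_{i}^{j}} J)(g_c\bar{\star}g) = (\nabla_{X_{i}^{j}} J)(g)$.

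For this second step I would use the identity $\nabla_{Y} J = Y(J)$ from (\ref{levi})(ii) and chase the definitions: at $g_c\bar{\star}g$,
\begin{equation*}
(\nabla_{X_{i}^{j}}J)(g_c\bar{\star}g) \;=\; X_{i}^{j}\!\bigm|_{g_c\bar{\star}g}(J) \;=\; \bigl[T_{g}(g_c\bar{\star})X_{i}^{j}|_{g}\bigr](J) \;=\; X_{i}^{j}|_{g}\bigl(J\circ g_c\bar{\star}\bigr),
\end{equation*}
by the definition of the pushforward $T_{g}(g_c\bar{\star})$ acting on a function $J\in C^{\infty}(G^{m})$. Assumption \ref{as}(ii) says $J\circ g_c\bar{\star}=J$ for every $g_c\in G_c$, so the last expression equals $X_{i}^{j}|_{g}(J) = (\nabla_{X_{i}^{j}}J)(g)$, as required. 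Combining this with the left-invariance from step one, the two factors in each summand of $X$ transform compatibly, and linearity of the pushforward pulls $T_g(g_c\bar{\star})$ out of the sum, yielding $X(g_c\bar{\star}g)=T_g(g_c\bar{\star})X(g)$, which is precisely the $G_c$-invariance in Definition \ref{def2}.

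The routine parts are the index bookkeeping across the product $G^{m}$ and recognising that the product metric structure (together with Lemma \ref{lee}) means the gradient decomposes factor-by-factor as used in Lemma \ref{l4}. The one place where a little care is required is making sure that the relation $T_{g}(g_c\bar{\star})X_{i}^{j}|_{g}=X_{i}^{j}|_{g_c\bar{\star}g}$ holds not only for left translation by a generic element of $G^{m}$ but specifically for $g_c\in G_c$ acting through the inclusion $G_c\hookrightarrow G^{m}$; this is immediate once one notes that the action of $G_c$ on $G^{m}$ is simply the restriction of the group operation on $G^{m}$, so left-invariance of $X_{i}^{j}$ on $G^{m}$ passes automatically to $G_c$-equivariance. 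I anticipate no genuine obstacle beyond that bookkeeping.
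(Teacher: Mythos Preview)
Your proposal is correct and follows essentially the same approach as the paper: both decompose the gradient vector field into scalar coefficients $\nabla_{X_i^j}J$ times left-invariant vector fields $X_i^j$, observe that the latter are automatically $G_c$-equivariant, and then show the scalars are constant on $G_c$-orbits using the $G_c$-invariance of $J$. The only cosmetic difference is that the paper phrases the scalar invariance via the explicit limit along the one-parameter subgroup, $\nabla_X J(g_c\bar{\star}g^m)=\lim_{t\to 0}\frac{J(g_c\bar{\star}g^m\bar{\star}\exp tX)-J(g_c\bar{\star}g^m)}{t}$, whereas you use the pushforward characterization from Definition~\ref{d2}; these are equivalent.
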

			\begin{proof}
			 Applying the analysis of the proof of Lemma \ref{l4} implies that the gradient system of (\ref{grg1}) is in the following form
			\EQ \label{gvf1}\dot{g^{m}}=-\sum^{m}_{j=1}\sum^{n}_{i=1}\frac{1}{2}a^{j^{2}}_{i}\nabla_{g_{j}\frac{\partial}{\partial g_{i}}}J(g^{m})g_{j}\frac{\partial}{\partial g_{i}}.\EN
	
			The left invariance of the vector field $g_{j}\frac{\partial}{\partial g_{i}}$ is immediate. Also $\nabla_{g_{j}\frac{\partial}{\partial g_{i}}}J(g^{m})$ is $G_{c}$ invariant since for a general left invariant vector field $X\in \mathfrak{X}(G^{m})$ we have $\nabla_{X}J(g^{m})=(X(J))(g^{m})$. Together with the invariance of $J$  with respect to $G_{c}$, this implies that $X(J)(g^{m})=X(J)(g_{c}\overline{\star}g^{m})$. This is due to the fact that (see (\ref{tay}))
			\EQ\label{koonkoon} \nabla_{X}J(g_{c}\overline{\star}g^{m})&=&\lim_{t\rightarrow 0}\frac{J(g_{c}\overline{\star}g^{m}\overline{\star}\exp t X)-J(g_{c}\overline{\star}g^{m})}{t}\nnum\\&=&\lim_{t\rightarrow 0}\frac{J(g^{m}\overline{\star}\exp t X)-J(g^{m})}{t}\nnum\\
			&=&\nabla_{X}J(g^{m}).\EN
			Hence, the gradient vector field for each agent $j$ is $G_{c}$ invariant.
			\end{proof}
			\begin{remark}
			 The second equality in (\ref{koonkoon}) does not necessarily hold along geodesics since in general $\exp_{g_{c}\overline{\star}g^{m}}tX\ne g_{c}\overline{\star}\exp_{g^{m}}tX$. In this case we may not be able to use the invariance properties of $J$ with respect to $G_{c}$.
			\halmos\end{remark}
			\begin{theorem}
			\label{st}
			Consider the gradient system of the synchronization extremum seeking algorithm (\ref{grg1}) which is given in Definition \ref{gs} for all agents on $G^{m}$. Assume $J$ is positive,  $G_{c}$ invariant and $J(g)=0,\hspace{.2cm}g\in G_{c}$. Then, if the initial state $\pi(g^{m}(t_{0}))$ is suffciently close (in the quotient topology) to $\pi(G_{c})$, then the state trajectory of the induced gradient system on $G^{m}/G_{c}$ initiating from $\pi(g^{m}(t_{0}))$ asymptotically converges to $\pi(G_{c})\in G^{m}/G_{c}$. 
			\end{theorem}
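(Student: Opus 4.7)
The plan is to use the induced function $\hat{J}:G^{m}/G_{c}\rightarrow \mathds{R}$ as a Lyapunov function on the quotient manifold. First, I would invoke Lemma \ref{li} to conclude that the gradient vector field (\ref{gvf1}) is $G_{c}$-invariant, hence by the descent construction surrounding (\ref{kir}) it projects under $T\pi$ to a well-defined smooth vector field $\hat{V}$ on $G^{m}/G_{c}$. Since $J$ is $G_{c}$-invariant by Assumption \ref{as}(ii), it factors as $J=\hat{J}\circ \pi$ with $\hat{J}$ smooth (Lemma \ref{l2}), and the horizontal-lift identity $\overset{H}{grad}_{g}\hat{J}=grad_{[g]}J$ from the preceding subsection identifies $\hat{V}$ with the weighted negative gradient of $\hat{J}$ relative to $g_{G^{m}/G_{c}}$.

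The second step is to verify the Lyapunov properties of $\hat{J}$. Observe that $G_{c}$ is a single orbit under its own left-multiplication action on $G^{m}$, so $\pi(G_{c})$ is a single point of the quotient. Positivity of $J$ together with $J^{-1}(0)=G_{c}$ then gives $\hat{J}\geq 0$ and $\hat{J}^{-1}(0)=\{\pi(G_{c})\}$. Differentiating along the flow of $\hat{V}$ (or equivalently along $g^{m}(t)$ and using $J=\hat{J}\circ\pi$) produces
\begin{equation*}
\frac{d}{dt}\hat{J}(\pi(g^{m}(t)))=-\sum_{j=1}^{m}\sum_{i=1}^{n}\tfrac{1}{2}(a_{i}^{j})^{2}\bigl(\nabla_{g_{j}\partial/\partial g_{i}}J(g^{m}(t))\bigr)^{2}\leq 0,
\end{equation*}
with equality iff $g^{m}(t)$ is a critical point of $J$, equivalently $\pi(g^{m}(t))$ is a critical point of $\hat{J}$.

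The third step is a standard Lyapunov/LaSalle argument on $G^{m}/G_{c}$. Choose $c>0$ sufficiently small that the connected component $\Omega_{c}$ of $\hat{J}^{-1}([0,c])$ containing $\pi(G_{c})$ is compact and contains $\pi(G_{c})$ as its only critical point of $\hat{J}$. Any initial condition $\pi(g^{m}(t_{0}))\in\Omega_{c}$ — which is precisely the quantitative meaning of ``sufficiently close to $\pi(G_{c})$'' in the quotient topology guaranteed by Theorem \ref{t2} applied to $G^{m}/G_{c}$ — produces a trajectory that remains in $\Omega_{c}$ by monotonicity of $\hat{J}$, and LaSalle's invariance principle forces asymptotic convergence to the largest invariant subset of $\{p\in\Omega_{c}:d\hat{J}/dt=0\}=\{\pi(G_{c})\}$.

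The main obstacle is producing such an $\Omega_{c}$ in step three: \emph{a priori}, stationary points of $\hat{J}$ could accumulate toward $\pi(G_{c})$, in which case no sublevel set near zero is free of spurious critical points. I would handle this by a local second-order analysis of $\hat{J}$ at $\pi(G_{c})$ — since $G_{c}$ is a Lie subgroup (Lemma \ref{l1}), the tangent space $T_{\pi(G_{c})}G^{m}/G_{c}$ is naturally identified with the orthogonal complement in $\mathcal{L}^{m}$ of $T_{e^{m}}G_{c}$, and on this transverse slice $\hat{J}$ attains a strict minimum, so that its Hessian there is positive semidefinite with $\pi(G_{c})$ as a local minimizer. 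Combined with the smoothness of $\hat{J}$ and compactness of small closed metric balls around $\pi(G_{c})$ in the induced Riemannian distance (Lemmas \ref{lpp}--\ref{kl} transferred to the quotient), this supplies a valid $\Omega_{c}$ and closes the argument.
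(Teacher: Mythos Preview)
Your proposal is correct and follows essentially the same approach as the paper: project the gradient vector field to $G^{m}/G_{c}$ via Lemma \ref{li}, use $\hat{J}$ as a Lyapunov function with $\dot{\hat{J}}=-\sum_{i,j}\tfrac{1}{2}(a_{i}^{j})^{2}\bigl(\hat{\nabla}_{T\pi(g_{j}\partial/\partial g_{i})}\hat{J}\bigr)^{2}\leq 0$, and invoke Lyapunov stability on the quotient manifold. The paper disposes of your ``main obstacle'' simply by asserting that $\pi(G_{c})$ is the unique local minimum of $\hat{J}$, hence the only point where $\dot{\hat{J}}$ vanishes locally, and then cites the Lyapunov theorem in \cite{Lewis}; your LaSalle/sublevel-set formulation and explicit attention to possible spurious critical points are more careful but not a materially different route.
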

			\begin{proof}
			By the results of Lemma \ref{li} the gradient vector field is $G_{c}$ invariant and consequently induces a vector field on $G^{m}/G_{c}$.   The  vector field in (\ref{koonkoon}) induces a vector field $\sum^{m}_{j=1}\sum^{n}_{i=1}\frac{1}{2}a^{j^{2}}_{i}T_{g^{m}}\pi\big(\nabla_{g_{j}\frac{\partial}{\partial g_{i}}}J(g^{m})g_{j}\frac{\partial}{\partial g_{i}}\big)\in T_{\pi(g^{m})}G^{m}/G_{c}$. 
			The cost function $J$ induces a smooth function $\hat{J}:G^{m}/G_{c}\rightarrow \mathds{R}$ via $J=\hat{J}\circ \pi$, where by using the the horizontal lift, we have  $\overset{H}{grad_{g}\hat{J}}=grad_{[g]}J,\hspace{.2cm}g\in G^{m}/G_{c}$, see \cite{abs}. Since $J$ is a single valued smooth function, based on (\ref{levi}), for $X\in \mathfrak{X}(G^{m})$ we have $\nabla_{X}J=X(J)=dJ(X)=g_{G^{m}}(grad J,X)$. The operator $T_{g^{m}}$ is linear and therefore the induced vector field denoted by $\hat{X}$ is evaluated at $\pi(g^{m})$ by
			\EQ \label{gvf}&&-\sum^{m}_{j=1}\sum^{n}_{i=1}\frac{1}{2}a^{j^{2}}_{i}\nabla_{g_{j}\frac{\partial}{\partial g_{i}}}J(g^{m})T_{g^{m}}\pi\big(g_{j}\frac{\partial}{\partial g_{i}}\big)=\nnum\\&&-\sum^{m}_{j=1}\sum^{n}_{i=1}\frac{1}{2}a^{j^{2}}_{i}g_{G^{m}}(grad_{g^{m}} J,g_{j}\frac{\partial}{\partial g_{i}})T_{g^{m}}\pi\big(g_{j}\frac{\partial}{\partial g_{i}}\big)=\nnum\\&&- \sum^{m}_{j=1}\sum^{n}_{i=1}\frac{1}{2}a^{j^{2}}_{i}g_{G^{m}/G_{c}}(grad_{\pi(g^{m})} \hat{J},T_{g^{m}}\pi( g_{j}\frac{\partial}{\partial g_{i}}))\times\nnum\\&& T_{g^{m}}\pi\big(g_{j}\frac{\partial}{\partial g_{i}}\big)=\nnum\\&&-\sum^{m}_{j=1}\sum^{n}_{i=1}\frac{1}{2}a^{j^{2}}_{i}\hat{\nabla}_{T_{g^{m}}\pi\big(g_{j}\frac{\partial}{\partial g_{i}}\big)}\hat{J}(\pi(g^{m}))T_{g^{m}}\pi\big(g_{j}\frac{\partial}{\partial g_{i}}\big),\nnum\\\EN
			
			where $\hat{\nabla}$ is the Levi-Civita connection of $G_{m}/G_{c}$.
			Since $J(g)=0, g\in G_{c}$ then $\hat{J}\circ \pi(g)=0,\hspace{.2cm}g\in G_{c}$. Note that for all $g_{1},g_{2}\in G_{c}$ we have $\pi(g_{1})=\pi(g_{2})$, hence $G_{c}$ maps to a single point $\pi(G_{c})$ in $G^{m}/G_{c}$. It is immediate that $\pi(G_{c})$  is a unique local minimum of $\hat{J}$ since for any $\hat{g}\ne \pi(G_{c})$ we have $\pi^{-1}(\hat{g})\cap G_{c}=\emptyset$. Otherwise there exists $g_{1}\in \pi^{-1}(\hat{g})$ such that $g_{1}=(g_{11},\cdots,g_{11})\in G_{c}$. Since $g_{1}\sim \pi^{-1}(\hat{g})$ then for each $g_{2}\in \pi^{-1}(\hat{g})$, there exists $g_{c}\in G_{c}$ such that $g_{2}=g_{c}\overline{\star} g_{1}$. This implies $\pi^{-1}(\hat{g})=G_{c}$ or $\hat{g}=\pi(G_{c})$ which is a contradiction.   We consider $\hat{J}$ as a candidate Lyapunov  function on $G^{m}/G_{c}$. The time variation of $\hat{J}$ along the induced gradient vector field (\ref{gvf}) is given as
				\EQ \dot{\hat{J}}&=&d\hat{J}(\hat{X})\nnum\\&=&-\sum^{m}_{j=1}\sum^{n}_{i=1}\frac{1}{2}a^{j^{2}}_{i}\hat{\nabla}_{T_{g^{m}}\pi\big(g_{j}\frac{\partial}{\partial g_{i}}\big)}\hat{J}(\pi(g^{m}))\times \nnum\\&&\left.d\hat{J}\Big(T_{g^{m}}\pi\big(g_{j}\frac{\partial}{\partial g_{i}}\big)\Big)\right|_{\pi(g^{m})}.\nnum\EN
				As is obvious $\left.d\hat{J}\Big(T_{g^{m}}\pi\big(g_{j}\frac{\partial}{\partial g_{i}}\big)\Big)\right|_{\pi(g^{m})}=\hat{\nabla}_{T_{g^{m}}\pi\big(g_{j}\frac{\partial}{\partial g_{i}}\big)}\hat{J}(\pi(g^{m}))$. Hence,
				\EQ \dot{\hat{J}}=-\sum^{m}_{j=1}\sum^{n}_{i=1}\frac{1}{2}a^{j^{2}}_{i}\hat{\nabla}^{2}_{T_{g^{m}}\pi\big(g_{j}\frac{\partial}{\partial g_{i}}\big)}\hat{J}(\pi(g^{m}))\leq 0.\nnum\EN
		Since $\pi(G_{c})$ is the unique local minimum of $\hat{J}$ then $\hat{\nabla}_{T_{g^{m}}\big(g_{j}\frac{\partial}{\partial g_{i}}\big)}\hat{J}(\pi(g^{m}))=0$ if and only if $g^{m}\in G_{c}$. This yields that $\dot{\hat{J}}$ locally vanishes only at $\pi(G_{c})\in G^{m}/G_{c}$.  By employing the Lyapunov stability results on manifolds, see \cite{Lewis}, $\pi(G_{c})$ is locally asymptotically stable on $G^{m}/G_{c}$ and the proof is complete. 
			\end{proof}
		One may show that asymptotic convergence of the state trajectory of the induced gradient system in the quotient manifold $G^{m}/G_{c}$ results in the asymptotic convergence in $G^{m}$. 
			Consider the curve $\gamma(t)\doteq\pi(\Phi_{X}(t,t_{0},g^{m}))$ on $G^{m}/G_{c}$, where $\Phi_{X}$ is the flow of $X$ on $G^{m}$, see (\ref{flow}). To show the convergence in $G^{m}$ we need to show $\gamma(t)=\hat{\gamma}(t)\doteq\Phi_{\hat{X}}(t,t_{0},\pi(g^{m}))$ in $G^{m}/G_{c}$, where $\Phi_{\hat{X}}$ is the  flow of $\hat{X}$ on $G^{m}/G_{c}$. To this end, it is sufficient to prove both of them are integral flows of the same vector field with the same initial conditions. Obviously both $\gamma$ and $\hat{\gamma}$ initiate from the same initial state $\pi(g^{m})\in G^{m}/G_{c}$ and $\Phi_{\hat{X}}(t,t_{0},\pi(g^{m}))$ is the solution of the vector field $\hat{X}$ on $G^{m}/G_{c}$.  The tangent vector field along $\gamma$ in $G^{m}/G_{c}$ is obtained by 
			\EQ \label{fuck}\dot{\gamma}(t)&=&T_{\Phi_{X}(t,t_{0},g^{m})}\pi X(\Phi_{X}(t,t_{0},g^{m}))\nnum\\&=&\hat{X}(\pi(\Phi_{X}(t,t_{0},g^{m}))),\EN
			where the second equality holds since $\hat{X}$ is the horizontal lift of $X$, see (\ref{hl}).  Equation (\ref{fuck}) shows that $\pi(\Phi_{X}(t,t_{0},g^{m}))$ is the solution of the vector field $\hat{X}$ in $G^{m}/G_{c}$ with initial conditions $\pi(g^{m})\in G^{m}/G_{c}$ and $\hat{X}(\pi(g^{m}))\in T_{\pi(g^{m})}G^{m}/G_{c}$. Hence, by the uniqueness of solutions for flows we have $\gamma(t)=\hat{\gamma}(t)$. As stated by Theorem \ref{st}, if $\pi(g^{m})$ is sufficeintly close to $\pi(G_{c})$, then $\Phi_{\hat{X}}(t,t_{0},\pi(g^{m}))\rightarrow \pi(G_{c})$. Hence, together with  continuity of $\pi$ in the quotient topology, we have $\Phi_{X}(t,t_{0},g^{m})\rightarrow G_{c}$.  This is summarized in the following proposition.  
			\newtheorem{proposition}{Proposition}
			\begin{proposition}
			Consider the initial state $g^{m}\in G^{m}$ such that $\Phi_{\hat{X}}(t,t_{0},\pi(g^{m}))\rightarrow \pi(G_{c})$, where $\hat{X}$ is the induced gradient vector field (\ref{gvf}) and $\Phi_{\hat{X}}$ is its flow as per (\ref{flow}). Then $\Phi_{X_{g}}(t,t_{0},g)\rightarrow G_{c}$, where $X_{g}$ is the gradient vector field (\ref{gvf1}). 
			\halmos\end{proposition}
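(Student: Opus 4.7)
The plan is to observe that the proof essentially reduces to identifying the projected flow on $G^{m}$ with the flow generated by the induced vector field on $G^{m}/G_{c}$, and then transferring the asymptotic convergence in the quotient back to $G^{m}$ via the structure of the projection $\pi$. First I would define the two curves $\gamma(t) \doteq \pi(\Phi_{X_{g}}(t,t_{0},g))$ and $\hat{\gamma}(t) \doteq \Phi_{\hat{X}}(t,t_{0},\pi(g))$ living in $G^{m}/G_{c}$, and verify that they agree at $t=t_{0}$.

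Next I would differentiate $\gamma$ using the chain rule for the pushforward:
\begin{equation*}
\dot{\gamma}(t) = T_{\Phi_{X_{g}}(t,t_{0},g)}\pi\bigl(X_{g}(\Phi_{X_{g}}(t,t_{0},g))\bigr).
\end{equation*}
By Lemma \ref{li}, $X_{g}$ is $G_{c}$-invariant, hence its horizontal lift representative projects unambiguously to $\hat{X}$, which is exactly the relationship recorded in (\ref{hl}) and used in (\ref{fuck}). Therefore $\dot{\gamma}(t) = \hat{X}(\gamma(t))$, so $\gamma$ is an integral curve of $\hat{X}$ with the same initial condition as $\hat{\gamma}$. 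Since $G^{m}/G_{c}$ is a smooth manifold by Lemma \ref{l2} and $\hat{X}$ is a smooth vector field, uniqueness of integral curves yields $\gamma(t) = \hat{\gamma}(t)$ for all $t$.

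The final step is to transport convergence back upstairs. Applying the hypothesis $\Phi_{\hat{X}}(t,t_{0},\pi(g^{m})) \to \pi(G_{c})$ together with $\gamma = \hat{\gamma}$ gives $\pi(\Phi_{X_{g}}(t,t_{0},g)) \to \pi(G_{c})$ in $G^{m}/G_{c}$. Because $G_{c}$ is exactly the fiber $\pi^{-1}(\pi(G_{c}))$ (all synchronized configurations collapse to the same equivalence class), the quotient topology characterization of open neighborhoods of $\pi(G_{c})$ as projections of saturated neighborhoods of $G_{c}$ in $G^{m}$ allows us to conclude $\Phi_{X_{g}}(t,t_{0},g) \to G_{c}$ in the manifold topology of $G^{m}$.

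The main obstacle, in my view, is precisely this last transfer: convergence in a quotient does not automatically imply convergence of a particular lift, and some care is needed to exploit the saturated structure of neighborhoods of $G_{c}$ (equivalently, the fact that $\pi$ is a smooth submersion and $G_{c}$ is a closed embedded Lie subgroup by Lemma \ref{l1}). Everything else is a direct invocation of the invariance established in Lemma \ref{li} and uniqueness of smooth ODE solutions, so the conceptual content sits in the topological step connecting closeness in $G^{m}/G_{c}$ to closeness to $G_{c}$ in $G^{m}$.
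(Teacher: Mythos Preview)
Your proposal is correct and follows essentially the same route as the paper: define $\gamma=\pi\circ\Phi_{X_{g}}$ and $\hat{\gamma}=\Phi_{\hat{X}}\circ\pi$, use the chain rule together with $G_{c}$-invariance of $X_{g}$ to show $\gamma$ is an integral curve of $\hat{X}$ with the same initial data as $\hat{\gamma}$, invoke uniqueness of flows, and then pull the quotient convergence back to $G^{m}$. Your treatment of the final topological step (via saturated neighborhoods and $G_{c}=\pi^{-1}(\pi(G_{c}))$) is in fact more careful than the paper's, which simply appeals to ``continuity of $\pi$ in the quotient topology.''
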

					\subsection{Closeness of solutions on $G^{m}/G_{c}$}
					To analyze the behaviour of the extremum seeking algorithm (\ref{grg1}) on $G^{m}$ we need to study the closeness of solutions of perturbed vector fields on $G^{m}$. As stated by Theorem \ref{st} for sufficiently close initial state $\pi(g^{m})$ the state flow $\Phi_{\hat{X}}(t,t_{0},\pi(g^{m})$ converges to $\pi(G_{c})$. However, the original state trajectory $\Phi_{X}(t,t_{0},g^{m})$ converges to the invariant set $G_{c}$ which is not a single point. To obtain the closeness of solutions for state trajectories of (\ref{grg1}) and its corresponding gradient system in Definition \ref{gs} we study their projected trajectories on $G^{m}/G_{c}$. 
					\begin{lemma}
					\label{l5}
					Consider the synchronization extremum seeking algorithm (\ref{grg1}) on the connected Lie group $G^{m}$ such that $i(G^{m})$ is bounded from below. Then  the averaged vector field of the synchronization extremum seeking algorithm, $X_{a}$,  is $G_{c}$ invariant and there exists a continuous function $\rho:\mathds{R}\rightarrow \mathds{R},\hspace{.2cm}\rho(0)=0$, such that
					\EQ&& \limsup_{t\rightarrow \infty}d(\Phi_{X_{a}}(t,t_{0},g^{m}),G_{c})\leq\nnum\\&& \rho(O(\max_{i\in\{ 1,\cdots,n\},j\in \{1,\cdots,m\}}|a^{j}_{i}|^{4})).\nnum\EN
					\end{lemma}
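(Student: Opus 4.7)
The plan is to split the averaged vector field $X_a$ obtained in Lemma~\ref{l4} into the gradient part plus the $O(\max|a^j_i|^4)$ residual, establish $G_c$-invariance of both pieces, project everything to the quotient $G^m/G_c$, and then invoke a standard robustness argument around the asymptotically stable equilibrium $\pi(G_c)$ guaranteed by Theorem~\ref{st}.

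First, I would dispose of the invariance claim. By Lemma~\ref{l4} (with its proof transcribed verbatim from the Riemannian setting to the Lie group setting via the substitution $\nabla_X J \rightsquigarrow XJ$ indicated in the paragraph after the definition of (\ref{grg1})), the averaged vector field decomposes as
\begin{equation*}
X_a(g^m) = -\sum_{j=1}^m\sum_{i=1}^n \tfrac{1}{2}(a^j_i)^2\,\nabla_{g_j\frac{\partial}{\partial g_i}}J(g^m)\,g_j\tfrac{\partial}{\partial g_i} + P(g^m),
\end{equation*}
where $P$ is a sum of terms, each of which is a product of constants, iterated $\nabla$-derivatives of $J$ along the left-invariant frame $\{g_j\partial/\partial g_i\}$, and one of the left-invariant basis vectors. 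The leading term is $G_c$-invariant by Lemma~\ref{li}. For $P$, the key observation is that \emph{every} higher-order term has the same structural form as the gradient term: a scalar function built from iterated directional derivatives of $J$ along left-invariant vector fields, multiplying a left-invariant vector field. Repeating the calculation (\ref{koonkoon}) inductively on the order of differentiation shows that each such iterated derivative is $G_c$-invariant, and since left-invariant vector fields are automatically $G_c$-invariant in the sense of Definition~\ref{def2}, the whole of $X_a$ is $G_c$-invariant.

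Next I would push $X_a$ down to the quotient. By Lemma~\ref{l2} and the discussion around (\ref{kir}), the $G_c$-invariance of $X_a$ produces a well-defined smooth vector field $\hat X_a = T\pi(X_a)$ on $G^m/G_c$, and by linearity of $T\pi$ we have the decomposition $\hat X_a = \hat X + \hat P$, where $\hat X$ is the gradient vector field studied in Theorem~\ref{st} and $\hat P = T\pi(P)$ satisfies a uniform bound $\|\hat P\|_{g_{G^m/G_c}} \leq C\,\max_{i,j}|a^j_i|^4$ on any compact set. The lower bound on $i(G^m)$ is what makes this bound uniform: it guarantees, as noted in the remark following the proof of Lemma~\ref{l4}, that the compact sets $\overline{\exp_{g_j}B_{i(G^m)}(0)}$ on which the Taylor remainder is controlled cover $G^m$ with a uniform constant.

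The core step is then a classical perturbed-Lyapunov argument on the quotient. By Theorem~\ref{st}, $\hat J = J\circ\pi^{-1}$ is a Lyapunov function on $G^m/G_c$ with $\pi(G_c)$ as its unique local minimum in a neighborhood $U$ and $\dot{\hat J} \leq -c\,\|\operatorname{grad}\hat J\|^2$ along $\hat X$ on $U$. Differentiating $\hat J$ along $\hat X_a$ instead gives
\begin{equation*}
\dot{\hat J} = d\hat J(\hat X) + d\hat J(\hat P) \leq -c\,\|\operatorname{grad}\hat J\|^2 + \|\operatorname{grad}\hat J\|\,\|\hat P\|,
\end{equation*}
which is strictly negative whenever $\|\operatorname{grad}\hat J\| > c^{-1}\|\hat P\| = O(\max_{i,j}|a^j_i|^4)$. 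A standard ultimate-boundedness argument (see, e.g., Khalil's treatment adapted to manifolds) then yields a neighborhood $V \subset U$ of $\pi(G_c)$, with diameter bounded by a continuous function $\rho$ of the perturbation amplitude vanishing at $0$, into which every trajectory of $\hat X_a$ starting in $U$ enters and remains. Finally, since $G_c$ acts by isometries on $(G^m, g_{G^m})$ (Lemma~\ref{l3}), the Riemannian distance on $G^m$ from a point $g^m$ to the orbit $G_c\overline{\star}g_0$ equals the quotient distance from $\pi(g^m)$ to $\pi(g_0)$, so $d(\Phi_{X_a}(t,t_0,g^m),G_c) = d_{G^m/G_c}(\Phi_{\hat X_a}(t,t_0,\pi(g^m)),\pi(G_c))$, and the estimate transfers back to $G^m$ as claimed.

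The main obstacle is the third step: carefully verifying the ultimate-boundedness estimate in the quotient, where $\hat J$ is only locally a Lyapunov function and $G^m/G_c$ need not be compact. This is why the hypothesis that $i(G^m)$ is bounded from below is essential---it provides uniform estimates on the perturbation $\hat P$ and on the sub-level sets of $\hat J$ around $\pi(G_c)$, so that the classical argument goes through and the function $\rho$ can be chosen continuous with $\rho(0)=0$.
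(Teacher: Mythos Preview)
Your plan is correct and follows the same architecture as the paper---show $G_c$-invariance of $X_a$, push down to $G^m/G_c$, apply a perturbation argument around the asymptotically stable point $\pi(G_c)$ from Theorem~\ref{st}, then transfer the bound back to $G^m$---but the execution differs at two points worth noting.

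For invariance, you argue term-by-term on the averaged expansion, iterating (\ref{koonkoon}) to handle the higher-order remainder. The paper instead observes that the \emph{unaveraged} time-varying field $X(g,\tau)$ in (\ref{grg1}) is already $G_c$-invariant at every fixed $\tau$ (since $J$ is $G_c$-invariant and each $g_j\partial/\partial g_i$ is left-invariant), so its time average $X_a=\frac{1}{T}\int_0^T X(\cdot,\tau)\,d\tau$ is automatically $G_c$-invariant. This bypasses any need to inspect the structure of the residual $P$, and is both shorter and more robust.

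For the ultimate-boundedness step on the quotient, the paper simply cites an external result (\cite{Taringoo100}) to obtain the continuous function $\rho$, whereas you sketch the perturbed-Lyapunov inequality directly; your version is more self-contained. Finally, for transferring the estimate back to $G^m$, you invoke the Riemannian-submersion identity $d(p,\pi^{-1}(q))=d_{G^m/G_c}(\pi(p),q)$, while the paper argues via horizontal lifts of curves (Proposition~II.3.1 of \cite{kob}) and radial geodesics in a normal neighborhood of $\pi(G_c)$, obtaining only the inequality $d(\Phi_{X_a},G_c)\le d_{G^m/G_c}(\Phi_{\hat X_a},\pi(G_c))$. Your route is cleaner here; note, though, that it tacitly uses $\pi\circ\Phi_{X_a}=\Phi_{\hat X_a}\circ\pi$, which the paper establishes separately (see (\ref{fuck})) via $\pi$-relatedness of $X_a$ and $\hat X_a$.
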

					
					\begin{proof}
					As shown by Lemma \ref{l4} the averaged vector field is the perturbation of the gradient system defined in Definition \ref{gs}. Let us denote the time varying synchronization vector field in (\ref{grg1}) by $X(g,t)$. Since $J$ is $G_{c}$ invariant and $g\frac{\partial}{\partial g_{i}}$ are left invariant then it is immediate that for each $t$, $T_{g}g_{c}X(g,t)=X(g_{c}\overline{\star}g,t),\hspace{.2cm}g_{c}\in G_{c}$.  Since $X$ is $T$ periodic then $\frac{1}{T}\int^{T}_{0}X(g,\tau)d\tau$ is also $G_{c}$ invariant. By the results of Lemma \ref{l4} we have
					\EQ \hspace{-.5cm}X_{a}(g^{m})&=&-\sum^{m}_{j=1}\sum^{n}_{i=1}\frac{1}{2}a^{j^{2}}_{i}\nabla_{g_{j}\frac{\partial}{\partial g_{i}}}J(g^{m})g_{j}\frac{\partial}{\partial g_{i}}+\nnum\\&&\sum^{m}_{j=1}\sum^{n}_{i=1}O(\max_{i\in\{ 1,\cdots,n\},j\in \{1,\cdots,m\}}|a^{j}_{i}|^{4})g_{j}\frac{\partial}{\partial g_{i}}.\nnum\EN
					Hence, the induced vector field on $G^{m}/G_{c}$ is given by
					\EQ &&\hat{X}_{a}(\pi(g^{m}))=T_{g^{m}}\pi(X_{a}(g^{m}))=\nnum\\&&-T_{g^{m}}\pi\left(\sum^{m}_{j=1}\sum^{n}_{i=1}\frac{1}{2}a^{j^{2}}_{i}\nabla_{g_{j}\frac{\partial}{\partial g_{i}}}J(g^{m})g_{j}\frac{\partial}{\partial g_{i}}\right)+\nnum\\&&T_{g^{m}}\pi\left(\sum^{m}_{j=1}\sum^{n}_{i=1}O(\max_{i\in\{ 1,\cdots,n\},j\in \{1,\cdots,m\}}|a^{j}_{i}|^{4})g_{j}\frac{\partial}{\partial g_{i}}\right).\nnum\EN
					As shown by Theorem \ref{st}, the induced vector field of the gradient system is locally asymptotic stable around $\pi(G_{c})$. Hence, $\hat{X}$ is a perturbation of an asymptotic stable vector field on $G^{m}/G_{c}$. By employing the results of \cite{Taringoo100}, there exists a continuous function $\rho:\mathds{R}\rightarrow \mathds{R}, \hspace{.2cm}\rho(0)=0$, such that 
					\EQ \label{koonkash}&&\limsup_{t\rightarrow \infty}d\left(\Phi_{\hat{X}_{a}}(t,t_{0},\pi(g^{m})),\pi(G_{c})\right)\leq\nnum\\&& \rho(O(\max_{i\in\{ 1,\cdots,n\},j\in \{1,\cdots,m\}}|a^{j}_{i}|^{4})). \EN
					Connectedness of $G^{m}/G_{c}$ implies that there exists a piecewise smooth $\hat{\gamma}:[0,1]\rightarrow G^{m}/G_{c}$ such that $\hat{\gamma}(1)=\pi(G_{c})$ and $\hat{\gamma}(0)=\Phi_{\hat{X}_{a}}(t,t_{0},\pi(g^{m}))$. Results of \cite{kob}, Proposition II,3.1 yields the existence of  the unique horizontal lift of $\hat{\gamma}(\cdot)$ denoted by $\gamma(\cdot)\in G^{m}$, such that $T_{\gamma(t)}\pi\dot{\gamma}(t)=\dot{\hat{\gamma}}(t)$ and $\pi(\gamma(t))=\hat{\gamma}(t)$. Since $\gamma(\cdot)$ is a horizontal of $\hat{\gamma}(\cdot)$ then $\ell(\gamma)=\int^{1}_{0}g^{\frac{1}{2}}_{G^{m}}(\dot{\gamma}(\tau),\dot{\gamma}(\tau))d\tau=\int^{1}_{0}g^{\frac{1}{2}}_{G^{m}/G_{c}}(\dot{\hat{\gamma}}(\tau),\dot{\hat{\gamma}}(\tau))d\tau=\ell(\hat{\gamma})$. Therefore,
					\EQ d(\Phi_{X_{a}}(t,t_{0},g^{m}),G_{c})\leq d(\Phi_{X_{a}}(t,t_{0},g^{m}),\gamma(1))= \ell(\gamma),\nnum \EN
					where $d(\Phi_{X}(t,t_{0},g^{m}),G_{c})=\inf_{g_{c}\in G_{c}}d(\Phi_{X}(t,t_{0},g^{m}),g_{c})$ and $\gamma(1)\in G_{c}$.
					By the Riemannian structure of $G^{m}/G_{c}$ and continuity of $\rho$, select $a^{j}_{i}$ sufficiently small such that $\rho(O(\max_{i\in\{ 1,\cdots,n\},j\in \{1,\cdots,m\}}|a^{j}_{i}|^{4}))<i(\pi(G_{c}))$, where $i(G_{c})$ is the injectivity radius at $\pi(G_{c})$ in $G^{m}/G_{c}$. One may choose $\hat{\gamma}$ as the radial geodesic in a normal neighbourhood of $\pi(G_{c})$, see \cite{Lee3}. This implies that $d(\Phi_{\hat{X}_{a}}(t,t_{0},\pi(g^{m})),\pi(G_{c}))=\ell(\hat{\gamma})$. Hence, by (\ref{koonkash})
					\EQ && \limsup_{t\rightarrow \infty}d(\Phi_{X_{a}}(t,t_{0},g^{m}),G_{c})\leq\nnum\\&& \rho(O(\max_{i\in\{ 1,\cdots,n\},j\in \{1,\cdots,m\}}|a^{j}_{i}|^{4})),\nnum\EN
		which completes the proof.			
					\end{proof}
					The next theorem is the main result of this paper which gives closeness of solutions for state trajectories of dynamical systems on $G^{m}$. 
					\begin{theorem}
	\label{t1}
  Consider the synchronization extremum seeking system given in (\ref{grg1}) on  $G^{m}$. 
  Subject to Assumption \ref{as}, for any neighbourhood $U_{\pi(G_{c})}\subset G^{m}/G_{c}$ of $\pi(G_{c})$ on $G^{m}/G_{c}$, there exist a neighborhood $\hat{U}_{\pi(G_{c})}\subset G^{m}/G_{c}$ of $\pi(G_{c})$ such that for any $g^{m}_{0}\in G^{m},\hspace{.2cm}\pi(g^{m}_{0})\in \hat{U}_{\pi(G_{c})}$ there exist sufficiently small parameters $a^{j}_{i},\hspace{.2cm}i=1,\cdots,n, j=1,\cdots,m$ and sufficiently large frequency $\omega$, where the projected state trajectory of the closed loop system in (\ref{grg1}) on $G^{m}/G_{c}$ ultimately enters and remains in $U_{\pi(G_{c})}$.
  \end{theorem}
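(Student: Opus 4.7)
The plan is to transfer the entire problem to the quotient manifold $G^{m}/G_{c}$, where $\pi(G_{c})$ is a genuine isolated equilibrium of the relevant limiting dynamics, and then stack three approximation results on top of one another: averaging (to handle the high-frequency dither), higher-order perturbation (handled by Lemma \ref{l5}), and Lyapunov asymptotic stability (provided by Theorem \ref{st}).

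First, I would verify that the time-varying vector field in (\ref{grg1}) projects to a well-defined time-varying vector field $\hat{X}(\cdot,t)$ on $G^{m}/G_{c}$. The argument mirrors Lemma \ref{li}: the cost $J$ is $G_{c}$-invariant (Assumption \ref{as}), the left-invariant vector fields $g_{j}\tfrac{\partial}{\partial g_{i}}$ are already invariant under the $G_{c}$-action (since $G_{c}$ acts by left translation), and $\exp$ perturbations along these directions commute appropriately with the action. Hence $T\pi$ applied to (\ref{grg1}) descends to $G^{m}/G_{c}$, and I would work with $\Phi_{\hat{X}(\cdot,t)}$ throughout.

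Second, I would invoke a standard averaging theorem on manifolds (expressed locally in the normal neighborhoods provided by Lemma \ref{eun}, whose radii are uniformly bounded below on a compact set containing the region of interest by Lemma \ref{kl}). Under Assumption \ref{as}(i) the projected vector field $\hat{X}(\cdot,t)$ is $T$-periodic with $T=2\pi/\omega$, so classical first-order averaging yields that on any compact set, the solution of the projected time-varying system and the solution of its average $\hat{X}_{a}$ differ by $O(1/\omega)$ uniformly on time intervals of order $1/\omega$, and remain $O(1/\omega)$-close over the semi-infinite time axis provided the averaged system has a locally exponentially (or at least locally asymptotically, with the refinement needed below) stable attractor. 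The averaged field on the quotient is exactly the projection of the averaged field of Lemma \ref{l4}.

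Third, I would apply Lemma \ref{l5} to bound $\limsup_{t\to\infty} d(\Phi_{\hat{X}_{a}}(t,t_{0},\pi(g^{m})),\pi(G_{c})) \leq \rho(O(\max_{i,j}|a^{j}_{i}|^{4}))$, and Theorem \ref{st} to get a neighborhood $\hat{U}_{\pi(G_{c})}$ of $\pi(G_{c})$ that lies in the basin of attraction of the induced gradient system. Given an arbitrary target neighborhood $U_{\pi(G_{c})}$, I choose the amplitudes $a^{j}_{i}$ small enough that the $\rho$-ball and the $O(1/\omega)$-averaging ball both fit strictly inside $U_{\pi(G_{c})}$, and then choose $\omega$ large enough for the averaging approximation to give this tolerance uniformly over the forward orbit starting from $\hat{U}_{\pi(G_{c})}$. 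A triangle inequality in the quotient metric then yields that the projected trajectory of (\ref{grg1}) ultimately enters and remains in $U_{\pi(G_{c})}$.

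The main obstacle will be the averaging step: classical averaging theorems are Euclidean, and here the dynamics live on a Lie group while we also need the approximation to persist on $[t_{0},\infty)$ rather than only on finite time intervals. I would address this by pulling the dynamics into left-translated exponential coordinates at each base point of a finite chart cover of a compact $G_{c}$-invariant tube around $\pi^{-1}(\hat{U}_{\pi(G_{c})})$, applying Euclidean averaging chart-by-chart, and using the quasi-asymptotic-stability of $\pi(G_{c})$ for $\hat{X}_{a}$ together with invariance of the tube under the flow to extend the estimate to all $t \geq t_{0}$, exactly as in the perturbation-of-asymptotically-stable-systems results cited for Lemma \ref{l5}.
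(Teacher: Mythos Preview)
Your high-level strategy matches the paper's: project to the quotient $G^{m}/G_{c}$, replace the time-varying field $\hat X(\cdot,t)$ by its time-average $\hat X_{a}$, use Theorem~\ref{st} and Lemma~\ref{l5} to control the averaged flow, and assemble the pieces with the triangle inequality in the quotient metric.

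Where your argument is loose, and where the paper's proof is substantially more careful, is the semi-infinite averaging step. You invoke ``classical first-order averaging'' and claim the true and averaged trajectories stay $O(1/\omega)$-close for all $t\ge t_{0}$ ``provided the averaged system has a locally exponentially (or at least locally asymptotically) stable attractor.'' But $\pi(G_{c})$ is \emph{not} an equilibrium of $\hat X_{a}$: Lemma~\ref{l4} shows $\hat X_{a}=\hat X_{g}+O(\max|a^{j}_{i}|^{4})$, so $\pi(G_{c})$ is only an ultimate-boundedness set for $\hat X_{a}$ (Lemma~\ref{l5}), not an attractor in the sense the black-box infinite-time averaging theorems require. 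Your hedge (``with the refinement needed below'') never materializes into an actual argument. A second, smaller issue: the ``compact $G_{c}$-invariant tube around $\pi^{-1}(\hat U_{\pi(G_{c})})$'' on which you want a finite chart cover need not be compact when $G_{c}$ is non-compact; you should do the chart work directly on $G^{m}/G_{c}$.

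The paper avoids both difficulties by constructing an explicit near-identity change of variables on the quotient: it introduces the $T$-periodic field $Z(t,g)=\int_{0}^{t}(\hat X_{a}(g)-\hat X(g,\tau))\,d\tau$ and the composed trajectory $z(\tau)=\Phi_{\frac{1}{\omega}Z}^{(1,0)}\circ\Phi_{\frac{1}{\omega}\hat X}(\tau,\tau_{0},g_{0})$, and shows $z$ solves $\dot z=\tfrac{1}{\omega}\big(\hat X_{a}(z)+\tfrac{1}{\omega}h(z,\zeta,\tau)\big)$. Now all three perturbations of $\hat X_{g}$ (the $O(a^{4})$ term and the $O(1/\omega)$ term) sit on the same footing, and a single Lyapunov argument with the sublevel sets $\mathcal N_{b}(\pi(G_{c}))$ of $\hat J$ traps $z$ (and hence the original trajectory, since $d(\Phi_{\frac{1}{\omega}\hat X},z)=O(1/\omega)$ by a direct length estimate) in a compact set for all forward time. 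The perturbation result cited as \cite{Taringoo100} then gives ultimate boundedness of $z$ near $\pi(G_{c})$, and the final triangle-inequality assembly is exactly as you describe. In short, the paper replaces your appeal to off-the-shelf infinite-time averaging by a concrete conjugation that reduces everything to a single perturbation-of-an-asymptotically-stable-equilibrium problem; you should either supply the missing ``refinement'' explicitly or adopt this construction.
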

  
  \begin{proof}
	We analyze the closeness of solutions between state trajectories of  (\ref{grg1}) and the state trajectory of the gradient system on the quotient manifold $G^{m}/G_{c}$. 
	As stated in Assumption \ref{as}, the geodesic dithers frequencies are $\omega^{j}_{i}=\omega\bar{\omega}^{j}_{i},\hspace{.2cm}j\in(1,\cdots,m),i\in(1,\cdots,n)$. In the time scale $\tau=\omega t$ we have 
	\EQ \label{vagintala}&&\hspace{-.5cm}\frac{d g_{j}}{d\tau}=-\frac{1}{\omega}\sum^{n}_{i=1}a^{j}_{i}\sin(\bar{\omega}_{i}\tau)\times \nnum\\&&J(\cdots,g_{j}\star\exp(\sum^{n}_{i=1}a^{j}_{i}\sin(\bar{\omega}^{j}_{i}\tau)\frac{\partial}{\partial g_{i}}),\cdots)g_{j}\frac{\partial}{\partial g_{i}}\nnum\\&&\doteq \frac{1}{\omega}X(\tau,g^{m})\in T_{g^{m}}G^{m},\hspace{.2cm}j=1,\cdots,m.\EN 
	By the results of Lemma \ref{l4} the averaged dynamical system on $G^{m}$ is given by
	\EQ\label{vagin}\frac{dg^{m}}{d\tau}&=&-\frac{1}{\omega}\sum^{m}_{j=1}\sum^{n}_{i=1}\frac{1}{2}a^{j^{2}}_{i}\nabla_{g_{j}\frac{\partial}{\partial g_{i}}}J(g^{m})g_{j}\frac{\partial}{\partial g_{i}}+\nnum\\&&\frac{1}{\omega}\sum^{m}_{j=1}\sum^{n}_{i=1}O(\max_{i\in\{ 1,\cdots,n\},j\in \{1,\cdots,m\}}|a^{j}_{i}|^{4}))g_{j}\frac{\partial}{\partial g_{i}}\nnum\\&\doteq& \frac{1}{\omega}X_{a}(g^{m})\in T_{g^{m}}G^{m},\nnum\\\EN
	which is in a form of a perturbation of the gradient vector field $-\frac{1}{\omega}\sum^{m}_{j=1}\sum^{n}_{i=1}\frac{1}{2}a^{j^{2}}_{i}\nabla_{g_{j}\frac{\partial}{\partial g_{i}}}J(g^{m})g_{j}\frac{\partial}{\partial g_{i}}\doteq \frac{1}{\omega}X_{g}(g^{m})$ on $G^{m}$. 
	As stated in the proof of Lemma \ref{l5}, the synchronization extremum seeking system (\ref{vagintala}) is left invariant with respect to $G_{c}$ and consequently induces time varying vector field $\hat{X}$, time invariant averaged vector field $\hat{X}_{a}$ and the induced gradient vector field $\hat{X}_{g}$ on $G^{m}/G_{c}$. Hence, we analyze closeness of solutions among the state trajectories of $\frac{1}{\omega}\hat{X},\frac{1}{\omega}\hat{X}_{a}$ and $\frac{1}{\omega}\hat{X}_{g}$ on $G^{m}/G_{c}$.

	 Consider the periodic vector field $Z(t,x)\doteq\int^{t}_{0}(\hat{X}_{a}(g)-\hat{X}(g,\tau))d\tau,\hspace{.2cm}g\in G^{m}/G_{c},\lambda\in\mathds{R}_{\geq 0}$,
 where $Z(t,g)=Z(t+T,g)$. Now consider a composition of flows on $G^{m}/G_{c}$ given by
\EQ z(\tau)&=&\Phi^{(1,0)}_{\frac{1}{\omega} Z}\circ \Phi_{\frac{1}{\omega} \hat{X}}(\tau,\tau_{0},g_{0})\nnum\\&\doteq& \Phi_{\frac{1}{\omega} Z}(1,0,\Phi_{\frac{1}{\omega} \hat{X}}(\tau,\tau_{0},g_{0})).\nnum\EN
The tangent vector of $z$ is computed by
\EQ \label{rrr}\dot{z}(\tau)&=&T_{\Phi_{\frac{1}{\omega} \hat{X}}(\tau,\tau_{0},g_{0})}\Phi_{\frac{1}{\omega} Z}^{(1,0)}\Big(\frac{1}{\omega} \hat{X}(\Phi_{\frac{1}{\omega} \hat{X}}(\tau,\tau_{0},g_{0}),\tau)\Big)\nnum\\&&+\frac{\partial}{\partial \tau}\big(\Phi_{\frac{1}{\omega} Z}^{(1,0)}\circ \Phi_{\frac{1}{\omega} \hat{X}}(\tau,\tau_{0},g_{0})\big)\nnum\\&=&(\Phi^{-1})_{\frac{1}{\omega} Z}^{(1,0)^{*}}\Big(\frac{1}{\omega} \hat{X}(\cdot,\tau)\Big)(z(\tau))\nnum\\&&+\frac{1}{\omega}\int^{1}_{0}(\Phi^{-1})^{(1,s)^{*}}_{\frac{1}{\omega} Z}\big(\hat{X}_{a}(\cdot)-\hat{X}(\cdot,\tau)\big)ds\circ z(\tau),\nnum\\\EN 
where $(\Phi^{-1})^{(1,s)^{*}}_{\frac{1}{\omega} Z}$ is the pullback of the state flow $\Phi^{-1}_{\frac{1}{\omega} Z}$ and $\epsilon=\frac{1}{\omega}$. See \cite{Lewis, Agra} for the definition of pullbacks along diffeomorphisms.
Equivalently, in a compact form, we have
\EQ \label{kk1}\dot{z}(\tau)&=&\frac{1}{\omega}\Big[(\Phi^{-1})_{\frac{1}{\omega} Z}^{(1,0)^{*}} \hat{X}\nnum\\&&+\int^{1}_{0}(\Phi^{-1})^{(1,s)^{*}}_{\frac{1}{\omega} Z}\big(\hat{X}_{a}-\hat{X}\big)ds\Big]\circ z(\tau)\nnum\\&\doteq&\frac{1}{\omega} H(\frac{1}{\omega},\tau,z(\tau)). \EN
One can see that $H(0,\tau,x)=\hat{f}(x)$ where by the construction above, $H$ is smooth with respect to $\frac{1}{\omega}$. By applying the Taylor expansion with remainder we have
\EQ H(\frac{1}{\omega},\tau,x)=\hat{X}_{a}(g)+\frac{1}{\omega} h(g,\zeta,\tau),\nnum\EN 
where $h(g,\zeta,\tau)=\frac{\partial }{\partial \frac{1}{\omega}}H(\frac{1}{\omega},\tau,g)|_{\frac{1}{\omega}=\zeta}$ and $\zeta\in[0,\frac{1}{\omega}]$.
We note that $H(\frac{1}{\omega},\tau,g)$ is periodic with respect to $\tau$ since $\hat{X}(g,\tau)$ and $Z(\tau,g)$ are both T-periodic. Hence, $h(g,\zeta,\tau)$ is a T-periodic vector field on $M$. 

The metric triangle inequality on $G^{m}/G_{c}$ implies
\EQ \label{kirkir}&&d(\Phi_{\frac{1}{\omega} \hat{X}}(\tau,\tau_{0},g_{0}), \Phi_{\frac{1}{\omega} \hat{X}_{a}}(\tau,\tau_{0},g_{0}))\leq\nnum\\&& d(\Phi_{\frac{1}{\omega} \hat{X}}(\tau,\tau_{0},g_{0}),\Phi_{\frac{1}{\omega} Z}^{(1,0)}\circ \Phi_{\frac{1}{\omega} \hat{X}}(\tau,\tau_{0},g_{0}))\nnum\\&&+ d(\Phi_{\frac{1}{\omega} Z}^{(1,0)}\circ \Phi_{\frac{1}{\omega} \hat{X}}(\tau,\tau_{0},g_{0}),\Phi_{\frac{1}{\omega} \hat{X}_{a}}(\tau,\tau_{0},g_{0}))\leq \nnum\\&&
d(\Phi_{\frac{1}{\omega} \hat{X}}(\tau,\tau_{0},g_{0}),\Phi_{\frac{1}{\omega} Z}^{(1,0)}\circ \Phi_{\frac{1}{\omega} \hat{X}}(\tau,\tau_{0},g_{0}))+\nnum\\&& d(\Phi_{\frac{1}{\omega} Z}^{(1,0)}\circ \Phi_{\frac{1}{\omega} \hat{X}}(\tau,\tau_{0},x_{0}),\pi(G_{c}))+\nnum\\&&d(\Phi_{\frac{1}{\omega} \hat{X}_{a}}(\tau,\tau_{0},g_{0}),\pi(G_{c})).\EN

Based on (\ref{kirkir}), We analyze the closeness of solutions for the following dynamics on $G^{m}/G_{c}$.
\EQ \label{kirekhar}&&\hspace{-.5cm}\frac{dg}{dt}=\hat{X}_{g}\left(g\right),\nnum\\&&\hspace{-.5cm}\frac{dg}{dt}=\hat{X}_{g}\left(g\right)+T_{g^{m}}\pi\left(\sum^{m}_{j=1}\sum^{n}_{i=1}O((\max_{j\in1,\cdots,m, i\in1,\cdots,n}a^{j}_{i})^{4})\times\right. \nnum\\&&\hspace{-0cm}\left. g_{j}\frac{\partial}{\partial g_{i}}\right),\nnum\\&&\hspace{-.5cm}\frac{dg}{dt}= \hat{X}_{g}\left(g\right)+T_{g^{m}}\pi\left(\sum^{m}_{j=1}\sum^{n}_{i=1}O((\max_{j\in1,\cdots,m, i\in1,\cdots,n}a^{j}_{i})^{4})\times \right.\nnum\\&&\left. g_{j}\frac{\partial}{\partial g_{i}}\right)+\frac{1}{\omega} h(g,\zeta,t),\EN
where $g(t_{0})=g_{0}$ and $g=\pi(g^{m})\in G^{m}/G_{c}$.

The variation of the induced cost function $\hat{J}$ along $\hat{X}_{a}=\hat{X}_{g}+T_{g^{m}}\pi\left(\sum^{m}_{j=1}\sum^{n}_{i=1}O((\max_{j\in1,\cdots,m, i\in1,\cdots,n}a^{j}_{i})^{4}) g_{j}\frac{\partial}{\partial g_{i}}\right)$ is given by\\
$\mathcal{L}_{\hat{X}_{g}}\hat{J}+\mathcal{L}_{T_{g^{m}}\pi\left(\sum^{m}_{j=1}\sum^{n}_{i=1}O((\max_{j\in1,\cdots,m, i\in1,\cdots,n}a^{j}_{i})^{4}) g_{j}\frac{\partial}{\partial g_{i}}\right)}\hat{J}$, where by the results of Theorem \ref{st} we have $\mathcal{L}_{\hat{X}_{g}}\hat{J}\leq 0$.
Without loss of generality, assume positive definiteness and negative semi definiteness of $\hat{J}$ and  $\mathcal{L}_{\hat{X}_{g}}\hat{J}$ are both obtained on the same neighbourhood on $G^{m}/G_{c}$. Otherwise we apply the intersection of the corresponding neighborhoods to perform the analysis above.
The sublevel set $\mathcal{N}_{b}$ of the cost  function $\hat{J}:G^{m}/G_{c}\rightarrow\mathds{R}_{\geq 0}$ on $G^{m}/G_{c}$ is defined by $\mathcal{N}_{b}\doteq\{g\in G^{m}/G_{c}, \hspace{.2cm} \hat{J}(g)\leq b\}$. By $\mathcal{N}_{b}(g^{*})$ we denote a connected sublevel set of $G^{m}/G_{c}$ containing $g^{*}\in G^{m}/G_{c}$.

By Lemma 6.12 in \cite{Lewis}, there exists a compact subslevel set $\mathcal{N}_{b}(\pi(G_{c}))\subset U_{\pi(G_{c})}$, such that $\mathcal{N}_{b}(\pi(G_{c}))$ is compact.
Consider a neighborhood $W_{\pi(G_{c})}\subset \mathcal{N}_{b}(\pi(G_{c}))\subset U_{\pi(G_{c})}$. 
The set $\mathcal{N}_{b}(\pi(G_{c}))-W_{\pi(G_{c})}=\mathcal{N}_{b}(\pi(G_{c}))\bigcap W^{c}_{\pi(G_{c})}$ is compact since $W^{c}_{\pi(G_{c})}$ is closed and $\mathcal{N}_{b}(\pi(G_{c}))\bigcap W^{c}_{\pi(G_{c})}\subset \mathcal{N}_{b}(\pi(G_{c}))$ is a closed subset of the compact set $\mathcal{N}_{b}(\pi(G_{c}))$, which is consequently compact.  

Compactness of $\mathcal{N}_{b}(\pi(G_{c}))-W_{\pi(G_{c})}$ and continuity of the perturbed vector field $T_{g^{m}}\pi\left(\sum^{m}_{j=1}\sum^{n}_{i=1}O((\max_{j\in1,\cdots,m, i\in1,\cdots,n}a^{j}_{i})^{4}) g_{j}\frac{\partial}{\partial g_{i}}\right)$ on $G^{m}/G_{c}$ together imply that  by selecting $a^{j}_{i},\hspace{.2cm}j=1,\cdots,m, i=1,\cdots,n$ sufficiently small we have $\mathcal{L}_{\hat{X}_{a}}\hat{J}<0$ on $\mathcal{N}_{b}(\pi(G_{c}))-W_{\pi(G_{c})}$. This implies that the state trajectory $g(\cdot)$ initiating inside $\mathcal{N}_{b}(\pi(G_{c}))$ remains in $\mathcal{N}_{b}(\pi(G_{c}))$. 


The variation of $\hat{J}$ along $ \hat{X}_{a}\left(g\right)+\frac{1}{\omega} h(g,\zeta,t)$ is given by 

\EQ\label{soori}&& \mathcal{L}_{\hat{X}_{a}+\frac{1}{\omega} h(g,\zeta,\omega)}\hat{J}=\mathcal{L}_{\hat{X}_{a}}\hat{J}+\frac{1}{\omega} \mathcal{L}_{h(g,\zeta,t)}\hat{J}=\mathcal{L}_{\hat{X}_{g}}\hat{J}+\nnum\\&&\mathcal{L}_{T_{g^{m}}\pi\left(\sum^{m}_{j=1}\sum^{n}_{i=1}O((\max_{j\in1,\cdots,m, i\in1,\cdots,n}a^{j}_{i})^{4}) g_{j}\frac{\partial}{\partial g_{i}}\right)}\hat{J}\nnum\\&&+\frac{1}{\omega} \mathcal{L}_{h(g,\zeta,t)}\hat{J}.\EN
The same argument applies  to the variation of $\hat{J}$ along $\hat{X}_{a}\left(g\right)+\frac{1}{\omega} h(g,\zeta,t)$ and  for sufficiently small $a^{j}_{i}$ and sufficiently large $\omega$ the state trajectory of $\frac{dg}{dt}= \hat{X}_{a}\left(g\right)+\frac{1}{\omega} h(g,\zeta,t)$ remains bounded in $\mathcal{N}_{b}(\pi(G_{c}))$.

Denote the uniform normal neighborhood of $\pi(G_{c})\in G^{m}/G_{c}$ with respect to $U_{\pi(G_{c})}$  by $U^{n}_{\pi(G_{c})}$ (its existence is guaranteed by Lemma 5.12 in \cite{Lee3}). Consider a  geodesic ball of radius $\delta$ where $U^{n}_{\pi(G_{c})}\subset \exp_{\pi(G_{c})}(B_{\delta}(0)) $.   By definition, $\exp_{\pi(G_{c})}(B_{\delta}(0))$ is an open set containing $\pi(G_{c})$ in the topology of $G^{m}/G_{c}$. Therefore  one can shrink $b$ to $\acute{b}, 0<\acute{b}\leq b,$  such that  $\mathcal{N}_{\acute{b}}(\pi(G_{c}))\subset \exp_{\pi(G_{c})}(B_{\delta}(0))$. Hence, we can select the set of initial state such as $\Phi_{\hat{X}_{a}+\frac{1}{\omega}h}(\cdot,t_{0},g_{0})$ stays in a normal neighborhood of $\pi(G_{c})$. Hence, without loss of generality we assume $\mathcal{N}_{b}(\pi(G_{c}))\subset \exp_{\pi(G_{c})}(B_{\delta}(0))$.

Therefore, by employing the results of \cite{Taringoo100}, there exist  a neighborhood $U^{1}_{\pi(G_{c})}\subset int(\mathcal{N}_{b}(\pi(G_{c}))$ and a continuous function $\rho$, such that 
 \EQ \label{fuck1}&&\limsup_{t\rightarrow \infty} d(\Phi_{\hat{X}_{a}+\frac{1}{\omega}h}(t,t_{0},g_{0}),\pi(G_{c}))\leq\nnum\\&&\rho(||T_{g^{m}}\pi\left(\sum^{m}_{j=1}\sum^{n}_{i=1}O((\max_{j\in1,\cdots,m, i\in1,\cdots,n}a^{j}_{i})^{4}) g_{j}\frac{\partial}{\partial g_{i}}\right)+\nnum\\&&\frac{1}{\omega}h(g,\zeta,t)||_{g}),\hspace{.2cm} g_{0}\in U^{1}_{\pi(G_{c})},\nnum\\\EN
 where $\rho$ is a continuous function which crosses the origin. Note that (\ref{fuck1}) does not guarantee the convergence of the perturbed state trajectory to $\pi(G_{c})$. However, it gives a local closeness of solutions in terms of the Riemannian distance function $d$ to $\pi(G_{c})$ after elapsing enough time. 
 
 By employing the triangle inequality we have 
\EQ \label{kirkir2}&&d(\Phi_{\hat{X}_{a}+\frac{1}{\omega}h}(t,t_{0},g_{0}),\Phi_{\hat{X}_{a}}(t,t_{0},g_{0}))\leq\nnum\\&& d(\Phi_{ \hat{X}_{a}+\frac{1}{\omega}h}(t,t_{0},g_{0}),\pi(G_{c}))+d(\pi(G_{c}),\Phi_{\hat{X}_{a}}(t,t_{0},g_{0})),\nnum\\\EN
 where in  (\ref{kirkir2}), $d(\pi(G_{c}),\Phi_{\hat{X}_{a}}(t,t_{0},g_{0}))$ is ultimately bounded by Lemma \ref{l5} and  $d(\Phi_{ \hat{X}_{a}+\frac{1}{\omega}h}(t,t_{0},g_{0}),\pi(G_{c}))$ can be chosen arbitrarily small by (\ref{fuck1}).  
	In order to show the closeness of trajectories $\Phi_{ \hat{X}}(t,t_{0},g_{0})$ and $\Phi_{\hat{X}_{a}+\frac{1}{\omega}h}(t,t_{0},g_{0})$ in terms of  $d\left(\Phi_{ \hat{X}}(t,t_{0},g_{0}),\Phi_{\hat{X}_{a}+\frac{1}{\omega}h}(t,t_{0},g_{0})\right)$, we switch back to the time scale $\tau$. 
To this end, we prove $d(\Phi_{\frac{1}{\omega} \hat{X}}(\tau,\tau_{0},g_{0}),\Phi_{\frac{1}{\omega} Z}^{(1,0)}\circ \Phi_{\frac{1}{\omega} \hat{X}}(\tau,\tau_{0},g_{0}))=O(\frac{1}{\omega})$. Note that $\Phi_{\frac{1}{\omega}\hat{X}_{a}+\frac{1}{\omega^{2}}h}(\tau,\tau_{0},g_{0}))=\Phi_{\frac{1}{\omega} Z}^{(1,0)}\circ \Phi_{\frac{1}{\omega} \hat{X}}(\tau,\tau_{0},g_{0})$.
First we show that $\Phi_{\frac{1}{\omega} \hat{X}}(\cdot,\tau_{0},g_{0})$ remains in a compact subset of $G^{m}/G_{c}$ provided $g_{0}\in int(\mathcal{N}_{b}(\pi(G_{c})))$. As demonstrated by (\ref{soori}) by selecting $a^{j}_{i}$ sufficiently small and $\omega$ sufficiently large, there exists $W_{\pi(G_{c})}$ such that for $g_{0}\in W_{\pi(G_{c})}$ the state trajectory $\Phi_{ \hat{X}_{a}+\frac{1}{\omega}h}(t,t_{0},g_{0})$ remains in the compact set $\mathcal{N}_{b}(\pi(G_{c})))$. Hence, $\Phi_{\frac{1}{\omega}\hat{X}_{a}+\frac{1}{\omega^{2}}h}(\tau,\tau_{0},g_{0}))$ remains in $\mathcal{N}_{b}(\pi(G_{c})))$ since it is the same trajectory in $\tau$ scale. Consequently we have 
\EQ \bigcup_{\tau\in[\tau_{0},\infty)}\Phi_{\frac{1}{\omega} \hat{X}}(\tau,\tau_{0},g_{0})&\subset& \bigcup_{\tau\in[\tau_{0},\infty)}\Phi_{\frac{1}{\omega} Z}^{(1,0)}\circ \mathcal{N}_{b}(\pi(G_{c})))\nnum\\&=&\bigcup_{\tau\in[\tau_{0},\omega T]}\Phi_{\frac{1}{\omega} Z}^{(1,0)}\circ \mathcal{N}_{b}(\pi(G_{c}))),\nnum \EN
where the equality is due to the periodicity of $Z$. This proves that for all $g_{0}\in W_{\pi(G_{c})}$ the state trajectory $\Phi_{\frac{1}{\omega} \hat{X}}(\tau,\tau_{0},g_{0})$ is trapped in the compact set $\bigcup_{\tau\in[\tau_{0},\omega T]}\Phi_{\frac{1}{\omega} Z}^{(1,0)}\circ \mathcal{N}_{b}(\pi(G_{c})))$.

By the definition of the distance function given in (\ref{l}), we have  $d(\Phi_{\frac{1}{\omega}Z}(s,0,g),g)\leq \ell(\Phi_{\frac{1}{\omega} Z}(s,0,g),g),$ where $\ell(\Phi_{\frac{1}{\omega} Z}(s,0,g))$ is the length of the curve connecting $g$ to $\Phi_{\frac{1}{\omega} Z}(s,0,g)$ on $G^{m}/G_{c}$. Therefore,
\EQ \label{kirkoon}&&d(\Phi_{\frac{1}{\omega} Z}(1,0,g),g)\leq \nnum\\&&\ell(\Phi_{\frac{1}{\omega} Z}(1,0,g),g)= \frac{1}{\omega}\int^{1}_{0}||Z(\lambda,\Phi_{\frac{1}{\omega}Z}(s,0,g))||_{g}ds.\nnum\\\EN

Periodicity of $Z$ with respect to $\lambda$, boundedness of $\Phi_{\frac{1}{\omega}Z}(s,0,g),\hspace{.2cm}s\in[0,1]$ in the sense of compactness of $\bigcup_{\tau\in[\tau_{0},\omega T]}\Phi_{\frac{1}{\omega} Z}^{(1,0)}\circ \mathcal{N}_{b}(\pi(G_{c})))$ and smoothness of $Z$ with respect to $g$ together yield $d(\Phi_{\frac{1}{\omega} Z}(1,0,g),g)=O(\frac{1}{\omega}),\hspace{.2cm}g\in \mathcal{N}_{b}(\pi(G_{c}))$. Hence, we have 
\EQ d\left(\Phi_{\frac{1}{\omega} \hat{X}}(\tau,\tau_{0},g_{0}),\Phi_{\frac{1}{\omega} Z}^{(1,0)}\circ \Phi_{\frac{1}{\omega} \hat{X}}(\tau,\tau_{0},g_{0})\right)=O\left(\frac{1}{\omega}\right),\nnum\\\forall \tau\in[\tau_{0},\infty), g_{0}\in W_{\pi(G_{c})},\nnum\EN
where $g$ is replaced by $\Phi_{\frac{1}{\omega} \hat{X}}(\tau,\tau_{0},g_{0})\in G^{m}/G_{c}$.
Hence, by using (\ref{kirkir}), for any $g_{0}\in W_{\pi(G_{c})}$, there exists a time $T_{g_{0}}$, such that
\EQ &&d(\Phi_{\frac{1}{\omega} \hat{X}}(\tau,\tau_{0},g_{0}), \Phi_{\frac{1}{\omega} \hat{X}_{a}}(\tau,\tau_{0},g_{0}))\leq\nnum\\&& d(\Phi_{\frac{1}{\omega} \hat{X}}(\tau,\tau_{0},g_{0}),\Phi_{\frac{1}{\omega} Z}^{(1,0)}\circ \Phi_{\frac{1}{\omega} \hat{X}}(\tau,\tau_{0},g_{0}))+\nnum\\&& d(\Phi_{\frac{1}{\omega} Z}^{(1,0)}\circ \Phi_{\frac{1}{\omega} \hat{X}}(\tau,\tau_{0},g_{0}),\Phi_{\frac{1}{\omega} \hat{X}_{a}}(\tau,\tau_{0},g_{0}))\leq \nnum\\&&\nnum\\&&d(\Phi_{\frac{1}{\omega} \hat{X}}(\tau,\tau_{0},g_{0}),\Phi_{\frac{1}{\omega} Z}^{(1,0)}\circ \Phi_{\frac{1}{\omega} \hat{X}}(\tau,\tau_{0},g_{0}))+\nnum\\&&d(\Phi_{\frac{1}{\omega} Z}^{(1,0)}\circ \Phi_{\frac{1}{\omega} \hat{X}}(\tau,\tau_{0},g_{0}),\pi(G_{c}))+\nnum\\&&d(\pi(G_{c}),\Phi_{\frac{1}{\omega} \hat{X}_{a}}(\tau,\tau_{0},g_{0}))\leq O\left(\frac{1}{\omega}\right)+\nnum\\&& \rho(||T_{g^{m}}\pi\left(\sum^{m}_{j=1}\sum^{n}_{i=1}O((\max_{j\in1,\cdots,m, i\in1,\cdots,n}a^{j}_{i})^{4}) g_{j}\frac{\partial}{\partial g_{i}}\right)+\nnum\\&&\frac{1}{\omega}h(g,\zeta,\frac{\tau}{\omega})||_{g})+\nnum\\&&\hat{\rho}\left(||T_{g^{m}}\pi\left(\sum^{m}_{j=1}\sum^{n}_{i=1}O((\max_{j\in1,\cdots,m, i\in1,\cdots,n}a^{j}_{i})^{4}) g_{j}\frac{\partial}{\partial g_{i}}\right)||_{g}\right),\hspace{.2cm}\nnum\\&&\forall \tau\in [\omega T_{g_{0}},\infty), g_{0}\in W_{\pi(G_{c})},\nnum\EN
where $\hat{\rho}$ is derived by Lemma \ref{l5}. 
Note that $\Phi_{\frac{1}{\omega} Z}^{(1,0)}\circ \Phi_{\frac{1}{\omega} \hat{X}}(\tau,\tau_{0},g_{0})=\Phi_{\hat{X}+\frac{1}{\omega}h}(t,t_{0},g_{0})$, for $\tau=\omega t$ and $\tau_{0}=\omega t_{0}$.
Finally we have 
\EQ &&d(\Phi_{\frac{1}{\omega} \hat{X}}(\tau,\tau_{0},g_{0}),\pi(G_{c}))\leq \nnum\\&& d(\Phi_{\frac{1}{\omega} \hat{X}}(\tau,\tau_{0},g_{0}), \Phi_{\frac{1}{\omega} \hat{X}_{a}}(\tau,\tau_{0},g_{0}))+ \nnum\\&&d(\pi(G_{c}), \Phi_{\frac{1}{\omega} \hat{X}_{a}}(\tau,\tau_{0},g_{0}))\leq O\left(\frac{1}{\omega}\right)+\nnum\\&&\nnum\\&& \rho(||T_{g^{m}}\pi\left(\sum^{m}_{j=1}\sum^{n}_{i=1}O((\max_{j\in1,\cdots,m, i\in1,\cdots,n}a^{j}_{i})^{4}) g_{j}\frac{\partial}{\partial g_{i}}\right)+\nnum\\&&\frac{1}{\omega}h(g,\zeta,\frac{\tau}{\omega})||_{g})+\nnum\\&&2\hat{\rho}||T_{g^{m}}\pi\left(\sum^{m}_{j=1}\sum^{n}_{i=1}O((\max_{j\in1,\cdots,m, i\in1,\cdots,n}a^{j}_{i})^{4}) g_{j}\frac{\partial}{\partial g_{i}}\right)||_{g},\hspace{.2cm}\nnum\\&&\forall \tau\in [\omega T_{g_{0}},\infty),g_{0}\in W_{\pi(G_{c})}.\nnum\EN
 Following the proof of Lemma \ref{l5} we can show
 \EQ &&d\left(\Phi_{ X}(t,t_{0},g^{m}_{0}),G_{c}\right)\leq O\left(\frac{1}{\omega}\right)+\nnum\\&&\nnum\\&& \rho(||T_{g^{m}}\pi\left(\sum^{m}_{j=1}\sum^{n}_{i=1}O((\max_{j\in1,\cdots,m, i\in1,\cdots,n}a^{j}_{i})^{4}) g_{j}\frac{\partial}{\partial g_{i}}\right)+\nnum\\&&\frac{1}{\omega}h(g,\zeta,t)||_{g})+\nnum\\&&2\hat{\rho}||T_{g^{m}}\pi\left(\sum^{m}_{j=1}\sum^{n}_{i=1}O((\max_{j\in1,\cdots,m, i\in1,\cdots,n}a^{j}_{i})^{4}) g_{j}\frac{\partial}{\partial g_{i}}\right)||_{g},\hspace{.2cm}\nnum\\&&\forall t\in [T_{g_{0}},\infty),\pi(g^{m}_{0})\in W_{\pi(G_{c})},\nnum\EN
 which gives the closeness of solutions on $G^{m}$ and completes the proof for $\hat{U}_{\pi(G_{c})}= W_{\pi(G_{c})}$.
  \end{proof}
  \section{Example on $SO(3)$}
  In this section we present a simple example for synchronization of three agents evolving on $SO(3)$ as their ambient state manifold. For this problem the synchronization cost is given by
  \EQ\label{cc} &&J:SO(3)\times SO(3)\times SO(3)\rightarrow \mathds{R},\nnum\\&& J(g_{1},g_{2},g_{3})=\frac{1}{2}tr\left((g_{1}-g_{2})^{T}\cdot (g_{1}-g_{2})\right)+\nnum\\&&\frac{1}{2}tr\left((g_{1}-g_{3})^{T}\cdot (g_{1}-g_{3})\right)+\nnum\\&&\frac{1}{2}tr\left((g_{2}-g_{3})^{T}\cdot (g_{2}-g_{3})\right),\hspace{.2cm}g_{1},g_{2},g_{3}\in SO(3).\nnum\\\EN
  The invariant synchronization set is given by $G_{c}=(g_{c},g_{c},g_{c})\in SO^{3}(3)$, where one can verify that $J(g_{c}\star g_{1},g_{c}\star g_{2},g_{c}\star g_{3})=J(g_{1},g_{2},g_{3})$ for all $g_{c}\in SO(3)$. Hence, $J$ is $G_{c}$ invariant. 
  The Lie algebra $so(3)$ is spanned by $\frac{\partial}{\partial g_{1}}=\left( \begin{array}{ll} 0 \quad\hspace{.3cm} 1 \quad 0\\-1\quad 0\quad 0\\0\quad\hspace{.3cm} 0\quad 0
        \end{array}\right), \frac{\partial}{\partial g_{2}}=\left( \begin{array}{ll} 0 \quad\hspace{.3cm} 0 \quad 0\\0\quad\hspace{.3cm} 0\quad 1\\0\quad -1\quad 0
        \end{array}\right) \mbox{and}\hspace{.2cm} \frac{\partial}{\partial g_{3}}=\left( \begin{array}{ll} 0 \quad\hspace{.3cm} 0 \quad 1\\0\quad\hspace{.3cm} 0\quad 0\\-1\quad 0\quad 0
        \end{array}\right)$. For this example the dither vector $X(e)$ at the Lie algebra $so(3)$ is given by
				\EQ X(e)&&=\sum^{3}_{i=1}a_{i}\sin(\omega_{i}t)\frac{\partial}{\partial g_{i}}\nnum\\&&=\left( \begin{array}{ll} \hspace{1cm}0 \quad\hspace{.3cm} a_{1}\sin(\omega_{1}t) \quad a_{3}\sin(\omega_{3}t)\\-a_{1}\sin(\omega_{1}t)\quad \hspace{.5cm}0\quad  \hspace{.5cm}a_{2}\sin(\omega_{2}t)\\-a_{3}\sin(\omega_{3}t)\quad\hspace{-.3cm} -a_{2}\sin(\omega_{2}t)\quad 0
        \end{array}\right),\nnum\\\EN
				hence, the dither vector field is given by
				\EQ X(g)=g\cdot \left( \begin{array}{ll} \hspace{1cm}0 \quad\hspace{.3cm} a_{1}\sin(\omega_{1}t) \quad a_{3}\sin(\omega_{3}t)\\-a_{1}\sin(\omega_{1}t)\quad \hspace{.5cm}0\quad  \hspace{.5cm}a_{2}\sin(\omega_{2}t)\\-a_{3}\sin(\omega_{3}t)\quad\hspace{-.3cm} -a_{2}\sin(\omega_{2}t)\quad 0
        \end{array}\right),\nnum\EN
				where $g\in SO(3)$.
				
				\begin{figure}
\begin{center}
\vspace{0cm}
\hspace*{0cm}
\includegraphics[scale=.3]{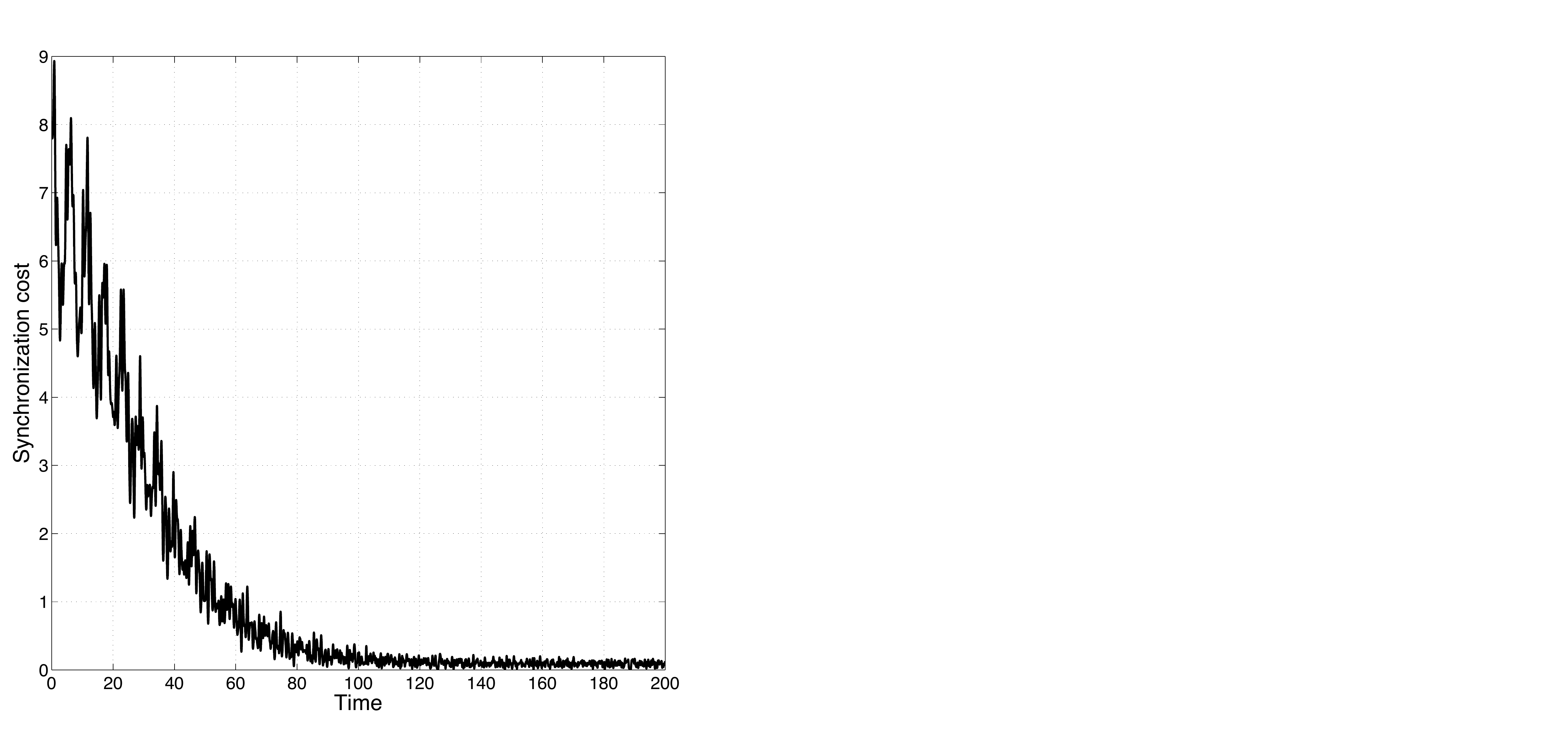}
 \caption{  Convergence of the synchronization cost function }
      \label{e1}
      \end{center}
   \end{figure} 
\begin{figure}
\begin{center}
\vspace{0cm}
\hspace*{0cm}
\includegraphics[scale=.3]{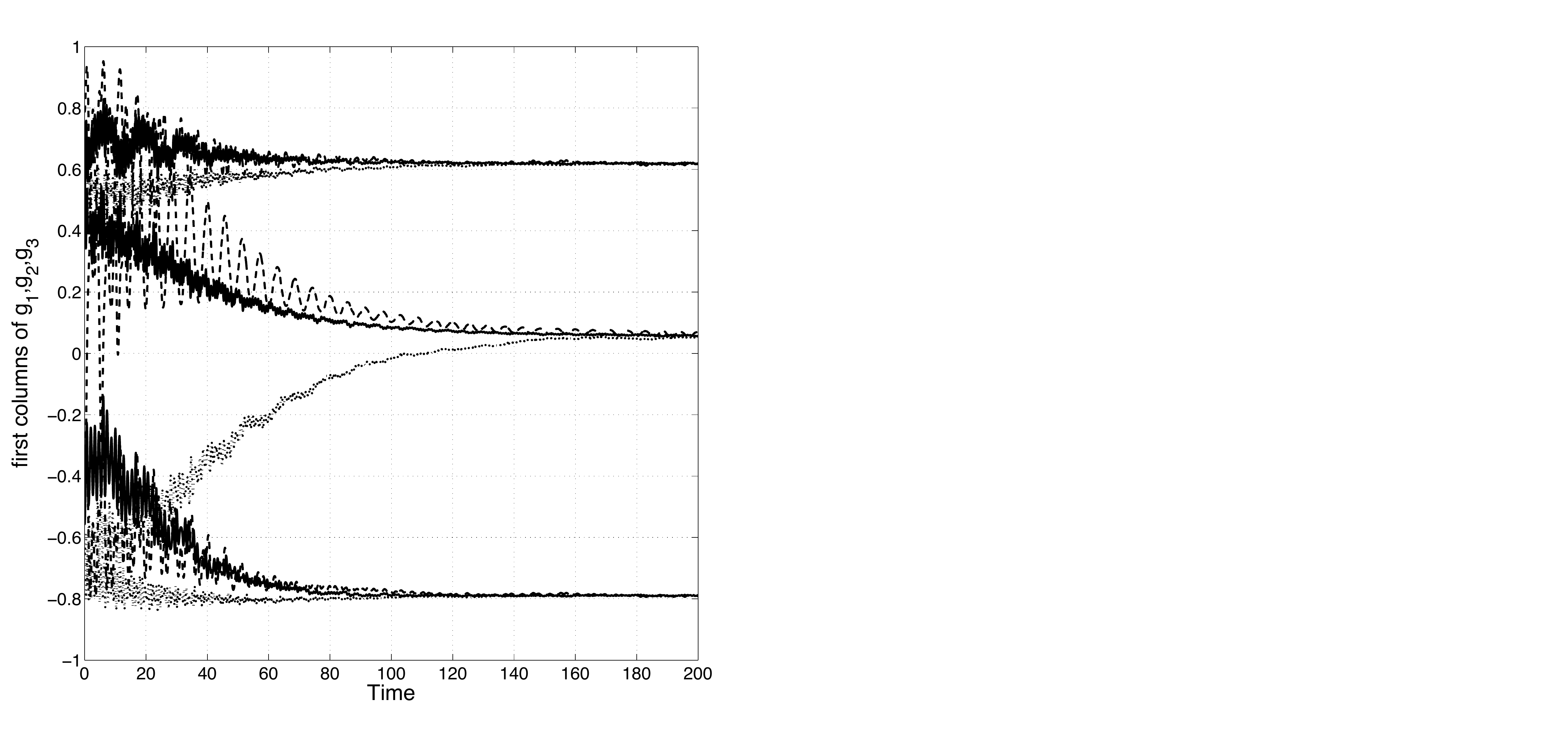}
 \caption{  Convergence of the first column of $g_{1},g_{2},g_{3}$}
      \label{e2}
      \end{center}
   \end{figure} 

\begin{figure}
\begin{center}
\vspace{0cm}
\hspace*{0cm}
\includegraphics[scale=.3]{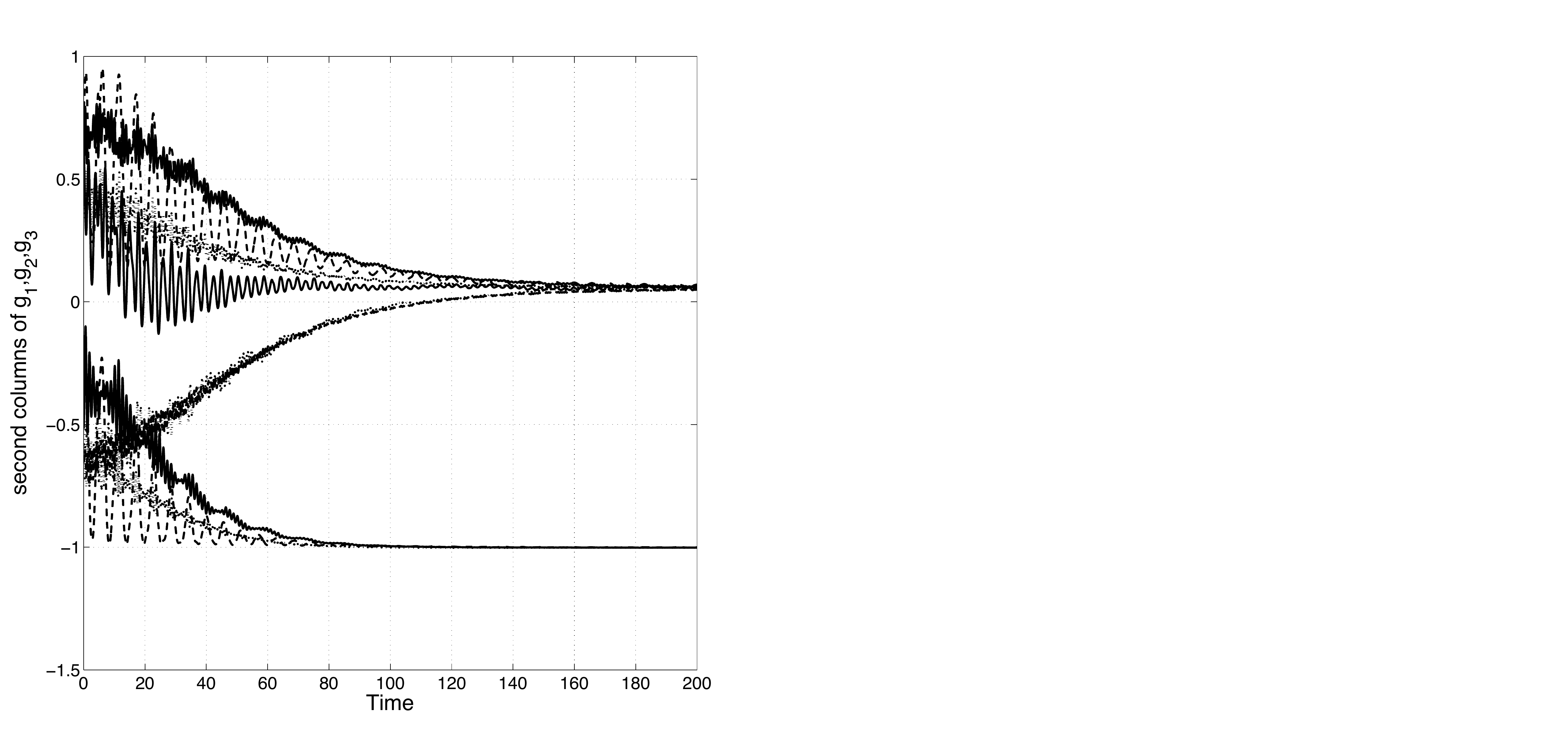}
 \caption{ Convergence of the second column of $g_{1},g_{2},g_{3}$}
      \label{e3}
      \end{center}
   \end{figure} 
   \begin{figure}
\begin{center}
\vspace{0cm}
\hspace*{0cm}
\includegraphics[scale=.3]{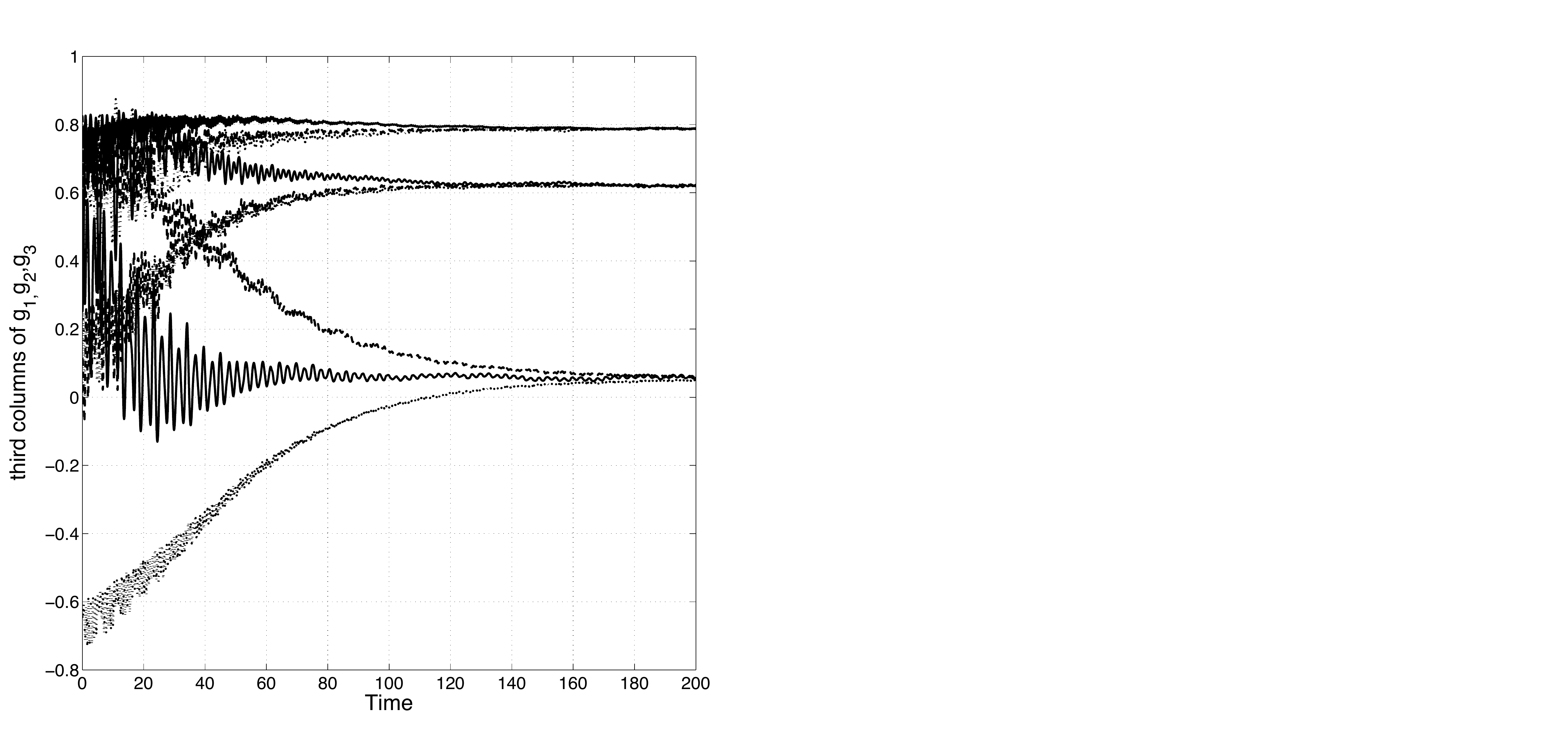}
 \caption{ Convergence of the third column of $g_{1},g_{2},g_{3}$}
      \label{e4}
      \end{center}
   \end{figure} 
The extremum seeking for synchronization of the agents in this example is given by
\EQ\label{po} &&\hspace{-1cm}\dot{g}_{j}=-\sum^{3}_{i=1}a^{j}_{i}\sin(\omega_{i}t)\times J\Big(g_{1}\cdot\exp(\sum^{3}_{i=1}a^{1}_{i}\sin(\omega^{1}_{i}t)\frac{\partial}{\partial g_{i}}),\nnum\\&&\hspace{-1cm}g_{2}\cdot\exp(\sum^{3}_{i=1}a^{2}_{i}\sin(\omega^{2}_{i}t)\frac{\partial}{\partial g_{i}}),\nnum\\&&\hspace{-1cm}g_{3}\cdot\exp(\sum^{3}_{i=1}a^{3}_{i}\sin(\omega^{3}_{i}t)\frac{\partial}{\partial g_{i}})\Big)g_{j}\frac{\partial}{\partial g_{i}},j=1,2,3.\EN

The initial configuration of agents are given by $g_{1}=\left( \begin{array}{ll} -0.3766\quad   -0.8917\quad    0.2509\\0.7877\quad   -0.1658\quad    0.5934\\-0.4875\quad    0.4211\quad    0.7648\end{array}\right), g_{2}=\left( \begin{array}{ll} -0.5569\quad    0.8229\quad    0.1122\\ -0.6528\quad   -0.5173\quad    0.5534\\0.5134\quad    0.2350\quad    0.8253\end{array}\right), g_{3}=\left( \begin{array}{ll} -0.6536\quad   -0.7568\quad         0\\  0.5788\quad   -0.4999\quad   -0.6442\\0.4875   -0.4211\quad    0.7648\end{array}\right)$. Figure \ref{e1} shows the convergence of the synchronization algorithm in terms of minimizing (\ref{cc}) for a proper set of frequencies  $\omega^{j}_{i},i,j=1,2,3$. Figures \ref{e2}-\ref{e4} show the synchronization of $g_{1},g_{2},g_{3}$ on $SO^{3}(3)$.

\section{Example on $SE(3)$}
\label{s5}
	In this section we give another conceptual example  for an orientation control on $SE(3)$. 

As is known, $SE(3)$ is the space of rotation and translation which is used for robotic modeling. We have
\EQ SE(3)= \big \{\left( \begin{array}{ll} g^{SO(3)} \hspace{.3cm} g^{\mathds{R}} \\ 0_{1\times 3}\hspace{.6cm}  1
        \end{array}\right)\in \mathds{R}^{4\times 4}|\nnum\\ \quad g^{SO(3)}\in SO(3), g^{\mathds{R}}\in \mathds{R}^{3\times 1}\big\},\nnum\EN
				where $g^{SO(3)}$ models the rotation and $g^{\mathds{R}}$ models the translation in $\mathds{R}^{3}$.
The Lie algebra of $SE(3)$ which is denoted by $se(3)$ is given by
 \EQ se(3)=\big\{  \left( \begin{array}{ll} S \hspace{.7cm} v \\ 0_{1\times 3}\hspace{.2cm}  0
        \end{array}\right)\in \mathds{R}^{4\times 4}|\quad S\in so(3), v\in \mathds{R}^{3}\big\},\nnum\EN

Let us consider the  synchronization cost function for three agents as $J:SE^{3}(3)\rightarrow \mathds{R}$, which is given by
\EQ \label{cost2}J(g)&=&\frac{1}{2}tr\left((g^{SO(3)}_{1}-g^{SO(3)}_{2})^{T}\cdot (g^{SO(3)}_{1}-g^{SO(3)}_{2})\right)+\nnum\\&&\frac{1}{2}tr\left((g^{SO(3)}_{1}-g^{SO(3)}_{3})^{T}\cdot (g^{SO(3)}_{1}-g^{SO(3)}_{3})\right)+\nnum\\&&\frac{1}{2}tr\left((g^{SO(3)}_{2}-g^{SO(3)}_{3})^{T}\cdot (g^{SO(3)}_{2}-g^{SO(3)}_{3})\right)+\nnum\\&&\frac{1}{2}||g^{\mathds{R}}_{1}-g^{\mathds{R}}_{2}||_{\mathds{R}^{3}}^{2}+\frac{1}{2}||g^{\mathds{R}}_{1}-g^{\mathds{R}}_{3}||_{\mathds{R}^{3}}^{2}+\nnum\\&&\frac{1}{2}||g^{\mathds{R}}_{2}-g^{\mathds{R}}_{3}||_{\mathds{R}^{3}}^{2}.\EN

The synchronization set for this problem is given by $G_{c}=(g_{c},g_{c},g_{c})\in SE^{3}(3)$. One can verify that $G_{c}$ is invariant for the 
cost function(\ref{cost2}). Since the group operation on $SE(3)$ is given by matrix multiplication then we have 
\EQ g_{c}\cdot g_{j}&=&\left( \begin{array}{ll} g^{SO(3)}_{c} \hspace{.3cm} g^{\mathds{R}}_{c} \\ 0_{1\times 3}\hspace{.6cm}  1
        \end{array}\right)\cdot \left( \begin{array}{ll} g^{SO(3)}_{j} \hspace{.3cm} g^{\mathds{R}}_{j} \\ 0_{1\times 3}\hspace{.6cm}  1
        \end{array}\right)\nnum\\&=&\left( \begin{array}{ll} g^{SO(3)}_{c}.g^{SO(3)}_{j} \hspace{.3cm} g^{SO(3)}_{c}\cdot g^{\mathds{R}}_{j}+g^{\mathds{R}}_{c}\\ 0_{1\times 3}\hspace{2.6cm}  1
        \end{array}\right),\nnum\\&&j=1,2,3.\nnum\EN
        It is immediate that the rotation terms in (\ref{cost2}) are invariant with respect to $SO(3)$. Also the displacement terms are given by $\frac{1}{2}||g^{SO(3)}_{c}\cdot g^{\mathds{R}}_{i}+g^{\mathds{R}}_{c}-g^{SO(3)}_{c}\cdot g^{\mathds{R}}_{j}-g^{\mathds{R}}_{c}||_{\mathds{R}^{3}}^{2}=\frac{1}{2}||g^{\mathds{R}}_{i}-g^{\mathds{R}}_{j}||_{\mathds{R}^{3}}^{2}$. Hence, (\ref{cost2}) is $G_{c}$ invariant.

The Lie algebra $se(3)$ is spanned by $\frac{\partial}{\partial g_{1}}=\left( \begin{array}{ll} 0 \quad 1 \quad 0 \quad 0\\\hspace{-.25cm}-1\quad 0\quad 0\quad 0\\0\quad 0\quad 0\quad 0\\0\quad 0\quad 0\quad0
        \end{array}\right), \frac{\partial}{\partial g_{2}}=\left( \begin{array}{ll} 0 \quad 0 \quad 1 \quad 0\\0\quad 0\quad 0\quad 0\\\hspace{-.25cm}-1\quad 0\quad 0\quad 0\\0\quad 0\quad 0\quad0
        \end{array}\right),\frac{\partial}{\partial g_{3}}=\left( \begin{array}{ll} 0 \quad 0 \quad 0 \quad 0\\0\quad 0\quad 1\quad 0\\0\quad \hspace{-.3cm}-1\quad 0\quad 0\\0\quad 0\quad 0\quad0
        \end{array}\right),\frac{\partial}{\partial g_{4}}=\left( \begin{array}{ll} 0 \quad 0 \quad 0 \quad 1\\0\quad 0\quad 0\quad 0\\0\quad 0\quad 0\quad 0\\0\quad 0\quad 0\quad0
        \end{array}\right), \frac{\partial}{\partial g_{5}}=\left( \begin{array}{ll} 0 \quad 0 \quad 0 \quad 0\\0\quad 0\quad 0\quad 1\\0\quad 0\quad 0\quad 0\\0\quad 0\quad 0\quad0
        \end{array}\right) \mbox{and}\hspace{.2cm} \frac{\partial}{\partial g_{6}}=\left( \begin{array}{ll} 0 \quad 0 \quad 0 \quad 0\\0\quad 0\quad 0\quad 0\\0\quad 0\quad 0\quad 1\\0\quad 0\quad 0\quad0
        \end{array}\right)$. For this example the dither vector $X(e)$ at the Lie algebra $se(3)$ is given by
				\EQ X(e)&&=\sum^{6}_{i=1}a_{i}\sin(\omega_{i}t)\frac{\partial}{\partial g_{i}}\nnum\\&&\hspace{-2cm}=\left( \begin{array}{ll} \hspace{1cm}0 \quad\hspace{.3cm} a_{1}\sin(\omega_{1}t) \quad a_{3}\sin(\omega_{3}t)\quad a_{4}\sin(\omega_{4}t)\\-a_{1}\sin(\omega_{1}t)\quad \hspace{.5cm}0\quad  \hspace{.5cm}a_{2}\sin(\omega_{2}t)\quad a_{5}\sin(\omega_{5}t)\\-a_{3}\sin(\omega_{3}t)\quad\hspace{-.3cm} -a_{2}\sin(\omega_{2}t)\quad 0\quad\hspace{.9cm} a_{6}\sin(\omega_{6}t)\\\hspace{1cm}0\quad\hspace{1.5cm} 0\quad\hspace{1cm} 0\quad \hspace{1.5cm}0
        \end{array}\right),\nnum\EN
				hence, the dither vector field is given by $X(g)=g\cdot X(e),$
				where $g\in SE(3)$.

Similar to the example on $SO(3)$, the extremum seeking vector field on $SE(3)$ is given by the following vector field
\EQ\label{ll} &&\hspace{-1cm}-\sum^{6}_{i=1}a_{i}\sin(\omega_{i}t)J(\cdot,g_{j}\exp\sum^{6}_{i=1}a_{i}\sin(\omega_{i}t)\frac{\partial}{\partial g_{i}},\cdot)g_{j}\frac{\partial}{\partial g_{i}},\nnum\\\EN
where $\exp$ is the exponential operator defined on $SE(3)$. In this case, the $\exp$ operator is not the same as the $\exp$ operator on $SO(3)$. For a tangent vector $\left( \begin{array}{ll} S \hspace{.7cm} v \\ 0_{1\times 3}\hspace{.2cm}  0\end{array}\right)\in se(3)$, where $S=\left( \begin{array}{ll}\hspace{.25cm} 0 \quad a \quad b \\-a\quad 0\quad c\\-b\quad \hspace{-.25cm}-c\quad 0\end{array}\right)$, we have
$ \exp(\left( \begin{array}{ll} S \hspace{.7cm} v \\ 0_{1\times 3}\hspace{.2cm}  0\end{array}\right))=\left( \begin{array}{ll} \exp(S) \hspace{.1cm} Av \\ 0_{1\times 3}\hspace{.6cm}  1\end{array}\right),$
where $A=I_{3\times 3}+\frac{(1-\cos(\theta))}{\theta^{2}}S+\frac{(\theta-\sin(\theta))}{\theta^{3}}S^{2}$, and $\theta=\sqrt{a^{2}+b^{2}+c^{2}}$. In the case that $\theta=0$, we have $\exp(\left( \begin{array}{ll} S \hspace{.7cm} v \\ 0_{1\times 3}\hspace{.2cm}  0\end{array}\right))=\left( \begin{array}{ll} \exp(S) \hspace{.2cm} v \\ 0_{1\times 3}\hspace{.6cm}  1\end{array}\right)$.

	The extremum seeking for synchronization of the agents in this example is given by
\EQ\label{po} &&\hspace{-1cm}\dot{g}_{j}=-\sum^{6}_{i=1}a^{j}_{i}\sin(\omega_{i}t)\times J\Big(g_{1}\cdot\exp(\sum^{6}_{i=1}a^{1}_{i}\sin(\omega^{1}_{i}t)\frac{\partial}{\partial g_{i}}),\nnum\\&&\hspace{-1cm}g_{2}\cdot\exp(\sum^{6}_{i=1}a^{2}_{i}\sin(\omega^{2}_{i}t)\frac{\partial}{\partial g_{i}}),\nnum\\&&\hspace{-1cm}g_{3}\cdot\exp(\sum^{3}_{i=1}a^{3}_{i}\sin(\omega^{3}_{i}t)\frac{\partial}{\partial g_{i}})\Big)g_{j}\frac{\partial}{\partial g_{i}},j=1,2,3.\EN
The initial configuration of agents are given by $g_{1}=\left( \begin{array}{ll} -0.3766\quad   -0.8917\quad    0.2509\quad    5\\0.7877\quad   -0.1658\quad    0.5934\quad\quad    1\\-0.4875\quad    0.4211\quad    0.7648\quad\quad    1\\ 0\qquad\qquad         0\qquad\qquad          0\qquad\qquad    1\end{array}\right), g_{2}=\left( \begin{array}{ll} -0.5165\quad   -0.8489\quad    0.1122\quad    4\\  0.7420\quad   -0.3784\quad    0.5534\quad\quad    2\\-0.4273\quad    0.3691\quad    0.8253\quad \quad   1\\0\qquad\qquad         0\qquad\qquad          0\qquad\qquad    1\end{array}\right), g_{3}=\left( \begin{array}{ll} -0.2961\quad   -0.9038\quad    0.3089\quad    5\\  0.8213\quad   -0.0759\quad    0.5654\quad \quad   2\\-0.4875\quad    0.4211\quad    0.7648\quad \quad   0\\0\qquad\qquad         0\qquad\qquad          0\qquad\qquad    1\end{array}\right)$. Figure \ref{e1} shows the convergence of the synchronization algorithm in terms of minimizing (\ref{cc}) for a proper set of frequencies  $\omega^{j}_{i},j=1,2,3, i=1,\cdots,6$. Figures \ref{e5}-\ref{e9} show the synchronization of $g_{1},g_{2},g_{3}$ on $SE^{3}(3)$.

\begin{figure}
\begin{center}
\vspace{0cm}
\hspace*{0cm}
\includegraphics[scale=.3]{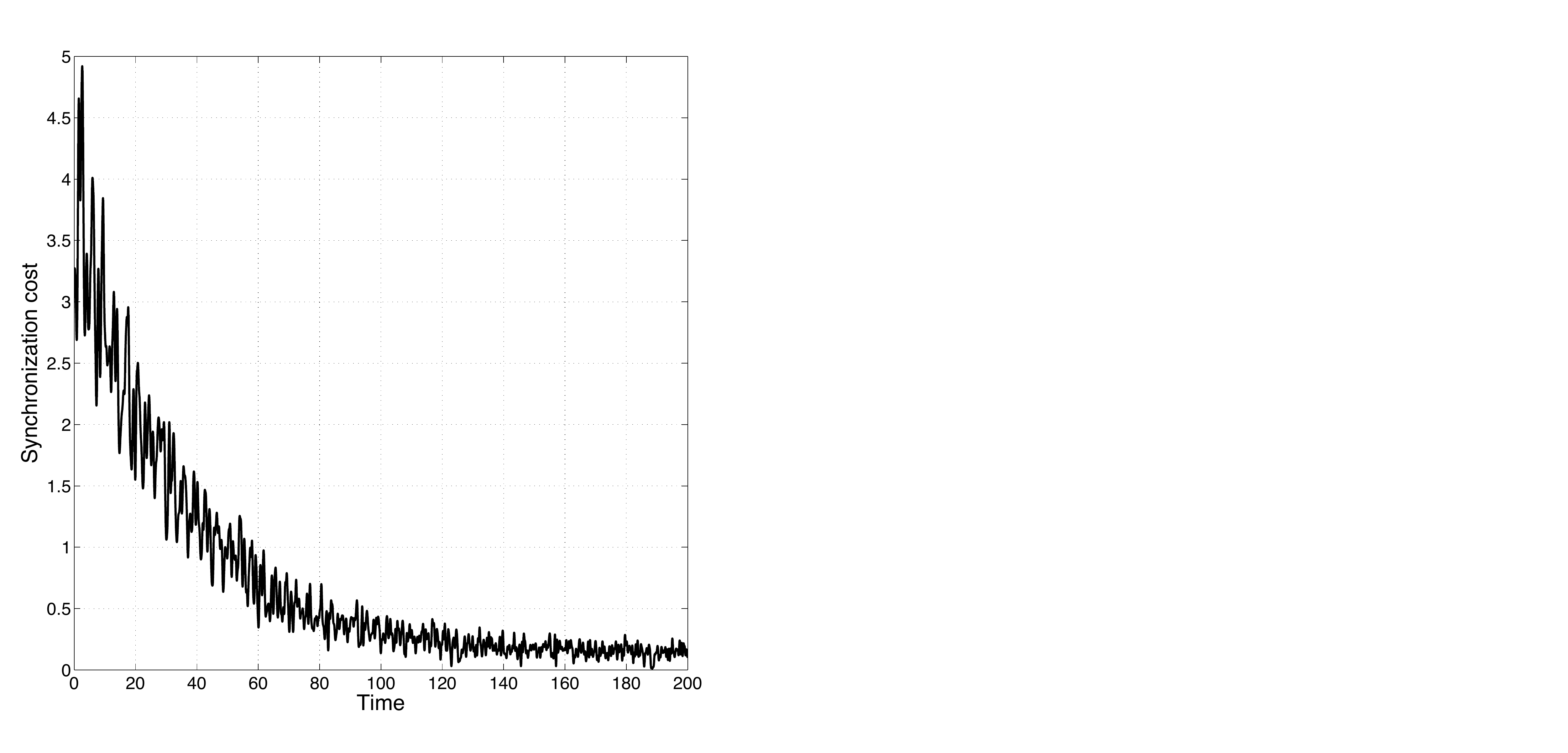}
 \caption{ Convergence of the synchronization cost on $SE(3)$}
      \label{e5}
      \end{center}
   \end{figure} 
	\begin{figure}
\begin{center}
\vspace{0cm}
\hspace*{0cm}
\includegraphics[scale=.3]{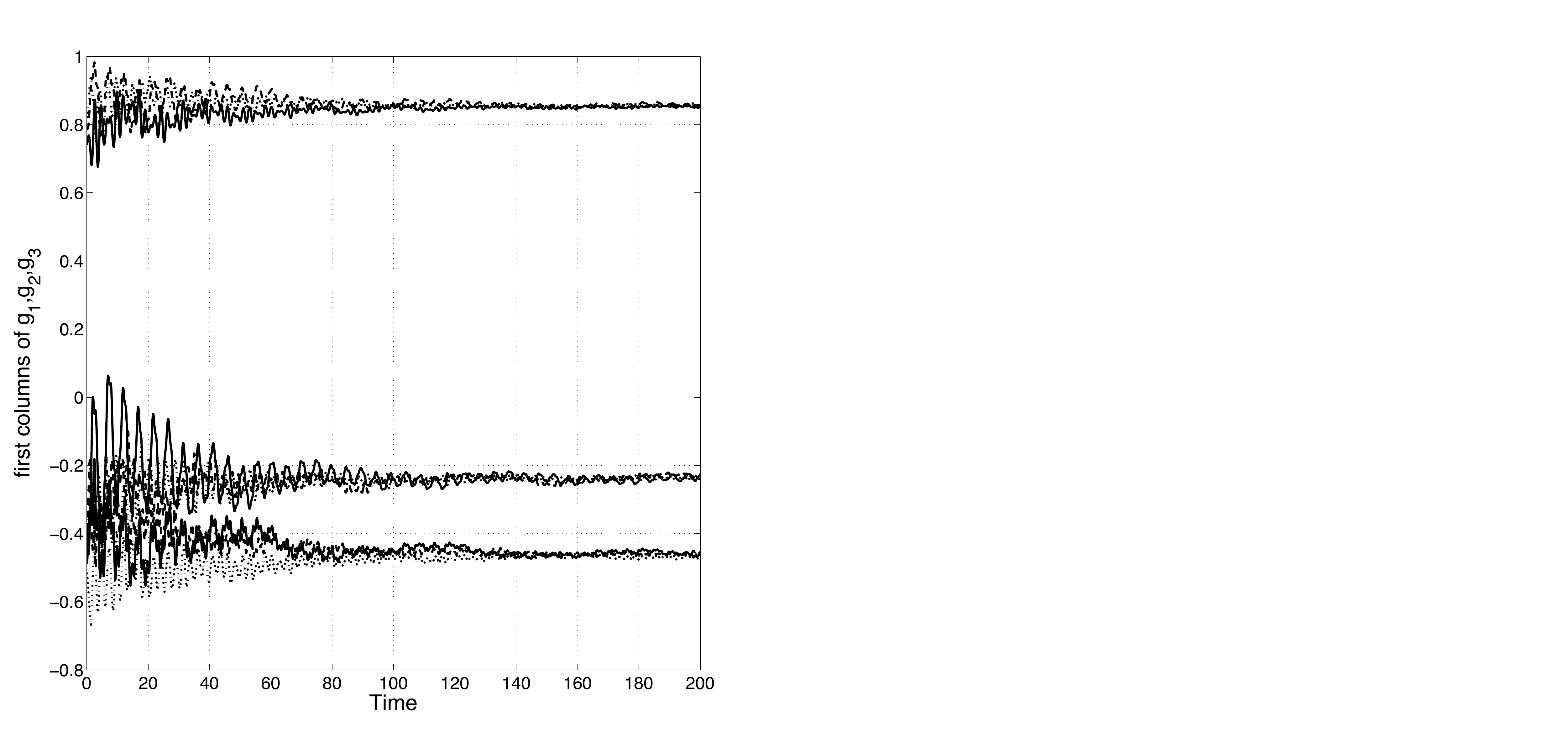}
 \caption{ Convergence of the first columns of  $g_{1},g_{2},g_{3}$}
      \label{e6}
      \end{center}
   \end{figure} 
	\begin{figure}
\begin{center}
\vspace{0cm}
\hspace*{0cm}
\includegraphics[scale=.3]{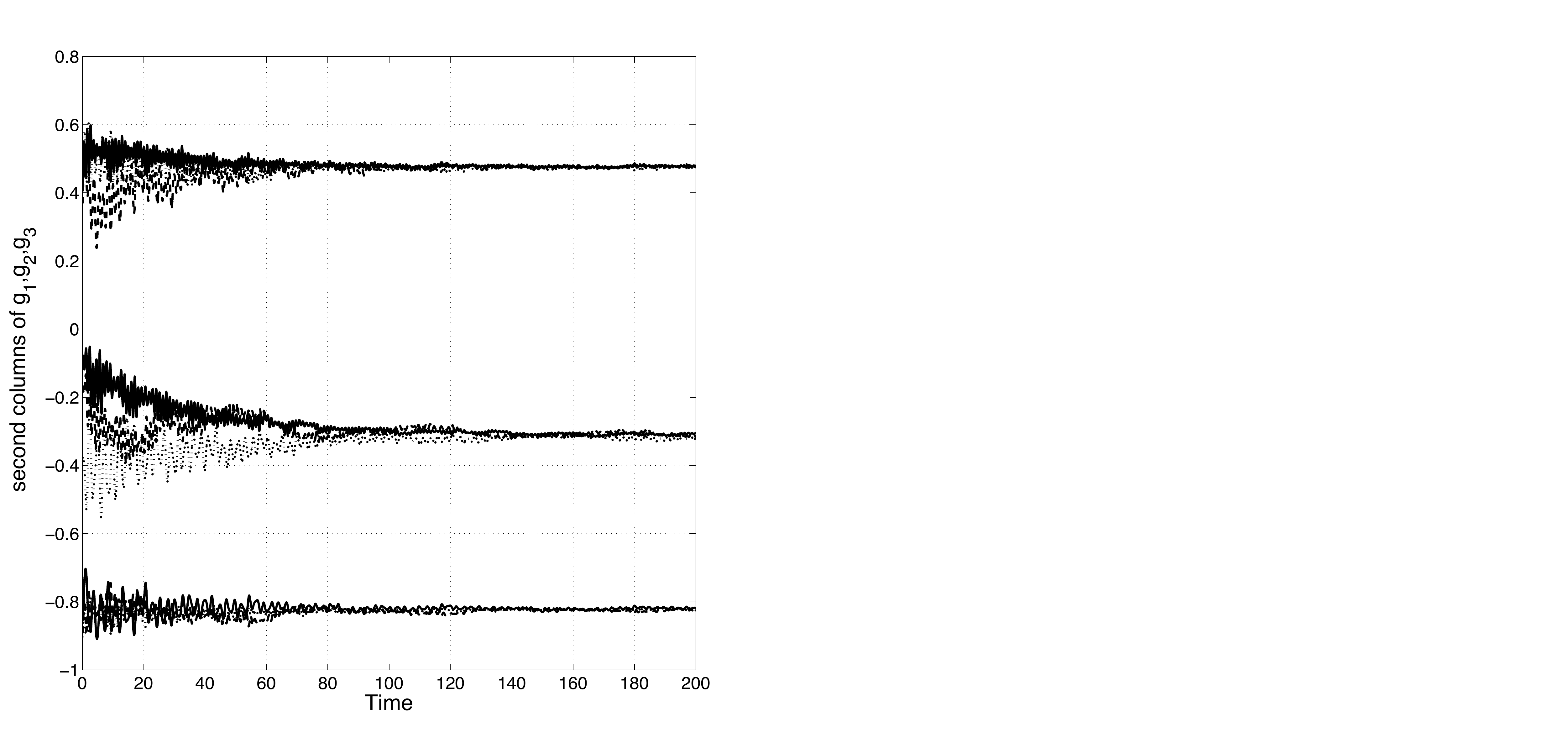}
 \caption{ Convergence of the second columns of  $g_{1},g_{2},g_{3}$}
      \label{e7}
      \end{center}
   \end{figure} 
	\begin{figure}
\begin{center}
\vspace{0cm}
\hspace*{0cm}
\includegraphics[scale=.3]{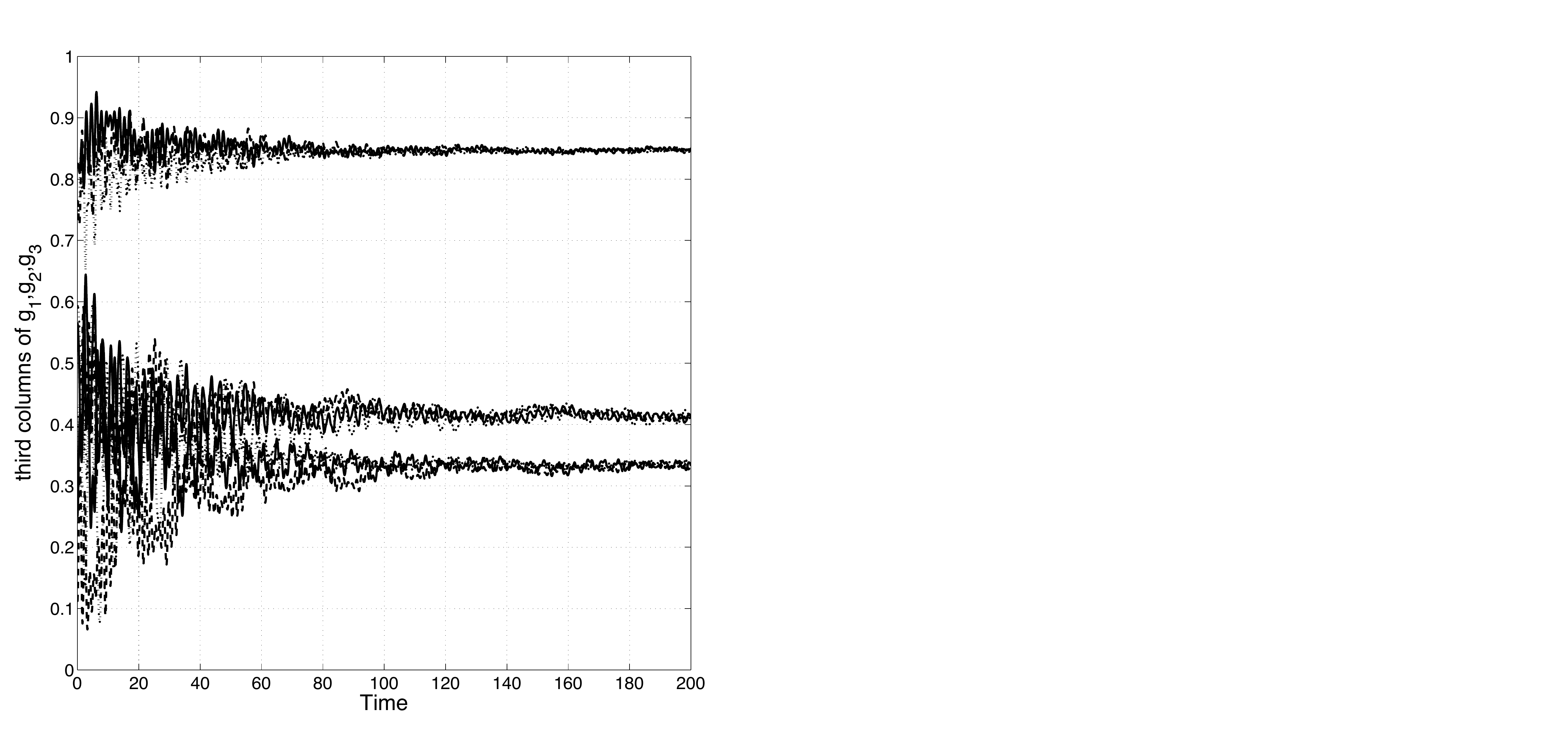}
 \caption{ Convergence of the third columns of  $g_{1},g_{2},g_{3}$}
      \label{e8}
      \end{center}
   \end{figure} 
	\begin{figure}
\begin{center}
\vspace{0cm}
\hspace*{0cm}
\includegraphics[scale=.3]{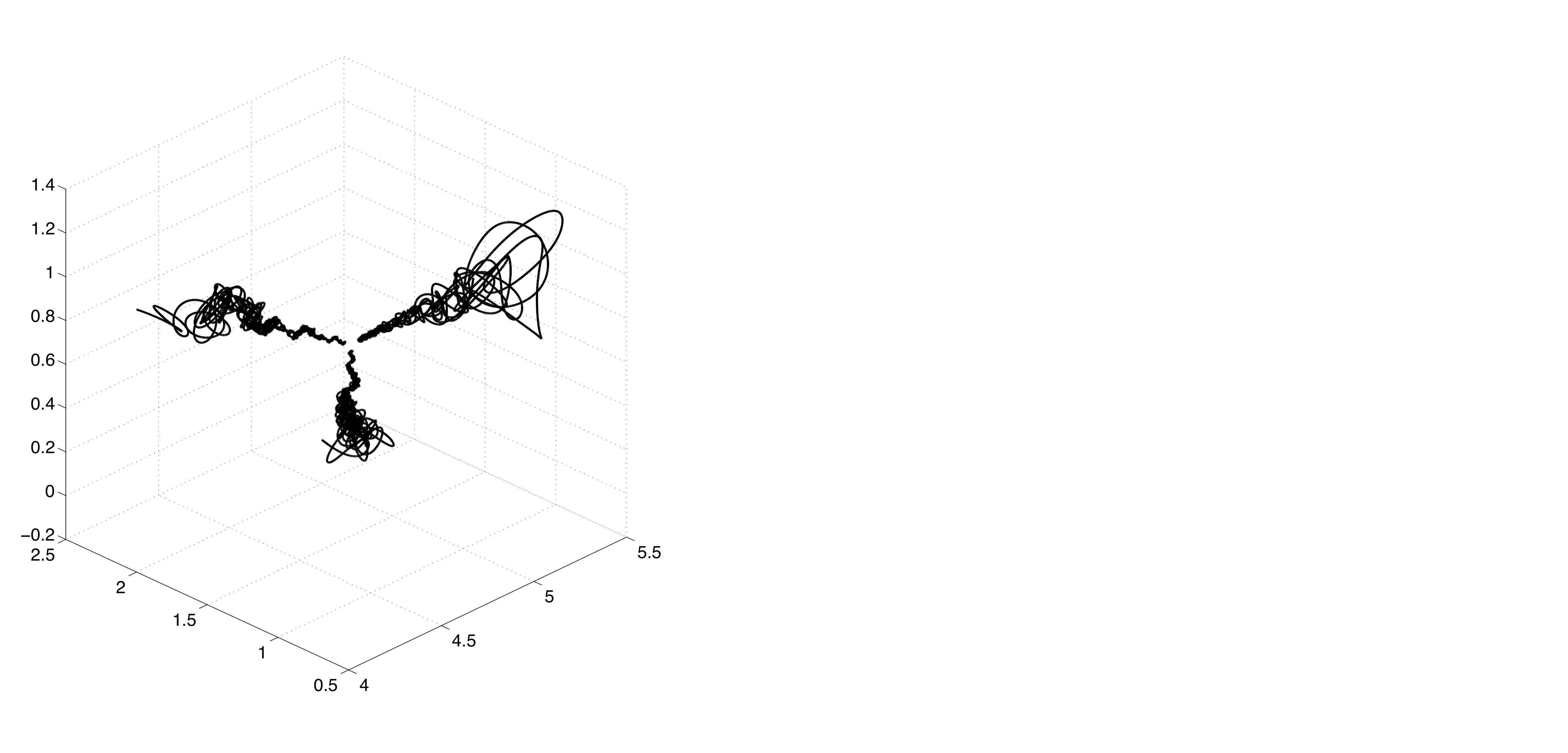}
 \caption{ Synchronization  of $g_{1},g_{2},g_{3}$ in $\mathds{R}^{3}$}
      \label{e9}
      \end{center}
   \end{figure} 
\bibliographystyle{ieeetr}
\bibliography{HSCC1}\end{document}